\documentclass[twocolumn]{autart}
\usepackage{stfloats} 
\usepackage{amsmath,amssymb,amsfonts}
\usepackage{graphicx}
\usepackage{mathptmx} 
\usepackage{epsfig,epstopdf}
\usepackage{bbm}
\usepackage{bm}
\usepackage{mathrsfs}
\usepackage{color}
\usepackage{textcomp}
\usepackage{comment}
\usepackage{varwidth}
\usepackage{tikz}
\usetikzlibrary{arrows,positioning}
\usetikzlibrary{calc}
\tikzset{>=latex}
\usepackage{subfigure}
\usepackage{algorithm,algorithmic}
\usepackage{cite}
\usepackage{subcaption} 

\usepackage[titletoc,title]{appendix}

\newtheorem{assumption}{Assumption}
\newenvironment{proof}[1][Proof]{\noindent\textbf{#1.} }{\ \rule{0.5em}{0.5em}}

\usepackage{hyperref}
\hypersetup{hidelinks}

\begin{document}

        \setlength\abovedisplayskip{15pt}
       \setlength\belowdisplayskip{15pt}
        \setlength\abovedisplayshortskip{15pt}
        \setlength\belowdisplayshortskip{15pt}
        \allowdisplaybreaks
        \setlength{\parindent}{1em}
        \setlength{\parskip}{-0em}  
        \addtolength{\oddsidemargin}{15pt}      
        
        \begin{frontmatter}
                
\title{Multi-layer Predictor Feedback Design for Nonlinear Integro-Differential Equations with State-dependent  Input Delays} 
                
                \thanks[footnoteinfo]{Corresponding author: Mamadou Diagne. 
                }
                
                \author[Tong]{Tong Li}\ead{2011289@tongji.com},  
                \author[Tong]{Peipei Shang}\ead{shang@tongji.edu.cn},
                \author[Mamadou]{Mamadou Diagne*}\ead{mdiagne@ucsd.edu} 
                
                \address[Tong]{School of Mathematical Sciences, Tongji University, Shanghai 200092, China}  
                \address[Mamadou]{Department of Mechanical and Aerospace Engineering, University of California San Diego, La Jolla, CA, USA, 92093} 
                
                \begin{keyword}
                    Nonlinear predictor feedback; state-dependent delay; hyperbolic PDE-ODE systems; PDE backstepping. 
                \end{keyword}
                

\begin{abstract} 
We develop a novel  multi-layer predictor-feedback to achieve exact compensation of  state-dependent input delay of general nonlinear integro-differential equations. The system of interest is an unconventional \emph{mixed} Partial Differential Equation (PDE)-Ordinary Differential Equation (ODE) system, in which a nonlinear ODE is actuated through an \emph{inhomogeneous} advection PDE. \textcolor{black}{Moreover, the propagation speed of the PDE depends on a moving window integral of the ODE state. The two above features are not addressed yet in standard PDE backstepping-based predictor-feedback designs. } \textcolor{black}{We first address a source term corresponding to a linear recirculation loop. The framework is subsequently generalized to the nonlinear case with the addition of a friction term. \textcolor{black}{In addition in the latter case, we use inflow boundary control with outflow anti-collocated boundary measurements.} In both cases,} we show that the inhomogeneous and \emph{doubly nonlinear} mixed PDE-ODE system results in a correspondingly \emph{``inhomogeneous"} control law. The stabilizing controller comprises a nominal delay-free control law based on the predictor-feedback state, enhanced by a multi-layer prediction mechanism that compensates for the PDE’s nonlinear additive source terms. Our modular designs are based on two novel  nonlinear backstepping transformations: the first enables boundary control of the PDE state, while the second addresses flux control, a nonlinear boundary condition defined as  the product of the ODE and PDE states at the controlled boundary. Unlike the conventional Lyapunov-based approaches  used in the field, our stability and well-posedness analysis rely on  the characteristic method and a fixed-point argument. Both of our designs achieve global asymptotic stability (GAS) in the supremum norm of the PDE and ODE states under the mild assumption that the nonlinearity in the PDE governing equation is uniformly Lipschitz continuous. The transport speed, governed by the integral of the ODE state, models systems such as production or queuing processes in which the state of a finite buffer--namely, the inventory level--determines the production or service rate. \textcolor{black}{Numerical simulations demonstrate the effectiveness of the proposed control design for buffer-regulated production lines and queuing systems, ensuring asymptotic stability under a locally safe softened ``bang–bang" feedback law that preserves the positivity of both the system state and the actuation signal.}
\end{abstract}

      \end{frontmatter}

\section{Introduction}\label{sec:introduction}

\subsection{State of the art of predictor feedback design} The Smith Predictor, introduced in the late 1950s for stable Linear Time-Invariant (LTI) systems, represents a foundational development in compensator design for systems with dead time. In \cite{smith1957closer,smith1959controller}, the concept of prediction is introduced through the use of a delay-free model: the state of the system in the absence of delay is used to estimate its future values and thereby effectively compensate for constant time-delays. An extension of this approach to unstable plants via Finite Spectrum Assignment (FSA) was proposed in \cite{manitius1979finite}. Later, Artstein \cite{artstein1982linear} generalized Smith’s predictor by introducing a predictor-based transformation that reformulates linear or nonlinear systems with constant input delays into equivalent delay-free systems. This transformation facilitates the use of standard state-feedback design methods, even when the open-loop time-delay system is unstable.

Since around 2000, building on the foundational work of Smith and Artstein, PDE backstepping predictor-based control \cite{krstic2008compensating,krstic2010compensation,krstic2009input}, which comes with a stability proof fundamentally rooted in Lyapunov's arguments,
has seen quite interesting progress. The method has driven advances in predictor feedback design for finite-dimensional systems with input delays--ranging from time-varying 
\cite{krstic2010lyapunov,bekiaris2011compensation}, state-dependent \cite{bekiaris2012compensation,Krstic2017X,yu2019bilateral}, and input-history-dependent delays \cite{diagne2017compensation}, to stochastic \cite{kongprediction}, uncertain \cite{bekiaris2010lyapunov}, and distributed  \cite{bekiaris2010lyapunov}. Safety-critical predictor feedback design has been reported in \cite{abel2021safety, abel2019constrained}, while prescribed-time stabilization is addressed in \cite{espitia2021predictor}. Stability-preserving numerical implementations of nonlinear predictor-based schemes can be found in \cite{karafyllis2017predictor}, and their extension to extremum seeking control is developed in \cite{oliveira2016extremum}, with prescribed-time convergence in \cite{yilmaz2024prescribed} and  exponential and unbiased convergence recently proposed in \cite{uESC-PDE}.

Outside the purview of PDE backstepping design, predictor feedback has been widely employed for the stabilization of  strict-feedback nonlinear systems with input delay \cite{cacace2016stabilization}, as well as for stochastic systems subject to input and output delays \cite{cacace2020predictor}. A prediction-based stabilization method for a general class of nonlinear time-varying systems with pointwise input delay is presented in \cite{mazenc2016stabilization}, while sequential predictor feedback strategies are developed in \cite{weston2018sequential, mazenc2017stabilization}, with further extensions to multivariable extremum seeking in \cite{malisoff2020delayed}. Observer-based predictor designs for linear systems with input and state delays are proposed in \cite{zhou2017stabilization}. Interval predictor design is applied in \cite{polyakov2013output} to achieve output feedback stabilization for linear systems with unknown, bounded, time-varying input delays. In addition to predictor feedback approaches, delay-dependent stability conditions have been formulated based on the existence of positive definite solutions to Riccati matrix equations \cite{richard99}. The Lyapunov–Krasovskii functional method has been  employed for the stability analysis of linear delay systems \cite{kolmanovskii1999liapunov} (see \cite{Fridman2003, fridman2003delay}).

 The present work tackles a nonlinear plant with a time-varying delay that depends on both the system's and actuator's states.  Our nonlinear composite PDE-ODE system is equivalent to a \emph{nonlinear integro-differential equation} with an implicitly defined state  dependent input delay.

\subsection{Contributions} 
A key aspect of the backstepping-based predictor feedback design to compensate input delays is the reinterpretation of  time-delays as  advective transport  PDEs, achieved through an infinite-dimensional representation of the actuator state. This approach  transforms a delayed ODE into a nonlinear  PDE coupled with a nonlinear ODE. This reformulation--fundamental to the control design strategies developed in \cite{krstic2010lyapunov,bekiaris2011compensation,bekiaris2012compensation,Krstic2017X,diagne2017compensation,oliveira2016extremum,bekiaris2010lyapunov,karafyllis2017predictor}, where the convective equation has the form of a \emph{conservation law}. In this work, we investigate a less-explored yet important class of advection PDE systems in which the transport speed is given by the integral of the ODE state that is a history of the state. These systems include both linear and nonlinear additive source terms and are coupled  with nonlinear ODEs, where the advection term serves as the input pathway to the ODE dynamics. The stabilization problem concerns a class of nonlinear integro-differential systems with an input delay that is dependent on the integral of the state. The core contributions of this work can be summarized as follows.

 \textbf{(1) Multi-layer predictor feedback design.} We design a multi-layer predictor feedback that achieves asymptotic stabilization and exact compensation of the actuator dynamics for a coupled \emph{doubly nonlinear} coupled PDE–ODE system, where the transport speed depends on the integral of the ODE state 
    {\color{black}indicating that it is the inventory level--rather than the accumulation of workload at production stages--that drives the production speed. This assumption is especially relevant in production or queuing systems where raw materials, semi-finished, or finished products have limited shelf lives \cite{soman2004combined}.
    Moreover, the PDE no longer represents a conservation law but includes additive source terms}, which may be nonlinear functions of its state--for instance, a “recirculation loop” \emph{inhomogeneity} that depend on the uncontrolled boundary. As in prior work, our design necessitates the existence of a nominal controller for the delay-free plant. \textcolor{black}{The  multi-layer predictor feedback design is constructed by   solving the
transport PDE using the method of characteristics (see e.g. \cite{diagne2017compensation,NM2018}). Our controller incorporates a compensator that effectively cancels the influence of inhomogeneous terms in the PDE actuator state, without imposing any restrictions on the severity of the nonlinearity or the \emph{feasibility condition} \cite{bekiaris2011compensation,bekiaris2012compensation}, while guaranteeing the latter through an assumption on the positivity of the transport speed to avoid
     an ill-posed problem (see \cite{Krstic2017X,diagne2017compensation}).} 

  \textbf{(2) Novel nonlinear backstepping transformations.} 
     \textcolor{black}{Following the spirit of \cite{krstic2009delay} and \cite{LMbook}, we use backstepping transformations combined with predictor feedback controls to compensate the delay and make the system stable. But due to the state-dependent delay and source term that appears in the PDE, new transformations rooted in a multi-layer nonlinear predictor whose state accurately forecasts the future trajectories of the nonlinear plant and actuator are introduced. The predictor state is used as the argument of the nominal delay-free control law, thus preserving the structure of predictor-feedback design while compensating for nonlinear, inhomogeneous actuator dynamics. More precisely, the novelty of our work relies on a carefully constructed  stable target system. Furthermore}, our design follows the ``separation principle," combining two distinct compensators: one for the underlying \emph{homogeneous} dynamics, which is described as a conservation law, and another to counteract the inhomogeneous source terms. This framework is first developed for the case of a linear source term, where the second compensator directly neutralizes the ``virtual" disturbance induced by the source terms. For a more general scenario involving a nonlinear source term that is a function of the uncontrolled boundary value, nonlinear boundary conditions (flux control) and an additional linear friction that depends on the PDE state, our  control design approach  is augmented by  an explicit and time-dependent   kernel function that systematically scale the aforementioned compensator components to counteract the effect of the linear friction term.

\textbf{(3) Well-posedness and Stability.} The stability and well-posedness of the closed-loop system present significant technical challenges. The backstepping transformation maps the original system with the predictor-feedback controller into a target system, and is equivalent to its inverse under the supremum norm. Departing from classical Lyapunov methods, we characterize the transformation along characteristic curves to prove global asymptotic stability of the target system, which, by norm equivalence, ensures the stability of the original closed-loop system. 
{\color{black} Though both the well-posedness analysis and stability proof follow the steps in \cite{Li2025, Li2025CPDE}, the technical challenges in the present contribution  are substantially greater. First, an additional weighting function is introduced in order to predict the moving window integral, which complicates the well-posedness analysis. Second,  the doubly nonlinear structure of the system in its PDE and its ODE states under the presence of source terms introduces  significant challenges not present in the simpler cases  \cite{Li2025, Li2025CPDE}. The well-posedness is performed on intricately coupled nonlinear predictor and nonlinear backstepping transformation whereas the stability requires to first prove that the additional predictor layer, designed specifically to compensate for the source term, is itself bounded by the system states.  
when there is no source term in the PDE, \cite{Li2025} has proposed a bilayer predictor-feedback strategy, with one layer predicting the future ODE state and the other addressing the actuator dynamics. However, the presence of source terms requires a tri-layer predictor-feedback design. The additional layer, induced by the source term predicts both the distributed PDE state and the evolution of its uncontrolled boundary.  } 
\textcolor{black}{Our contribution differs from \cite{irscheid2022output,gabriel2023robust,deutscher2018output}, which consider a heterodirectional coupled linear PDE in \emph{cascade} with a nonlinear or linear ODE, and predominantly assume constant coefficients—thereby leading to a constant input delay. Additional control results for PDE–ODE \emph{cascade} systems can be found in \cite{CDC2019,AJ2022,AJ2023,TO2017,TOM2022}. In contrast, our system is not a simple \emph{cascade}: the propagation speed depends on the integral of the nonlinear ODE state, and  source terms are present in the actuator dynamics. This yields a \emph{mixed} PDE–ODE structure with a delay that varies with the ODE state history.
It is also a setting commonly encountered in service queues, supply chains, highly re-entrant manufacturing lines, and other factory production systems \cite{CKW,SW2011}. In our case, flux control leads to a controlled boundary that is nonlinearly influenced by the history of the nonlinear ODE to be stabilized, marking a key distinction from the problem settings in \cite{Li2025,Krstic2017X}. }\\

The organization of this paper is as follows. In Section \ref{section 2}, we present our multi-layer predictor feedback design procedure considering an inhomogeneous PDE with a linear  additive recycling term and state-dependent propagation speed.  In Section \ref{sec3}, we extend the design principle to  a  class of nonlinear actuator dynamics with recycling and friction source terms.  In Section \ref{section m}, we apply our predictor design to a  buffer-driven production line that includes both recycling and friction terms and provide numerical simulations in Section \ref{section 5}. Finally, Section \ref{section con} concludes the paper and outlines future research directions.

\section{Mixed PDE-ODE  under state-dependent transport speed and recycle } \label{section 2}

\subsection{Problem statement and main result}

We consider a coupled PDE-ODE system, where the ODE is given by
\begin{equation}\label{equation X} \dot{X}(t) = f(X(t), u(0,t)), \end{equation}
where $X : [0,\infty) \to \mathbb{R}^n$ denotes the ODE state, and $f : \mathbb{R}^n \times \mathbb{R} \to \mathbb{R}^n$ is a continuously differentiable function satisfying $f(0,0) = 0$. The ODE subsystem is located at the uncontrolled boundary $x = 0$ of the transport domain.

The transport dynamics are governed by the \textcolor{black}{balance law}
\begin{align} \partial_t u(x,t) &= \lambda\left( \int_{t-\tau}^{t} X(s)\,ds \right)\, \partial_x u(x,t) + g(x)\, u(0,t), \label{equation u} \\ 
{u(D,t)}&= {U(t)}, \label{boundary condition} \end{align}
for $(x,t) \in [0,D] \times [0,\infty)$, where $\tau > 0$ is a fixed time window, $g:[0,D] \to \mathbb{R}$ is a continuously differentiable function, and $u : [0,D] \times [0,\infty) \to \mathbb{R}$ represents the actuator state.

The transport speed is described by a continuously differentiable mapping $\lambda : \mathbb{R}^n \to [0,\infty)$. 

The control input is given by the boundary actuation $U(t)$, which will be designed to stabilize the coupled system.

The initial condition along the actuation path is specified by
\begin{equation} u(x,0)=u_0(x),\label{initial condition} \end{equation}
and the initial history of the ODE state is given by
\begin{equation} X(s)=h(s) \end{equation}
for all $-\tau \leq s\leq 0$.

For the dynamics \eqref{equation X}, we make the following basic assumptions:
\begin{assumption}\label{Assumption 1}

	The	system $\dot{X} = f(X, \omega)$ is strongly forward complete with respect to $\omega$.	

\textcolor{black}{
    The system  $\dot{X} = f(X, \omega)$ is strongly forward complete means that, for every initial condition $X(0)$ and every measurable locally essentially bounded function $\omega$, the corresponding solution is defined for all $t\geq 0$ (see \cite{Krstic2010}). }

    \textcolor{black}{Specifically, there exists a smooth positive definite function
	$\Theta $ and class $\mathcal{K}_\infty$ functions $\mathcal{G}_1, \mathcal{G}_2$ and $\mathcal{G}_3$ such that for the
	plant $\dot{X}= f(X,w)$, the following holds (see \cite{Krstic2010}):
	\begin{align}
		\mathcal{G}_1(|X|)\leq &\Theta(X)\leq \mathcal{G}_2(|X|),  \label{equation 7}\\
		\frac{\partial \Theta(X)}{\partial X}f(X,w)\leq &\Theta(X)+\mathcal{G}_3(|w|) \label{equation 8}
	\end{align}
	for all $(X,w)^T\in \mathbb{R}^{n+1}$ .
    }
\end{assumption}

\begin{assumption}\label{Assumption 2}
	The system $\dot{X} = f(X, \kappa(X) + \omega) $ is input-to-state
	stable (ISS) with respect to $\omega$,
	i.e., there exists a class $\mathcal{KL}$ function $\eta$ and a class $\mathcal{K}$ function $\psi$ such that
	\begin{equation}\label{ISSE}
		|X(t)|\leq \eta(|X(0)|,t)+\psi(\|\omega\|),
	\end{equation}
	where $\|\omega\|:=\sup \{|\omega(t)|,\, t\geq 0\}.$

	Moreover, the feedback law
	$\kappa : \mathbb{R}^n \to \mathbb{R} $ is continuously differentiable with $\kappa(0) = 0$. 
\end{assumption}
We adopt the definitions of $\mathcal{K}$, $\mathcal{K}_{\infty}$ and $\mathcal{KL}$  functions, as well as the ISS stability, from \cite{Sontag1995}.

\begin{assumption}\label{Assumption 3}
\textcolor{black}{
To guarantee that the control signal reaches the boundary and ensure the well-posedness of the control law, particularly when the actuation is defined as a flux in Section \ref{sec3}, we assume the existence of two positive constants $\underline \lambda$ and $\bar\lambda$ such that
\begin{equation}\label{eqlam} \underline{\lambda}\leq \lambda(Z) \leq \bar\lambda, \quad \text{for all } Z \in \mathbb{R}^n. \end{equation}
}
\end{assumption}

To stabilize the coupled PDE-ODE system described by \eqref{equation X}--\eqref{boundary condition}, we propose a multi-layer predictor-feedback control law defined by

\begin{align}
U(t)=\underbrace{\kappa(p_1(D,t))}_{\rm \textcolor{blue}{Nominal\, \rm  Control\, \rm  law }}-\underbrace{\int_0^D \frac{g(D-y) p_2(y,t)}{\lambda(p_3(y,t))} dy,}_{\substack{\text{\rm \textcolor{red}{Compensation of the}}\\  \text{\textcolor{red}{ inhomogeneous  term in \eqref{equation u} }}}}\label{boundary control}
\end{align}
where $p_i$, $i=1,2,3$ satisfy the following coupled integral equations
\begin{align}	&p_1(x,t)=X(t)+\int_{0}^{x}\frac{f(p_1(y,t),p_2(y,t))}{\lambda(p_3(y,t))} dy, \label{equation p_1(x,t)}\\
 &p_2(x,t)=u(x,t)+\int_0^x \frac{g(x-y) p_2(y,t)}{\lambda(p_3(y,t))}  dy  \label{equation p_2(x,t)},\\
	&p_3(x,t)=\int_{t-\tau}^{t}\gamma(s,\sigma(x,t))X(s)ds\notag \\&~~~~~~ ~~~~~~~+\int_{0}^{x}\gamma(\sigma(y,t), \sigma(x,t))\frac{p_1(y,t)}{\lambda(p_3(y,t))}dy,  \label{equation p_3(x,t)}\\
	& \sigma(x,t)=t+\int_{0}^{x}\frac{1}{\lambda(p_3(y,t))} dy \label{equation sig(x,t)}
\end{align}
for all $x\in [0,D]$, where the weight function 
\begin{equation}\label{W}
	\gamma(s,t) =\begin{cases}
		1,  \quad & \text{if } t-\tau \leq s \leq t,  \\
		0, \quad & \text{else.  }  
	\end{cases}
\end{equation}

\begin{rem}
The predictor feedback law in \eqref{boundary control} differs significantly from the one proposed in \cite[Chapter 14]{krstic2009delay}, which addresses actuator dynamics governed by a first-order hyperbolic partial integro-differential equation (PIDE) with spatially varying coefficients and an additive integral source term, cascaded with a linear ODE. In that work, the control strategy extends the classical predictor feedback by incorporating a gain kernel under the integral to compensate for the source term's effect. In contrast, the derivation of \eqref{boundary control}–\eqref{W} leads to a more intricate structure due to  the state-dependent advection speed of the PDE as well as the nonlinearity of the ODE plant under consideration  and the inhomogeneity of the infinite-dimensional  actuator state. These challenges necessitate the construction of  a nonlinear and invertible backstepping transformation, as established in \textbf{Lemma \ref{lemma 1}} and \textbf{Lemma \ref{lemma 2}.} 
\end{rem}

{With the predictor-feedback controller, we state our first theorem that guarantees the stability of the resulting closed-loop system, following the PDE backstepping approach, which helps to provide key point of predictor feedback design for the composite PDE-ODE system with source terms.}

\begin{thm}\label{mt}
	Under Assumptions \ref{Assumption 1}--\ref{Assumption 2}, for all initial conditions $h \in C^0([-\tau,0]) $ and Lipschitz continuous  $u_0$ 
	on $[0,D]$ that satisfies the compatibility condition
	\begin{equation}\label{cu}
		u_0 (D) = \kappa(p_1(D,{0}))-\int_0^D \frac{g(D-y) p_2(y,{0})}{\lambda(p_3(y,{0}))} dy,
	\end{equation}
	there exists a unique \textcolor{black}{strong} solution $\left(X,u\right),$ where $X \in  C^1 ([0,\infty))$ and \textcolor{black}{$u$} is locally
	Lipschitz on $[0,D] \times [0,\infty),$ to the
	closed-loop system \eqref{equation X}--\eqref{boundary condition} with the control law (\ref{boundary control})--(\ref{equation sig(x,t)}). Moreover, there exists a class $\mathcal{KL}$
	function $\mathcal{L}_1$ such that the following holds:
	\begin{align}
	   	\sup_{s\in [t-\tau,t]}& \big|X(s)\big|+\sup_{x\in [0,D]}\big|u(x,t)\big| \notag \\
        &\leq \mathcal{L}_1\left(\sup_{s\in [-\tau,0]}\big|h(s)\big|+\sup_{x\in [0,D]}\big|u_0(x)\big|,\,\,t \right) \label{inequality X u} 
	\end{align}
	for all $ t \geq 0$.
\end{thm}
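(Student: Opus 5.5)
The plan is to follow the backstepping/predictor route the paper announces: map the closed loop to a stable target system, analyze that system along characteristics, and transfer the estimate back through the inverse transformation. We work with the forward transformation
\[
w(x,t)=u(x,t)-\kappa(p_1(x,t))+\int_0^x \frac{g(x-y)\,p_2(y,t)}{\lambda(p_3(y,t))}\,dy ,
\]
where $p_1,p_2,p_3,\sigma$ are given by \eqref{equation p_1(x,t)}--\eqref{equation sig(x,t)}.

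\textbf{Step 1 (predictor interpretation).} Differentiating \eqref{equation p_1(x,t)}--\eqref{equation sig(x,t)} in $t$ and $x$, we verify the following. Along the characteristic time $\sigma(x,t)$, $p_1(x,t)=X(\sigma(x,t))$, $p_2(x,t)=u(0,\sigma(x,t))$ and $p_3(x,t)=\int_{\sigma(x,t)-\tau}^{\sigma(x,t)}X(s)\,ds$, all of them when $U$ is given by \eqref{boundary control}.

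Each $p_i$ satisfies a transport equation $\partial_t p_i=\lambda(p_3)\,\partial_x p_i\cdot(\ldots)$, with the common factor $\lambda(p_3(0,t))/\lambda(p_3(x,t))$-type structure obtained from $\partial_t\sigma=\ldots\,\partial_x\sigma$. The weight $\gamma$ is what allows $p_3$ to encode the moving window when $\sigma(x,t)-\tau<t$ versus $\ge t$.

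With these identities, $w$ satisfies a homogeneous target system:
\begin{itemize}
\item $\partial_t w=\lambda\big(\int_{t-\tau}^t X\big)\,\partial_x w$;
\item $w(D,t)=0$;
\item $\dot X=f\big(X,\kappa(X)+w(0,t)\big)$.
\end{itemize}
The second compensator in \eqref{boundary control} is exactly what cancels the source $g(x)u(0,t)$, because $p_2$ is the resolvent solution of the Volterra equation \eqref{equation p_2(x,t)}. This should be essentially the content of Lemma \ref{lemma 1}. The inverse transformation, which expresses $u$ in terms of $w$ via the delay-free closed-loop predictors driven by $w$, is the content of Lemma \ref{lemma 2}.

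\textbf{Step 2 (well-posedness).} Assumption \ref{Assumption 3} gives $\underline\lambda\le\lambda\le\bar\lambda$. Hence $\sigma(x,t)-t\in[x/\bar\lambda,\,x/\underline\lambda]$, $\sigma$ is strictly increasing in $x$, and the integral equations \eqref{equation p_1(x,t)}--\eqref{equation sig(x,t)} are well defined for each fixed $t$.

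For each fixed $t$ we solve them by a fixed-point (Picard) argument in $x$. Global existence in $x\in[0,D]$ comes from Assumption \ref{Assumption 1}: the inequality \eqref{equation 8} gives an a priori bound on $\Theta(p_1)$ via Gronwall in the variable $\sigma$. Boundedness of $p_2$ comes from the linear Volterra structure.

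Next, the target system is solved by characteristics. Since $w(D,t)=0$, the term $w(0,t)$ vanishes after the finite time $\sigma(D,t)-t\le D/\underline\lambda$, and $X$ exists globally by forward completeness. A contraction on $C^0$ over successive intervals of length less than $D/\bar\lambda$ gives uniqueness. The Lipschitz regularity of $u$ follows from Lipschitz $u_0$ and the compatibility condition \eqref{cu}, which makes $w_0(D)=0$.

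\textbf{Step 3 (stability).} In the target system, $w(x,t)=w\big(\phi(x,t),0\big)$ along characteristics, with $|w(\cdot,t)|_\infty\le|w_0|_\infty$ and $w\equiv0$ for $t\ge D/\underline\lambda$. The ISS estimate \eqref{ISSE}, combined with the bound on $X$ over $[0,D/\underline\lambda]$ from \eqref{equation 7}--\eqref{equation 8}, yields a $\mathcal{KL}$ estimate for $\sup_{[t-\tau,t]}|X|+\|w(t)\|_\infty$ in terms of $\sup_{[-\tau,0]}|h|+\|w_0\|_\infty$.

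Finally, we establish the norm equivalences
\[
\|u\|+\sup|X|\le\alpha_1\big(\|w\|+\sup|X|\big),\qquad \|w\|\le\alpha_2\big(\|u\|+\sup|X|\big),
\]
with class $\mathcal K_\infty$ functions $\alpha_1,\alpha_2$. Composing these with the target estimate gives \eqref{inequality X u}.

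\textbf{Main obstacle.} The hardest part is these equivalence bounds. One must bound $p_1$ and $p_3$ in terms of $\|u\|$ and $\sup_{[t-\tau,t]}|X|$, using the coupled fixed point together with $\mathcal G_3$ and Gronwall over $x\in[0,D]$. One must also bound $p_2$ linearly by $\|u\|$, using the resolvent of $g$ divided by $\underline\lambda$. Conversely, the inverse map has to be controlled by the delay-free predictors driven by $w$, which requires the ISS estimate of Assumption \ref{Assumption 2} evaluated along $\sigma$.

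A subtle point is the discontinuous weight $\gamma$ in \eqref{equation p_3(x,t)}. I would handle it by splitting into the cases $\sigma(x,t)-\tau\le t$ and $\sigma(x,t)-\tau>t$, checking in each case that $p_3$ is continuous and Lipschitz in the state so that the contraction argument of Step 2 applies.
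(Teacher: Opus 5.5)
Your plan follows the same architecture as the paper, whose proof of Theorem~\ref{mt} is the specialization of Theorem~\ref{thm3} and Lemmas~\ref{lemma 9}--\ref{lemma 12}. The shared steps are:
\begin{itemize}
\item the transformation to the homogeneous target system;
\item the explicit characteristic solution of $w$ with finite-time extinction;
\item ISS plus time-splitting for $X$;
\item two-sided norm equivalence.
\end{itemize}

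The substantive difference is the inverse bound of Lemma~\ref{lemma 4}, and there your choice is the sturdier one. The paper bounds $\pi_1$ through the forward-completeness inequality \eqref{equation 8} and a comparison with $\dot z=W(z)+C$, where $W$ contains $\mathcal{G}_3\circ\rho\circ\mathcal{G}_1^{-1}$. When this composition is superlinear, the comparison solution blows up before $x=D$ for large $|X(t)|$, so that step does not by itself give a bound on all of $[0,D]$. An example is $f(X,\omega)=\omega$, $\kappa(X)=-X^3$, $\Theta=X^2$, for which $W$ grows like $V^3$. Your route reparametrizes by $s=\overline\sigma(x,t)-t$, so that $d\pi_1/ds=f(\pi_1,\kappa(\pi_1)+w)$, and then applies \eqref{ISSE}. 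This gives $\sup_x|\pi_1(x,t)|\le\eta(|X(t)|,0)+\psi(\sup_x|w(x,t)|)$ with no restriction.

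Forward completeness is the right tool only in the forward direction. There $p_2$ is first bounded by the linear Volterra estimate and then enters $p_1$ as an exogenous input. For the same reason, your appeals in Steps~2--3 to forward completeness and to \eqref{equation 7}--\eqref{equation 8} for the closed-loop $X$ do not work on their own, because the input $\kappa(X)+w(0,t)$ is not exogenous. The ISS estimate you also invoke is what actually gives global existence and the bound on $[0,D/\underline\lambda]$.

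The remaining differences are minor:
\begin{itemize}
\item You solve the forward predictor at every $t$ by Picard iteration. The paper solves it only at $t=0$, by a method of steps on the switching of $\gamma$, and recovers $p_i$ for $t>0$ from $p_i\equiv\pi_i$. That identity is also what shows the $u$ built from $(X,w)$ by the inverse map satisfies the control law at $x=D$; you should state this explicitly.
\item Your uniform extinction time $D/\underline\lambda$, rather than the data-dependent $\sigma(D,0)$, makes the final $\mathcal{KL}$ construction cleaner.
\item Bounding $p_3$ is unnecessary, since it enters only through $\lambda(p_3)\in[\underline\lambda,\bar\lambda]$.
\item $w(\phi(x,t),0)$ should read $w_0(\zeta(0;x,t))$.
\end{itemize}
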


The following lemmas state the backstepping transformation and its inverse.
\begin{lem}\label{lemma 1}
	The control law defined by (\ref{boundary control})--(\ref{equation sig(x,t)}), together
	with the following infinite-dimensional backstepping transformation
	\begin{equation}
		w(x,t)=\underbrace{u(x,t)-\kappa(p_1(x,t))}_{\substack{\text{\rm \textcolor{blue}{Backstepping transformation}}\\  \text{\textcolor{blue}{ \rm for homogeneous  PDEs  }}}}+\underbrace{\int_{0}^{x} \frac{g(x-y)p_2(y,t)}{\lambda(p_3(y,t))}dy}_{\substack{\text{\rm \textcolor{red}{Term induced   }}\\  \text{\textcolor{red}{\rm by a recycle source term}}}} , \label{backstepping transformation}
	\end{equation}
where the three-layer nonlinear state predictor,  $p_i,\, i=1,2,3,$ is defined in  \eqref{equation p_1(x,t)}--\eqref{equation p_3(x,t)},	maps the system \eqref{equation X}--\eqref{boundary condition} into the following target system
	\begin{align}
\dot{X}&=f(X(t),\kappa(X(t))+w(0,t)), \label{equation X w}\\
		\partial_t w(x,t)&=\lambda\left(\int_{t-\tau }^{t}X(s)ds\right)\partial_x w(x,t), \label{equation w}\\
		w(D,t)&=0 \label{boundary condition w}.
        \end{align}
\end{lem}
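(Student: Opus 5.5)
The proof shows that each predictor layer solves the same transport equation as $w$, with boundary data at $x=0$ equal to the corresponding current state. The identities \eqref{equation X w}--\eqref{boundary condition w} then follow by direct differentiation of \eqref{backstepping transformation}. Throughout, write $Z(t)=\int_{t-\tau}^t X(s)\,ds$ and $q(y,t)=p_2(y,t)/\lambda(p_3(y,t))$.

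\textbf{Step 1: the boundary equations.} At $x=0$, \eqref{equation p_1(x,t)}--\eqref{equation sig(x,t)} give $p_1(0,t)=X(t)$, $p_2(0,t)=u(0,t)$, $\sigma(0,t)=t$. Since $\gamma(s,t)=1$ exactly for $s\in[t-\tau,t]$, also $p_3(0,t)=Z(t)$. Hence \eqref{backstepping transformation} yields $w(0,t)=u(0,t)-\kappa(X(t))$, which substituted into \eqref{equation X} is exactly \eqref{equation X w}. At $x=D$, \eqref{backstepping transformation} combined with the control law \eqref{boundary control} gives $w(D,t)=0$, i.e.\ \eqref{boundary condition w}.

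\textbf{Step 2: the predictors are transported.} The core claim is
\begin{equation*}
\partial_t p_i(x,t)=\lambda(Z(t))\,\partial_x p_i(x,t),\qquad i=1,2,3,
\end{equation*}
and consequently $\partial_t \sigma=\lambda(Z)\partial_x\sigma$ and $\partial_t q=\lambda(Z)\partial_x q$. Interpreted, $p_1(x,t)$ is the future value $X(\sigma(x,t))$, $p_2(x,t)$ is $u(0,\sigma(x,t))$, and $p_3(x,t)$ is the window integral $Z(\sigma(x,t))$.

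To handle the indicator $\gamma$ in $p_3$, first change variables $y\mapsto s=\sigma(y,t)$, using $d\sigma=dy/\lambda(p_3)$. This rewrites \eqref{equation p_3(x,t)} as
\begin{equation*}
p_3(x,t)=\int_{\sigma(x,t)-\tau}^{\sigma(x,t)}\hat X(s,t)\,ds,
\end{equation*}
where $\hat X(s,t)=X(s)$ for $s\le t$ and $\hat X(s,t)=p_1(\sigma^{-1}(s,t),t)$ for $s>t$. In this form the integrand is continuous and differentiation is legitimate.

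Next, set $r_i=\partial_t p_i-\lambda(Z)\partial_x p_i$ and differentiate each integral equation in $t$ and in $x$. In the $t$-derivative, the boundary contributions $\dot X(t)=f(X,u(0,t))$ and $\partial_t u=\lambda(Z)u_x+g(x)u(0,t)$ appear. They cancel against $\lambda(Z)$ times the $x$-derivative, precisely because $\lambda(p_3(0,t))=\lambda(Z(t))$ and $p_2(0,t)=u(0,t)$. What remains is a linear homogeneous Volterra system for $(r_1,r_2,r_3)$ in $x$ whose coefficients are built from $\partial f$, $g$ and $\lambda'$. Gronwall's inequality then forces $r_i\equiv 0$. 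I expect this to be the main obstacle, for two reasons: the three layers are coupled through $\lambda(p_3)$ and $\sigma$, and the moving window must be treated through the reformulation above. At the regularity level of Theorem \ref{mt} (Lipschitz $u_0$), the identities should be read almost everywhere or along characteristics.

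\textbf{Step 3: the $w$-equation.} Since $p_1$ is transported, so is $\kappa(p_1)$. For $I(x,t)=\int_0^x g(x-y)q(y,t)\,dy$, Step 2 and an integration by parts give
\begin{align*}
\partial_t I-\lambda(Z)\partial_x I&=\lambda(Z)\Big[\int_0^x g(x-y)\partial_y q\,dy-g(0)q(x,t)-\int_0^x g'(x-y)q\,dy\Big]\\
&=-\lambda(Z(t))\,g(x)\,q(0,t).
\end{align*}
Because $q(0,t)=u(0,t)/\lambda(Z(t))$, the right-hand side equals $-g(x)u(0,t)$. On the other hand, $\partial_t u-\lambda(Z)\partial_x u=g(x)u(0,t)$ by \eqref{equation u}. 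Adding the three pieces of \eqref{backstepping transformation}, the source terms cancel and $w$ satisfies \eqref{equation w}, which completes the lemma.
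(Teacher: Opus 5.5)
Your proof is correct and follows essentially the same route as the paper's proof of Lemma~\ref{lemma 9} in Appendix~\ref{Appendix A}, of which Lemma~\ref{lemma 1} is the special case: the residuals $r_i=\partial_t p_i-\lambda(Z)\partial_x p_i$, together with that of $\sigma$, satisfy a homogeneous Volterra system that vanishes at $x=0$, so they vanish identically, and \eqref{equation X w} and \eqref{boundary condition w} are read off at $x=0$ and $x=D$. There are two minor differences: your substitution $s=\sigma(y,t)$ handles the indicator $\gamma$ more rigorously than the paper's formal $\gamma_1,\gamma_2$ derivative terms, and your integration by parts in Step 3 is unnecessary, since by \eqref{equation p_2(x,t)} the transformation \eqref{backstepping transformation} is exactly $w=p_2-\kappa(p_1)$, which is transported as soon as $p_1$ and $p_2$ are.
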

{\color{black} The compatibility condition for $w$ at $(D,0)$ can be easily checked from  the transformation \eqref{backstepping transformation} and the compatibility condition \eqref{cu}.}
\begin{lem}\label{lemma 2}
	The inverse of the infinite-dimensional backstepping transformation (\ref{backstepping transformation}) is given by
	\begin{equation}\label{inverse backstepping transformation}
u(x,t)=w(x,t)+\kappa(\pi_1(x,t))-\int_0^{x} \frac{g(x-y)\pi_2(y,t)}{\lambda(\pi_3(y,t))}dy,
	\end{equation}
	where $\pi_i$, $i=1,2,3$ are defined as the solutions to the following coupled integral equations
	\begin{align}
&\pi_1(x,t)=X(t)+\int_{0}^{x}\frac{f\left(\pi_1(y,t),\pi_2(y,t)\right)}{\lambda(\pi_3(y,t))}dy, \label{pi1}\\
&\pi_2(x,t)= w(x,t)+\kappa(\pi_1(x,t)),  \label{pi2}\\
		&\pi_3(x,t)= \int_{t-\tau}^{t}\gamma(s, \overline{\sigma}(x,t))X(s)ds \notag \\
          &~~~~~~~~~~~+\int_{0}^{x}  \gamma( \overline{\sigma}(y,t),  \overline{\sigma}(x,t))\frac{\pi_1(y,t)}{\lambda(\pi_3(y,t))}dy,  \label{pi3}\\
		& \overline{\sigma}(x,t)=t+\int_{0}^{x}\frac{1}{\lambda(\pi_3(y,t))} dy,  \label{pi4}
	\end{align}
\end{lem}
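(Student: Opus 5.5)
The plan is to show that, given $(X,w)$, the functions $\pi_1,\pi_2,\pi_3$ and $\overline{\sigma}$ defined by \eqref{pi1}--\eqref{pi4} coincide with $p_1,p_2,p_3$ and $\sigma$ from \eqref{equation p_1(x,t)}--\eqref{equation sig(x,t)}, where $u$ is the function reconstructed by \eqref{inverse backstepping transformation}. Once this identification is made, \eqref{inverse backstepping transformation} is just \eqref{backstepping transformation} solved for $u$.

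\textbf{Step 1: $p_2$ satisfies \eqref{pi2}.} Start from the forward transformation \eqref{backstepping transformation}. By the definition \eqref{equation p_2(x,t)}, the sum $u(x,t)+\int_0^x g(x-y)p_2(y,t)/\lambda(p_3(y,t))\,dy$ equals $p_2(x,t)$. Hence \eqref{backstepping transformation} reads
\begin{equation*}
w(x,t)=p_2(x,t)-\kappa(p_1(x,t)),
\end{equation*}
that is, $p_2=w+\kappa(p_1)$. Substituting this into \eqref{equation p_1(x,t)}, \eqref{equation p_3(x,t)} and \eqref{equation sig(x,t)} shows that the triple $(p_1,p_2,p_3)$, together with $\sigma$, satisfies exactly the system \eqref{pi1}--\eqref{pi4} driven by $(X,w)$.

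\textbf{Step 2: uniqueness for \eqref{pi1}--\eqref{pi4}.} I would argue that \eqref{pi1}--\eqref{pi4} has a unique continuous solution in $x\in[0,D]$ for each fixed $t$. Eliminating $\pi_2$ via \eqref{pi2} reduces it to a Volterra-type system in $(\pi_1,\pi_3)$ with delayed/windowed structure. The nonlinearities $f,\kappa,\lambda$ are $C^1$ and $\lambda\ge\underline\lambda>0$ by Assumption \ref{Assumption 3}, so the map $x\mapsto\overline\sigma(x,t)$ is strictly increasing and Lipschitz. On bounded sets one gets local Lipschitz estimates, and a contraction (or Gronwall) argument on small intervals $[0,\delta]$, iterated across $[0,D]$, gives existence and uniqueness. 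Global extension uses forward completeness, Assumption \ref{Assumption 1}, through the interpretation of $\pi_1$ as the ODE solution in the variable $\overline\sigma$.

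Granting uniqueness, Steps 1 and 2 give $\pi_i=p_i$ and $\overline\sigma=\sigma$. Then:
\begin{itemize}
\item \eqref{pi2} becomes $\pi_2=p_2$;
\item the integral term in \eqref{inverse backstepping transformation} equals the one in \eqref{backstepping transformation};
\item rearranging \eqref{backstepping transformation} yields \eqref{inverse backstepping transformation}.
\end{itemize}
Conversely, if $u$ is defined by \eqref{inverse backstepping transformation}, then $p_2:=\pi_2$ satisfies \eqref{equation p_2(x,t)}. Indeed, $u+\int_0^x g(x-y)\pi_2/\lambda(\pi_3)\,dy=w+\kappa(\pi_1)=\pi_2$. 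So the two maps are mutual inverses.

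\textbf{Main obstacle: the indicator $\gamma$.} The hard step is the uniqueness argument, because of the discontinuous weight $\gamma$ in \eqref{pi3}. The Lipschitz estimate must control differences of the form
\begin{equation*}
\int\big(\gamma(s,\overline\sigma^a)-\gamma(s,\overline\sigma^b)\big)X(s)\,ds.
\end{equation*}
I would bound such a term by the measure of the symmetric difference of the windows $[\overline\sigma-\tau,\overline\sigma]$. This measure is at most $2|\overline\sigma^a-\overline\sigma^b|$, which in turn is at most $C\int_0^x|\pi_3^a-\pi_3^b|\,dy$ using the Lipschitz constant of $1/\lambda$. This converts the indicator into a Lipschitz-in-$\overline\sigma$ contribution, after which the fixed-point argument closes.
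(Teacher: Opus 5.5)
Your argument is correct, but it reaches the identification $p\equiv\pi$ by a different route than the paper. The paper treats Lemma \ref{lemma 2} as a special case of Lemma \ref{lemma 10}, and that proof is dynamic:
\begin{itemize}
\item it asserts, by analogy with the $\mathbf{Z}\equiv 0$ computation in Lemma \ref{lemma 9}, that $\pi_1,\pi_2,\pi_3,\overline{\sigma}$ satisfy the same transport equation $\partial_t\pi=\lambda(\int_{t-\tau}^{t}X(s)ds)\,\partial_x\pi$ as $p_1,p_2,p_3,\sigma$;
\item it differentiates the inverse formula in $t$ and $x$ to check that the reconstructed $u$ satisfies the plant PDE, and the ODE at $x=0$;
\item it then gets $p_i\equiv\pi_i$ and $\sigma\equiv\overline\sigma$ from uniqueness for that transport equation, since both families coincide at $x=0$; this identity yields the boundary condition at $x=D$.
\end{itemize}
You instead work at each frozen $t$. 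The identity $w=p_2-\kappa(p_1)$ shows that the forward predictor solves \eqref{pi1}--\eqref{pi4}, and uniqueness of that Volterra-type system does the rest. The paper writes this identity down too, but only to derive the target PDE.

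Your route is more elementary and more robust. No time derivatives are taken, so you need neither the formal derivatives $\gamma_1,\gamma_2$ of the discontinuous indicator nor any differentiability of $u$ or $w$. It also proves invertibility for arbitrary profiles, not only along closed-loop trajectories. The paper's route, in exchange, gives the dynamic fact that $u$ reconstructed from a solution $(X,w)$ of the target system solves the plant. That is what Appendix \ref{Appendix B} relies on, and your static inverse gives it only together with a separate converse of Lemma \ref{lemma 1}.

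Three small repairs are needed.

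\emph{Existence in the converse direction.} Global existence of $(\pi_1,\pi_3,\overline\sigma)$ on $[0,D]$ should come from Assumption \ref{Assumption 2}, not Assumption \ref{Assumption 1}. Reparametrized by $\overline\sigma$, the $\pi_1$-equation is $d\pi_1/d\overline\sigma=f(\pi_1,\kappa(\pi_1)+w)$, whose input depends on the state. Forward completeness for exogenous inputs does not rule out finite escape here. The ISS estimate does: it gives $|\pi_1|\le\eta(|X(t)|,0)+\psi(\sup_x|w(x,t)|)$. In the forward direction this issue does not arise, since existence is supplied by $p$ and only uniqueness matters.

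\emph{Uniqueness of the forward system.} The converse also needs uniqueness for \eqref{equation p_1(x,t)}--\eqref{equation sig(x,t)}, so that the predictor computed from the reconstructed $u$ is exactly $\pi$. Your Step 2 argument covers this as well.

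\emph{The second indicator in \eqref{pi3}.} The term $\gamma(\overline\sigma(y,t),\overline\sigma(x,t))$ must be handled in $y$-space rather than $s$-space. Since $\overline\sigma$ has slope at least $1/\bar\lambda$, the set of $y$ where the two indicators differ has measure at most $2\bar\lambda\int_0^x|1/\lambda(\pi_3^a)-1/\lambda(\pi_3^b)|\,dy$. This again closes the Gronwall estimate.
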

{where the weight function $\gamma$ is still defined by \eqref{W}.}
Since {\eqref{equation w}} does not contain a source term, certain components of the \emph{backward predictor} associated with the target system differ from those of the \emph{forward predictor}.
The following lemmas establish the norm equivalence between the original system \eqref{equation X}–\eqref{boundary condition} and the transformed system \eqref{equation X w}–\eqref{boundary condition w}. 

\begin{lem} \label{lemma 3}
	There exists a class $\mathcal{K}_{\infty}$ function $\mathcal{K}_1$ such that
	\begin{equation}\label{norm 1}
		\sup_{x\in [0,D]}\big|w(x,t)\big|\leq \mathcal{K}_1\left( \big|X(t)\big|+\sup_{x\in [0,D]}\big|u(x,t)\big|\right)
	\end{equation}
	for all $t\geq 0$.
\end{lem}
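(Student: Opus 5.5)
We bound each term of the transformation \eqref{backstepping transformation} separately in the supremum norm, via Gronwall-type estimates on the predictor equations \eqref{equation p_1(x,t)}--\eqref{equation sig(x,t)}. Throughout, the spatial variable $x\in[0,D]$ plays the role of time, and we use the lower bound $\lambda\ge\underline\lambda>0$ of Assumption \ref{Assumption 3}. Directly from \eqref{backstepping transformation},
\begin{align*}
\sup_{x\in[0,D]}|w(x,t)| &\le \sup_{x}|u(x,t)|+\sup_x|\kappa(p_1(x,t))| \\
&\quad +\frac{D\,\|g\|_\infty}{\underline\lambda}\sup_x|p_2(x,t)|.
\end{align*}
It therefore suffices to bound $\sup_x|p_2|$ and $\sup_x|p_1|$ by class $\mathcal{K}_\infty$ functions of $|X(t)|+\sup_x|u(x,t)|$. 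The term $\kappa(p_1)$ is then handled by $\hat\kappa(r):=\max_{|z|\le r}|\kappa(z)|$, which is continuous, nondecreasing and vanishes at zero; we majorize it by a class $\mathcal{K}_\infty$ function.

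\textbf{Step 1 (the layer $p_2$).} Equation \eqref{equation p_2(x,t)} is a linear Volterra equation in $x$ whose kernel $g(x-y)/\lambda(p_3(y,t))$ has modulus at most $\|g\|_\infty/\underline\lambda$, whatever $p_3$ is. Gronwall's inequality gives
\[
\sup_x|p_2(x,t)|\le e^{D\|g\|_\infty/\underline\lambda}\,\sup_x|u(x,t)|.
\]
Note that $p_3$ enters only through the bound on $\lambda$, so the coupling with $p_3$ is harmless here.

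\textbf{Step 2 (the layer $p_1$).} This is the main obstacle, because $f$ is nonlinear and $p_1$ has no a priori bound. We use the forward completeness of Assumption \ref{Assumption 1}. First reparametrize by $\theta=\sigma(x,t)$. By \eqref{equation sig(x,t)}, $\partial_x\sigma=1/\lambda(p_3)$ lies in $[1/\bar\lambda,1/\underline\lambda]$, so $\sigma(\cdot,t)$ is a bijection onto an interval of length at most $D/\underline\lambda$. In this variable $P(\theta)=p_1(x,t)$ solves $\dot P=f(P,p_2)$. Inequality \eqref{equation 8} then gives
\[
\frac{d}{d\theta}\Theta(P)\le \Theta(P)+\mathcal{G}_3(|p_2|).
\]
Applying Gronwall over an interval of length at most $D/\underline\lambda$, together with \eqref{equation 7}, yields
\begin{align*}
|p_1(x,t)| &\le \mathcal{G}_1^{-1}\Big(e^{D/\underline\lambda}\big(\mathcal{G}_2(|X(t)|) \\
&\quad +(D/\underline\lambda)\,\mathcal{G}_3(\sup_x|p_2|)\big)\Big).
\end{align*}
Combining this with Step 1 bounds $\sup_x|p_1|$ by a class $\mathcal{K}_\infty$ function of $|X(t)|+\sup_x|u|$.

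Some care is needed on the existence and uniqueness of solutions of the coupled system \eqref{equation p_1(x,t)}--\eqref{equation sig(x,t)}, which we take as part of the well-posedness established elsewhere. The estimates above never use any bound on $p_3$ beyond $\lambda(p_3)\ge\underline\lambda$. Hence the argument sidesteps the integral history term in \eqref{equation p_3(x,t)} entirely. This also explains why \eqref{norm 1} involves only $|X(t)|$ and not the history $\sup_{s\in[t-\tau,t]}|X(s)|$.

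\textbf{Step 3 (assembly).} Collecting Steps 1 and 2 in the first inequality gives \eqref{norm 1} with
\[
\mathcal{K}_1(r)=r+\bar\kappa(\mathcal{K}_p(r))+\frac{D\|g\|_\infty}{\underline\lambda}\,e^{D\|g\|_\infty/\underline\lambda}\,r,
\]
where $\mathcal{K}_p$ is the class $\mathcal{K}_\infty$ bound from Step 2 and $\bar\kappa\ge\hat\kappa$ is a class $\mathcal{K}_\infty$ majorant. A sum and composition of class $\mathcal{K}_\infty$ functions is again class $\mathcal{K}_\infty$, so $\mathcal{K}_1$ is admissible.
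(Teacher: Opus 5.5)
Your proposal is correct and follows essentially the same route as the paper, which proves this lemma as a special case of Lemma \ref{lemma 11}: a Gronwall bound on the Volterra layer $p_2$, then a forward-completeness estimate via $\Theta$ for $p_1$, then the triangle inequality on the transformation with a $\mathcal{K}_\infty$ majorant for $\kappa$. Your reparametrization by $\theta=\sigma(x,t)$ is only a change of variables, equivalent to the paper's direct differentiation of $\Theta(p_1)$ in $x$ using $1/\lambda(p_3)\le 1/\underline\lambda$.
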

\begin{lem} \label{lemma 4}
	There exists a class $\mathcal{K}_{\infty}$ function $\mathcal{K}_2$ such that
	\begin{equation}\label{norm 2}
		\sup_{x\in [0,D]}\big|u(x,t)\big|\leq \mathcal{K}_2\left( |X(t)|+\sup_{x\in [0,D]}\big|w(x,t)\big|\right)
	\end{equation}
	for all $t\geq 0$.
\end{lem}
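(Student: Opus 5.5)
Lemma \ref{lemma 4} is proved using the inverse transformation of Lemma \ref{lemma 2}. From \eqref{inverse backstepping transformation},
\begin{equation*}
|u(x,t)|\le |w(x,t)|+|\kappa(\pi_1(x,t))|+\frac{\|g\|_\infty}{\underline\lambda}\int_0^x|\pi_2(y,t)|\,dy,
\end{equation*}
where we use Assumption \ref{Assumption 3} to bound $1/\lambda(\pi_3)$ by $1/\underline\lambda$ regardless of $\pi_3$. The third layer $\pi_3$ enters only through $\lambda$, and the bounds on $\lambda$ remove it entirely. So it suffices to bound $\sup_x|\pi_1(x,t)|$ and $\sup_x|\pi_2(x,t)|$ by a class $\mathcal{K}_\infty$ function of $|X(t)|+\sup_x|w(x,t)|$. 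Since $\pi_2=w+\kappa(\pi_1)$ by \eqref{pi2}, this reduces to a bound on $\pi_1$. Because $\kappa$ is $C^1$ with $\kappa(0)=0$, there is a class $\mathcal{K}_\infty$ function $\hat\kappa$ with $|\kappa(z)|\le\hat\kappa(|z|)$.

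To bound $\pi_1$, I would reinterpret \eqref{pi1} as an ODE in the spatial variable:
\begin{equation*}
\partial_x\pi_1(x,t)=\frac{f\big(\pi_1(x,t),\,\kappa(\pi_1(x,t))+w(x,t)\big)}{\lambda(\pi_3(x,t))},\qquad \pi_1(0,t)=X(t).
\end{equation*}
Along this ODE, the time-like variable $\sigma=\overline\sigma(x,t)$ of \eqref{pi4} satisfies $\partial_x\overline\sigma=1/\lambda(\pi_3)>0$, and it ranges over an interval of length at most $D/\underline\lambda$. In the variable $\sigma$, $\pi_1$ is a trajectory of the closed-loop system $\dot X=f(X,\kappa(X)+\omega)$ with disturbance $\omega(\sigma)=w(x(\sigma),t)$, started from $X(t)$. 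Assumption \ref{Assumption 2} then gives
\begin{equation*}
\sup_{x\in[0,D]}|\pi_1(x,t)|\le \eta(|X(t)|,0)+\psi\Big(\sup_{x\in[0,D]}|w(x,t)|\Big),
\end{equation*}
and this is class $\mathcal{K}_\infty$ in the required argument once $\eta(\cdot,0)$ is majorized by a class $\mathcal{K}_\infty$ function. The ISS bound also covers time horizons of arbitrary finite length, so the possibly non-uniform reparametrization causes no trouble.

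Combining the two steps, $\sup|\pi_2|\le\sup|w|+\hat\kappa(\sup|\pi_1|)$, and so
\begin{equation*}
\sup|u|\le\Big(1+\tfrac{D\|g\|_\infty}{\underline\lambda}\Big)\Big(\sup|w|+\hat\kappa(\sup|\pi_1|)\Big)+\hat\kappa(\sup|\pi_1|).
\end{equation*}
This yields $\mathcal{K}_2$, built as a sum and composition of class $\mathcal{K}_\infty$ functions.

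The main obstacle is justifying the reparametrization step. The equations \eqref{pi1}--\eqref{pi4} must admit a unique continuous solution on $[0,D]$, and the coupling with $\pi_3$, which involves the discontinuous weight $\gamma$, must not destroy the ODE interpretation. I would either invoke the well-posedness already needed for Lemma \ref{lemma 2}, or argue directly. For the direct argument, I would fix $\pi_3$ as a given continuous function, solve for $\pi_1$ using forward completeness (Assumption \ref{Assumption 1}) and the ISS bound above, and then close the loop with a fixed-point argument on $\pi_3$. This is possible because $\pi_3$ is bounded by $\tau\sup|X|$ plus a bounded multiple of $\sup|\pi_1|$. That estimate is not needed in the final bound, since only the positivity of $\lambda$ is used.
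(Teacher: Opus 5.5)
Your proof is correct, but it bounds the first predictor layer by a different route than the paper. The paper obtains Lemma \ref{lemma 4} as a special case of the proof of Lemma \ref{lemma 12}.

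\textbf{The paper's route.} There, $\pi_1$ is estimated with the forward-completeness function $\Theta$ of Assumption \ref{Assumption 1}. The whole argument $\kappa(\pi_1)+w$ is treated as an exogenous input in \eqref{equation 8}. This gives the scalar inequality $dV/dx\le W(V)+C$, with $V=\Theta(\pi_1)$ and $W(V)=\frac{1}{\underline\lambda}[V+\mathcal{G}_4(\rho(\mathcal{G}_1^{-1}(V)))]$. The bound is then read off from the comparison ODE $\dot z=W(z)+C$ on $[0,D]$.

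\textbf{Your route.} You reparametrize by $\overline{\sigma}(x,t)$ and apply the ISS property of Assumption \ref{Assumption 2} directly. The feedback $\kappa(\pi_1)$ therefore stays inside the stabilizing closed-loop structure, instead of being counted as a disturbance of size $\rho(|\pi_1|)$.

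\textbf{What this buys.} You get the explicit, horizon-independent estimate $\eta(|X(t)|,0)+\psi(\sup_x|w(x,t)|)$, and no existence question for an auxiliary ODE arises. The paper's route simply asserts that $\dot z=W(z)+C$ has a solution on $[0,D]$. This fails in general, because $W$ may be superlinear. Take $f(X,\omega)=\omega$, $\kappa(X)=-X^3$ and $\Theta=X^2$, for which both assumptions hold. Then $W(V)=(V+4V^3)/\underline\lambda$, whose solutions escape before $x=D$ once $\Theta(X(t))$ is large. No other choice of $\Theta$ can fix this: the only information that route uses is $|\omega|\le\rho(|X|)+|w|$, and that is consistent with $\dot X=X^3$. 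The forward-completeness argument is the right tool for the direct estimate of Lemma \ref{lemma 3}, where $p_2$ is bounded independently of $p_1$ and the comparison is linear. For the inverse estimate, however, your ISS argument is the one that actually closes.

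\textbf{Two minor points.}
\begin{itemize}
\item The ISS bound is stated for inputs on $[0,\infty)$. Extend $\omega$ by zero past the reparametrized interval, and use uniqueness ($f$ and $\kappa$ are $C^1$) to identify $\pi_1$ with that trajectory.
\item Your last display counts $\hat\kappa(\sup|\pi_1|)$ twice. This is harmless for an upper bound.
\end{itemize}
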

{The proofs of Theorem \ref{mt} and the associated Lemmas \ref{lemma 1}--\ref{lemma 4} can be regarded as special cases of {Theorem \ref{thm3} and Lemmas \ref{lemma 9}--\ref{lemma 10} and Lemmas \ref{lemma 11}--\ref{lemma 12} presented in the next section.}

\subsection{Predictor feedback design methodology }

Recalling the method of characteristics, we next present the rationale behind our multi-layer predictor design and explain how it conceptually extends classical delay-compensation schemes to systems with source terms and state-dependent transport speed.

Let us denote by $\zeta(s;x,t)$ the characteristic curve passing through $(x,t)$, i.e.,
\begin{equation}\label{cline}
\zeta(s;x,t)
=x-\int_t^s\lambda\Bigl(\int_{\theta-\tau}^{\theta}X(\eta)\,\mathrm{d}\eta\Bigr)\,d\theta,
\end{equation}
where $X$ is a continuous function.

The input‐delay mapping $\phi$ is then defined implicitly by
\begin{equation}\label{dp}
\zeta\bigl(\phi(x,t);x,t\bigr)=D,
\end{equation}
which means that the characteristic curve passing through $(x,t)$ starts from the boundary $x=D$ at time $\phi(x,t)$.

\textbf{Interpretation of the delay induced by the inhomogeneous PDE.} When the PDE no longer satisfies a conservation law, its output consists not only of the delayed input but also includes contributions from re-entering source terms. Considering the PDE with a recycling-type source term \eqref{equation u}, \eqref{boundary condition} and using the method of characteristics, the following holds
\begin{align}\label{apu0}
     u(0,t) &= U({\phi(0,t)}) + \int_{{\phi(0,t)}}^t g({\zeta(s;0,t)})u(0,s) \, ds, 
\end{align}
where {$\phi(0,t)$ is {defined by \eqref{dp}, i.e.,} the delay induced by the advection PDE satisfying $\zeta\bigl(\phi(0,t);0,t\bigr)=D$}. Equation \eqref{apu0}  indicates that system \eqref{equation X}--\eqref{boundary condition} is no longer equivalent to the following classical time-delay system in \cite{bekiaris2011compensation, bekiaris2012compensation, diagne2017compensation,Krstic2017,Krstic2017u,Krstic2017X}
\begin{equation}\label{ax1}
     \dot{X}(t) = f\left(X(t),\,U(\Phi(t))\right).
\end{equation}
An equivalence can be recovered by modifying  the  input of the ODE to compensate for the effect of the source term. More precisely, we can find  an implicitly defined function $\overline U$ as the ODE input, i.e.,
\begin{equation}\label{apx3}
    \dot{X}(t) = f\left(X(t),\,\overline{U}(\Phi(t)) \, \right).
\end{equation}
In \eqref{ax1} and \eqref{apx3}, $\Phi(t)$ refers to the delay which is $\phi(0,t)$ using our notation. 
\textcolor{black}{Moreover, the function $\overline{U}$ represents the \textit{effective input} acting on the ODE, which encapsulates both the delayed boundary control $U$ and the accumulated contribution of the re-entering source terms along the characteristic. Crucially, the delay argument $\Phi(t) = \phi(0,t)$ is determined exclusively by the transport speed, which in turn depends on the history of the ODE state but is independent of the actuator state. This clarifies that the system \eqref{equation X}--\eqref{boundary condition} retains the fundamental structure of a system with state-dependent (rather than input-dependent) delay, despite the presence of distributed source terms.}

\textbf{Interpretation of the compensator design.} To compensate the state‐dependent delay induced by the hyperbolic transport PDE, we construct the predictor along the characteristic curves. 
To that end, for any fixed $(x,t)\in[0,D]\times[0,\infty)$, we define the prediction instant $\sigma(x,t)$ by $$\zeta\bigl(\sigma(x,t);x,t\bigr)=0,$$ i.e., the time that the characteristic curve passing through $(x,t)$ arrives at the boundary $x=0$.
\begin{figure}[t]
  \centering
  \includegraphics[width=0.8\linewidth]{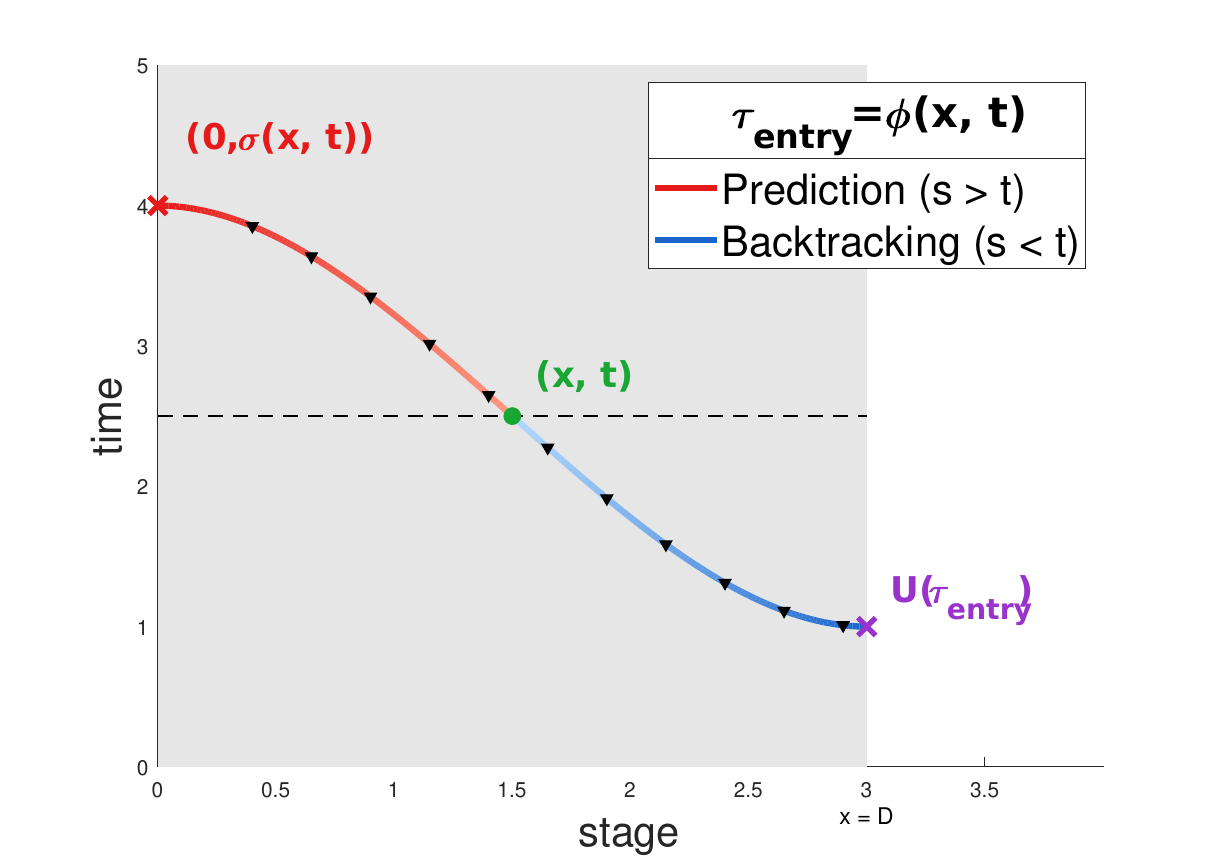}
  \caption{Characteristic curve diagram; the color gradient indicates the variation in control magnitude during transport, and the arrowheads denote the direction of propagation along each characteristic.}
  \label{fig:characteristic-line}
\end{figure}
{
A schematic of these characteristic curves is shown in Fig.~\ref{fig:characteristic-line}.}
Along each characteristic curve we introduce a triad of predictor variables:
\begin{align}
    p_1(x,t)&=X(\sigma(x,t)),\label{rp1}\\
    p_2(x,t)&=u(0, \sigma(x,t)),\label{rp2}\\
    p_3(x,t)&= \int_{\sigma(x,t)-\tau}^{\sigma(x,t)} X(\eta)\,d\eta. \label{rp3}
\end{align}
These are obtained by inverting the nonlinear coupled  PDE–ODE system  \eqref{equation X}–\eqref{boundary condition} along the characteristics, and each {plays a distinct role} in the feedback law:
\begin{itemize}
    \item 
the state  $p_1$  predicts the ODE state to compensate for the input delay; \item the  state $p_2$ reconstructs the effective control signal arriving at the boundary to cancel the influence of the source term, $g(x)u(0,t)$;
 \item the state $p_3$ captures the actuator dynamics and enables the transformation of time-domain predictions into spatial integrals computed along the characteristic curves of the transport PDE.
\end{itemize}
\begin{rem}
    The resulting boundary control law \eqref{boundary control} combines the nominal delay-free feedback $\kappa$ evaluated at $p_1(D,t)$, with an integral correction term that accounts for the re-entrant source effect, based on $p_2$. 
In the special case where $g \equiv 0$ and {$\lambda=v$ is a constant}, the {prediction instant} simplifies to \textcolor{black}{$\sigma(x,t) = t + x/v$}, and the predictor-feedback structure reduces to a classical finite-dimensional delay compensator such as the Smith predictor or the Artstein transformation. In a broad sense, our designs remain valid for PDEs like \eqref{equation u}, \eqref{boundary condition} with time-dependent or spatially-varying transport speed. In such cases, the variation in $\lambda$ alters the mapping $\sigma(x,t)$ and the resulting effective delay {$\phi(0,t)$}, but the  structure of the control law remains unchanged. 
\end{rem}

	\section{Generalization to nonlinear source terms and boundary flux control}\label{sec3}

In this section, we pursue a broader generalization by extending our design to systems with inflow boundary control and outflow anti-collocated boundary measurements. This configuration is not only practically relevant but also introduces significant challenges, as both the input and output are nonlinear functions defined by the product of the state integral and the PDE’s boundary value. Instability in the ODE dynamics inevitably manifests through the nonlinearly defined boundary conditions. Furthermore, we incorporate friction-type terms and nonlinear, spatially dependent recycling functions as additive source terms. Consider the following system
\begin{align}
\dot{X}(t) &= f(X(t), q(t)), \label{M1}\\
q(t)&=\lambda\left( \int_{t-\tau}^{t} X(s)\,ds \right) u(0,t),  \label{M2}\\
	\partial_t u(x,t) &= \lambda\left( \int_{t-\tau}^{t} X(s)\,ds \right)\, \partial_x u(x,t) \notag \\&~~~~~~+ g(x, u(0,t)) + c(x)u(x,t), \label{M3}\\
    U(t)&=\lambda\left( \int_{t-\tau}^{t} X(s)\,ds \right) u(D,t), \label{M4}
\end{align}
where  $c : [0,D] \to \mathbb{R}$ is a bounded, smooth coefficient function. We state the following standard regularity conditions on the nonlinear source term $g(x,v)$.

\begin{assumption}\label{assump3}
The function $g : [0,D] \times \mathbb{R} \to \mathbb{R}$ is continuously differentiable. It is also uniformly Lipschitz continuous with respect to its second argument, i.e., there exists a constant $L_g > 0$ such that for all $x \in [0,D]$ and any $v_1, v_2 \in \mathbb{R}$:
\begin{equation}
|g(x, v_1) - g(x, v_2)| \leq L_g |v_1 - v_2|.
\end{equation}
Furthermore, reflecting the physical nature of many recycling systems, we assume that 
\begin{equation}
    g(x,0)=0, \quad \text{for all } x \in [0,D].
\end{equation}
\end{assumption}
{
\begin{rem}[On the Structure of the Source Terms]
The structure of the source terms in \eqref{M3} is key to the control design. Their independence from the ODE state 
\textcolor{black}{$X$} prevents a destabilizing feedback loop that could violate forward completeness (Assumption~\ref{Assumption 1}). Additionally, the linear friction term in 
\textcolor{black}{$
u$} ensures that the backstepping kernel \eqref{MF7} admits an explicit exponential form; a nonlinear friction term  requires solving an intractable nonlinear PDE.
\end{rem}
}


The predictor-feedback control law for the plant \eqref{M1}--\eqref{M4} is designed as follows:
\begin{align}
 U(t)&=\frac{\lambda(p_3(0,t))\, \kappa(p_1(D,t))}{\lambda(p_3(D,t)) \,       K(D,D,t) }   \notag \\
&~~~~~ -\int_0^D \frac{\lambda(p_3(0,t))}{K(D,y,t)}\,  \frac{g(D-y,  \, p_2(y,t) )} {\lambda(p_3(y,t))}  dy,  \label{MF1}  \\
	p_1(x,t) &= X(t) + \int_0^x \frac{f(p_1(y,t),\, \lambda(p_3(y,t)) p_2(y,t))}{\lambda(p_3(y,t))} \, dy, \label{MF3}   \\  
	p_2(x,t) &= K(x,x,t) \cdot u(x,t) 
	 \notag \\  
	&~~~~+ \int_0^x \frac{K(x,x,t)}{K(x,y,t)} \cdot \frac{g(x-y,\,p_2(y,t))}{\lambda(p_3(y,t))} \, dy, \label{MF4} \\  
	p_3(x,t) &= \int_{t-\tau}^t \gamma(s, \sigma(x,t))\, X(s)\, ds 
	 \notag \\
	&~~~~ + \int_0^x \gamma(\sigma(y,t), \sigma(x,t)) \cdot \frac{p_1(y,t)}{\lambda(p_3(y,t))} \, dy, \label{MF5} \\  
	\sigma(x,t) &= t + \int_0^x \frac{1}{\lambda(p_3(y,t))} \, dy,  \label{MF6}
\end{align}
where the weight function $\gamma$ is still defined by \eqref{W} and the kernel  is  defined by
\begin{align}
	K(x_1,x_2,t) := \exp\left( \int_0^{x_2} \frac{c(x_1 - z)}{\lambda(p_3(z,t))} \, dz \right) \label{MF7}
\end{align}
 on the domain $\mathcal{T}:=
\left\{(x_1,x_2,t),   0\leq x_2 \leq x_1 \leq D, \,t\geq 0 \right\}$.

\begin{figure}[t]
  \centering
  \includegraphics[width=0.8\linewidth]{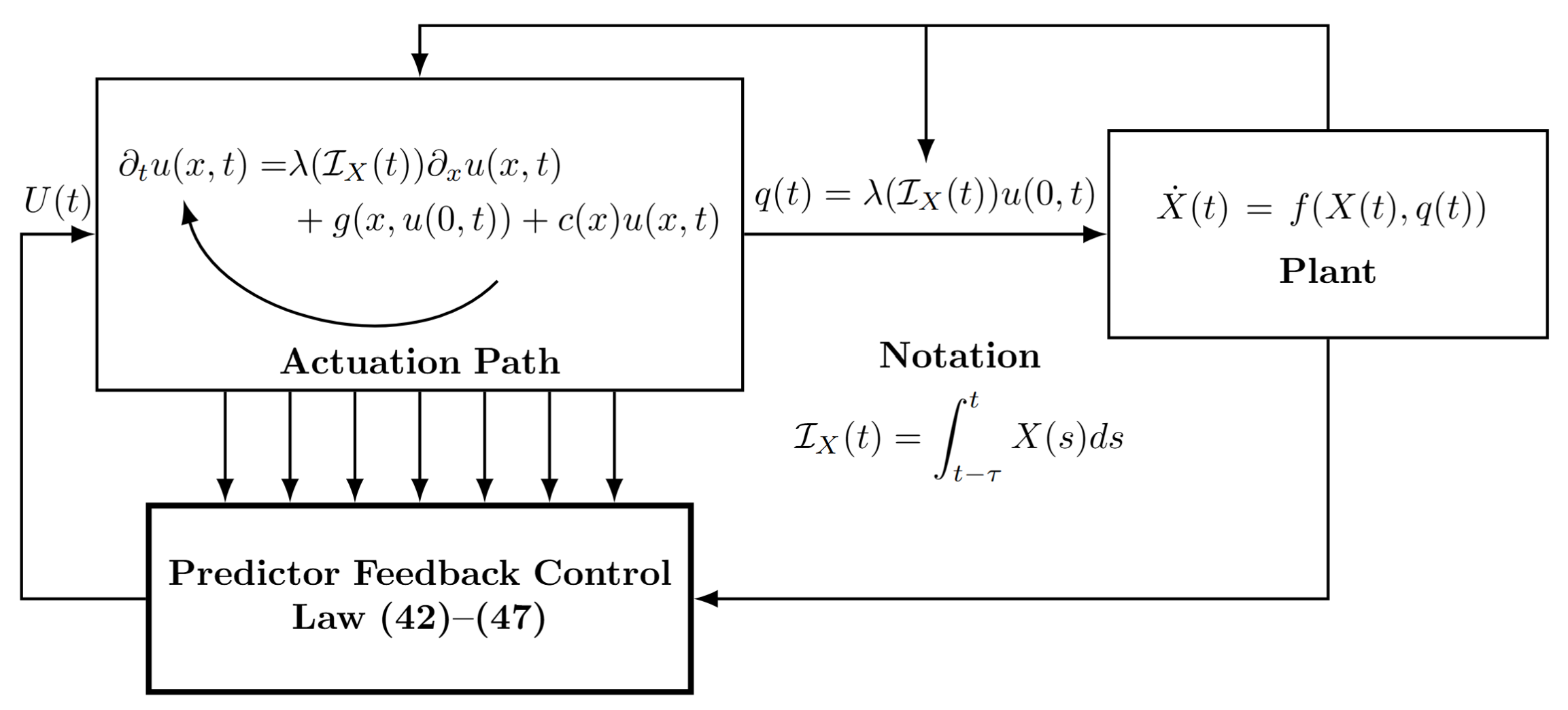}
  \caption{Schematic of the closed-loop system }
  \label{fig:characteristic-line}
\end{figure}

{Under the layer predictor feedback control law \eqref{MF1}--\eqref{MF7},} we state the following stability theorem.
\begin{thm}\label{thm3}
	Under Assumptions \ref{Assumption 1}--\ref{assump3}, for all initial conditions $h \in C^0([-\tau,0]) $ and Lipschitz continuous  $u_0$ 
	on $[0,D]$ that satisfies the compatibility condition
	\begin{align}\label{nu0}
		u_0(D)=&\frac{1}{K({D},{D},0)} \frac{\kappa(p_1(D,0))}{\lambda(p_3(D,0))} \notag \\\
    & -\int_0^{{D}} \frac{1}{K({D},y,0)} \frac{g({D}-y,\, p_2(y,0))}{\lambda(p_3(y,0))}  dy,
	\end{align}
	{there exists a unique \textcolor{black}{strong} solution $\left(X,u\right),$} where $X \in  C^1 ([0,\infty))$ and \textcolor{black}{$u$} is locally
	Lipschitz on $[0,D] \times [0,\infty),$ to the
	closed-loop system \eqref{M1}--\eqref{M4} with the control law \eqref{MF1}--\eqref{MF7}. Moreover, there exists a class $\mathcal{KL}$
	function {$\mathcal{L}_2$} such that the following holds:
	\begin{align} \label{non-sta}
	   	\sup_{s\in [t-\tau,t]}& \big|X(s)\big|+\sup_{x\in [0,D]}\big|u(x,t)\big| \notag \\
        &\leq {\mathcal{L}_2}\left(\sup_{s\in [-\tau,0]}\big|h(s)\big|+\sup_{x\in [0,D]}\big|u_0(x)\big|, \,t \right)  
	\end{align}
	for all $ t \geq 0$.
\end{thm}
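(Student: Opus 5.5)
The plan is to follow the same backstepping route as in Section \ref{section 2}: transform the closed loop \eqref{M1}--\eqref{M4}, \eqref{MF1}--\eqref{MF7} into a homogeneous target system and analyse that system along characteristics. First I will establish the analogues of Lemmas \ref{lemma 1}--\ref{lemma 2}. For the closed-loop system, set $\Lambda(t)=\lambda(\int_{t-\tau}^t X)$ and define
\[
w(x,t)=K(x,x,t)u(x,t)-\frac{\kappa(p_1(x,t))}{\lambda(p_3(x,t))}+\int_0^x \frac{K(x,x,t)}{K(x,y,t)}\frac{g(x-y,p_2(y,t))}{\lambda(p_3(y,t))}dy .
\]
By construction $p_3(0,t)=\int_{t-\tau}^t X$, so $\lambda(p_3(0,t))=\Lambda(t)$. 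Using the characteristic interpretation $p_1(x,t)=X(\sigma(x,t))$, $p_3(x,t)=\int_{\sigma-\tau}^{\sigma}X$, and $\lambda(p_3(x,t))p_2(x,t)=q(\sigma(x,t))$, I will check that $\partial_t p_i=\Lambda(t)\partial_x p_i$ and $\partial_t\sigma=\Lambda\partial_x\sigma$. The kernel $K$ is chosen so that $\partial_t K-\Lambda\partial_x K$ cancels the friction term $c(x)u$. The expected target system is
\[
\dot X=f\bigl(X,\kappa(X)+\Lambda(t)w(0,t)\bigr),\qquad \partial_t w=\Lambda\partial_x w,\qquad w(D,t)=0.
\]
The boundary condition $w(D,t)=0$ follows from \eqref{MF1} after dividing by $\Lambda(t)$, and its compatibility at $t=0$ is exactly \eqref{nu0}. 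The inverse transformation replaces $p_2$ by $\pi_2=w+\kappa(\pi_1)/\lambda(\pi_3)$ and keeps $\pi_1,\pi_3,\overline\sigma$ as in Lemma \ref{lemma 2}. Solving for $u$ is then explicit, because $K(x,x,t)>0$ is bounded above and below via Assumption \ref{Assumption 3} and the boundedness of $c$.

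Next come the norm equivalences, the analogues of Lemmas \ref{lemma 3}--\ref{lemma 4}, where the state norm includes $\sup_{[t-\tau,t]}|X|$.
\begin{itemize}
\item For $p_1,p_3$ I use Assumption \ref{Assumption 1} through \eqref{equation 7}--\eqref{equation 8} and a comparison argument in the spatial variable $x$, with Gronwall in $\Theta(p_1)$. This bounds $p_1$ by a $\mathcal K_\infty$ function of $|X(t)|+\sup|\Lambda p_2|$.
\item For $p_2$, the equation \eqref{MF4} is a linear Volterra inequality because of Assumption \ref{assump3}: $|g(x,v)|\le L_g|v|$ and $\lambda\ge\underline\lambda$. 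Gronwall then gives $\sup|p_2|\le C\sup|u|$ with $C$ depending only on $L_g,\underline\lambda,D,\|c\|_\infty$.
\end{itemize}
This is the point of the ordering: bound $p_2$ first, then $p_1$, then $p_3$. The reverse direction is analogous and uses the target-system variables.

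Then I will prove stability of the target system. The solution is $w(x,t)=w(\zeta(\cdot),\cdot)$ transported along characteristics. Since $\Lambda\ge\underline\lambda$, we get $w(\cdot,t)\equiv0$ for $t\ge D/\underline\lambda$, and $\sup|w(\cdot,t)|\le\sup|w(\cdot,0)|$ before that. The ISS estimate \eqref{ISSE} with input $\Lambda w(0,t)$, bounded by $\bar\lambda\sup|w_0|$, together with the vanishing of that input after finite time, yields a $\mathcal{KL}$ bound on $X$. Over the window $[t-\tau,t]$ this is handled by shifting time by $\tau$ and using strong forward completeness on $[0,\tau]$. Composing with the norm equivalences gives \eqref{non-sta}; the initial sup of $w$ is controlled by $\sup|h|+\sup|u_0|$ through $p_3(\cdot,0)$.

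I expect well-posedness to be the main obstacle. Existence and uniqueness for the coupled fixed-point system \eqref{MF3}--\eqref{MF7} at each $t$ is nontrivial because $\gamma$ is a discontinuous weight evaluated at $\sigma$, which itself depends on $p_3$. The closed loop also has to be constructed simultaneously. I would first solve the target system, which is an ODE driven by a transported, explicitly known $w$, and obtain $X\in C^1$. Then I would solve the inverse predictor equations by a contraction on short spatial intervals. The indicator $\gamma$ is handled by noting that $p_3(x,t)=\int_{\sigma-\tau}^{\sigma}\tilde X$, where $\tilde X$ concatenates the history of $X$ with the $p_1$-values. This makes $p_3$ Lipschitz in the unknowns, with a constant depending on $\underline\lambda$. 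Finally I recover $u$ via the inverse transformation, which is locally Lipschitz since $u_0$ is, and uniqueness follows from the bijectivity of the transformation.
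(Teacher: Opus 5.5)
Your overall route matches the paper's. Your $w$ is the paper's transformation \eqref{nonlinear backstepping} divided by $\lambda(p_3(x,t))$, which is why the ODE input carries $\Lambda(t)$. The remaining steps also match: the explicit inverse via $\pi_2$, the sup-norm equivalences, finite-time extinction of $w$ plus ISS, and ``target system first'' well-posedness. Checking the transport property through the characteristic representation is a legitimate alternative to the paper's direct differentiation. Using the uniform extinction time $D/\underline\lambda$ is cleaner than the paper's data-dependent $\sigma(D,0)$. The forward estimate (first $p_2$ by Gronwall, then $p_1$ via Assumption~\ref{Assumption 1}) is fine.

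\textbf{Gap: the reverse norm estimate.} The step that fails as written is ``the reverse direction is analogous.'' In the inverse, $\pi_2=w+\kappa(\pi_1)/\lambda(\pi_3)$ depends on $\pi_1$, so you cannot bound $\pi_2$ first. Moreover, $\pi_1$ solves $\partial_x\pi_1=f\bigl(\pi_1,\kappa(\pi_1)+\lambda(\pi_3)w\bigr)/\lambda(\pi_3)$, whose input contains $\kappa(\pi_1)$. Assumption~\ref{Assumption 1} then only gives $\partial_x\Theta(\pi_1)\le\bigl(\Theta(\pi_1)+\mathcal{G}_3(\rho(|\pi_1|)+\bar\lambda|w|)\bigr)/\underline\lambda$. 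This is superlinear in $\Theta$ in general, so a Gronwall or comparison argument can escape before $x=D$. The paper's proof of Lemma~\ref{lemma 12} takes this route and simply asserts that $\dot z=W(z)+C$ has a solution on $[0,D]$, so following it does not fix the problem.

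The missing ingredient is Assumption~\ref{Assumption 2}. Set $s(x)=\int_0^x dy/\lambda(\pi_3(y,t))\le D/\underline\lambda$. Then $Y(s(x))=\pi_1(x,t)$ solves $Y'=f(Y,\kappa(Y)+\omega)$ with $|\omega|\le\bar\lambda\sup_x|w(x,t)|$ (extend $\omega$ by zero). ISS then gives $\sup_x|\pi_1(x,t)|\le\eta(|X(t)|,0)+\psi(\bar\lambda\sup_x|w(x,t)|)$, after which $\pi_2$ and $u$ are bounded directly.

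This bound is also what lets your short-interval contraction for the inverse predictor continue to all of $[0,D]$. Alternatively, use the paper's explicit formulas $\overline\sigma=\xi^{-1}(x+\xi(t))$, $\pi_1=X(\overline\sigma)$, $\pi_3=\int_{\overline\sigma-\tau}^{\overline\sigma}X$.

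Separately, the target system is not an ODE driven by a known $w$. Its boundary value is $w(0,t)=w_0(\xi(t))$ (zero once $\xi(t)\ge D$), with $\xi(t)=\int_0^t\Lambda$ depending on the history of $X$. You must therefore solve a retarded system for $(X,\xi)$. Its right-hand side is locally Lipschitz because $w_0$ is Lipschitz with $w_0(D)=0$, and solutions are global by the ISS bound. This is exactly the fixed-point step the paper borrows from its earlier work.
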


The proof of Theorem~\ref{thm3} is mainly related to the following lemmas. Indeed, the closed-loop system \eqref{M1}--\eqref{M4} under feedback control law \eqref{MF1}--\eqref{MF7}}  is mapped into a desired  target system  via an infinite-dimensional backstepping transformation as follows.

\begin{lem}[Backstepping Transformation]\label{lemma 9}
	The control law defined by  \eqref{MF1}--\eqref{MF7}, together
	with the following infinite-dimensional backstepping transformation
\begin{align}\label{nonlinear backstepping}&w(x,t)=\lambda(p_3(x,t))K(x,x,t) u(x,t)-\kappa(p_1(x,t)) \notag \\
    & +\lambda(p_3(x,t))\int_0^x \frac{K(x,x,t)}{K(x,y,t)}\cdot \frac{g(x-y,\, p_2(y,t))}{\lambda(p_3(y,t))}  dy,
	\end{align}
	maps the system \eqref{M1}--\eqref{M4} into the following target system
	\begin{align}
\dot{X}&=f(X(t),\kappa(X(t))+w(0,t)), \label{non tar 1}\\
		\partial_t w(x,t)&=\lambda\left(\int_{t-\tau }^{t}X(s)ds\right)\partial_x w(x,t), \label{non tar 2}\\
		w(D,t)&=0 \label{non tar 3}.
        \end{align}
\end{lem}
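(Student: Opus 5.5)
The proof consists of checking the three equations of the target system \eqref{non tar 1}--\eqref{non tar 3} in turn. The two boundary relations are algebraic. The interior transport equation requires showing that every predictor quantity is itself transported with speed $\lambda_0(t):=\lambda\bigl(\int_{t-\tau}^t X(s)ds\bigr)$.

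\emph{Boundary relations.} At $x=0$ the integral equations \eqref{MF3}--\eqref{MF6} give $p_1(0,t)=X(t)$, $\sigma(0,t)=t$ and $p_3(0,t)=\int_{t-\tau}^t X(s)ds$. Moreover $K(0,0,t)=1$ by \eqref{MF7}, and the integral in \eqref{nonlinear backstepping} vanishes. Hence $w(0,t)=\lambda_0(t)u(0,t)-\kappa(X(t))=q(t)-\kappa(X(t))$ by \eqref{M2}. Substituting into \eqref{M1} gives \eqref{non tar 1}.

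At $x=D$, \eqref{M4} together with $p_3(0,t)=\int_{t-\tau}^tX$ gives $U(t)=\lambda(p_3(0,t))u(D,t)$. Dividing \eqref{MF1} by $\lambda(p_3(0,t))$ therefore yields
\[
u(D,t)=\frac{\kappa(p_1(D,t))}{\lambda(p_3(D,t))K(D,D,t)}-\int_0^D\frac{g(D-y,p_2(y,t))}{K(D,y,t)\lambda(p_3(y,t))}dy .
\]
Multiplying by $\lambda(p_3(D,t))K(D,D,t)$ and inserting the result into \eqref{nonlinear backstepping} at $x=D$ gives $w(D,t)=0$.

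\emph{Interior PDE, Step 1: rewrite $p_3$ without $\gamma$.} We need $\partial_t w=\lambda_0\partial_x w$. The first move is to establish the transport identities
\[
\partial_t p_i=\lambda_0(t)\,\partial_x p_i,\quad i=1,2,3,\qquad \partial_t\sigma=\lambda_0(t)\,\partial_x\sigma .
\]
The discontinuous weight $\gamma$ in \eqref{MF5} makes direct differentiation awkward. To avoid this I would change variables with $s=\sigma(y,t)$, which is strictly increasing in $y$ because $\lambda\ge\underline\lambda>0$ by Assumption \ref{Assumption 3}. Define the extended trajectory $\hat X(s;t)$ to equal $X(s)$ for $s\le t$, and to equal $p_1(\sigma^{-1}(s,t),t)$ for $t\le s\le\sigma(D,t)$. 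Then \eqref{MF5} becomes $p_3(x,t)=\int_{\sigma(x,t)-\tau}^{\sigma(x,t)}\hat X(s;t)ds$, which is a smooth moving-window expression.

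\emph{Interior PDE, Step 2: transport of $\sigma$ and $p_1,p_3$.} Differentiate \eqref{MF3}, \eqref{MF5} and \eqref{MF6} in $t$ and in $x$. Set $P_i:=\partial_t p_i-\lambda_0\partial_x p_i$ and $S:=\partial_t\sigma-\lambda_0\partial_x\sigma$. These defects satisfy a coupled system of linear homogeneous Volterra integral equations in $x$. The inhomogeneous parts cancel because, at $x=0$, $\partial_t X=f(X,q)$ matches $\lambda_0\partial_xp_1(0,t)=f(p_1,\lambda p_3\,p_2)|_{x=0}$. This matching uses $p_2(0,t)=u(0,t)$ and $q=\lambda_0u(0,t)$. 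A Gronwall argument then forces $P_i\equiv0$ and $S\equiv0$.

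\emph{Interior PDE, Step 3: $p_2$ and the kernel $K$.} For $p_2$ I would use \eqref{M3} for $\partial_tu$ together with the kernel identities obtained from \eqref{MF7}:
\[
\partial_{x_2}K=\frac{c(x_1-x_2)}{\lambda(p_3(x_2,t))}K,\qquad \partial_tK=\lambda_0(\partial_{x_1}+\partial_{x_2})K-c(x_1)K .
\]
The first is immediate. The second follows from $\partial_t p_3=\lambda_0\partial_xp_3$, after integrating by parts in $z$. The term $-c(x_1)K$ exactly absorbs the friction term $c(x)u$. Likewise, the recycle term $g(x,u(0,t))=g(x,p_2(0,t))$ is absorbed by differentiating the upper limit and the argument $x-y$ of the convolution integral in \eqref{MF4}.

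\emph{Interior PDE, Step 4: conclusion.} Once all the transport identities hold, apply the operator $\partial_t-\lambda_0\partial_x$ to \eqref{nonlinear backstepping}. The factor $\lambda(p_3)$, the term $\kappa(p_1)$ and the integral term each commute with this operator up to terms that cancel pairwise, and this yields \eqref{non tar 2}.

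I expect the main obstacle to be Step 2 for $p_3$. The endpoint terms of the window, $\hat X(\sigma)-\hat X(\sigma-\tau)$, have to be matched between the $t$- and $x$-derivatives, including the case where $\sigma(x,t)-\tau$ crosses $t$ and the definition of $\hat X$ switches branches. The fact that $\gamma$ jumps exactly at that switch must be handled through the continuity of $\hat X$ at $s=t$, which holds because $p_1(0,t)=X(t)$. I would first check this computation carefully in the smooth setting, and then extend it to the Lipschitz regularity of $u_0$ in the sense of strong solutions.
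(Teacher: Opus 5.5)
Your proposal is correct in substance and follows the paper's route. The boundary relations come from direct substitution at $x=0$ and $x=D$. The interior equation comes from showing that the defects $(\partial_t-\lambda_0\partial_x)$ of $(p_1,p_2,p_3,\sigma)$ satisfy a homogeneous linear Volterra system in $x$ that vanishes at $x=0$. Your rewriting $p_3(x,t)=\int_{\sigma(x,t)-\tau}^{\sigma(x,t)}\hat X(s;t)\,ds$ is more careful than the paper, which formally differentiates the indicator $\gamma$ (the $\gamma_1,\gamma_2$ terms in \eqref{bp3t}--\eqref{bp3x}). Your version yields the same endpoint contributions rigorously, and the branch switch at $s=t$ is absorbed by $p_1(0,t)=X(t)$.

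Two things need repair. First, as sequenced, Steps 2 and 3 are circular. Since \eqref{MF3} depends on $p_2$, the defect equation for $p_1$ contains $\int_0^x f_2\,P_2\,dy$, so Step 2 cannot close without $P_2$. Step 3, in turn, derives the $K$-identity by assuming $\partial_tp_3=\lambda_0\partial_xp_3$. Keep the defect instead:
\begin{align*}
\partial_tK&=\lambda_0(\partial_{x_1}+\partial_{x_2})K-c(x_1)K\\
&\quad-K\int_0^{x_2}\frac{c(x_1-z)\,\nabla\lambda(p_3(z,t))\cdot P_3(z,t)}{\lambda^2(p_3(z,t))}\,dz.
\end{align*}
Then the $P_2$-equation is linear in $(P_2,P_3)$, and all four defects form one Volterra system, exactly as the paper's $\mathbf{Z}$-system does.

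Second, Step 4 needs no new computation. Multiplying \eqref{MF4} by $\lambda(p_3(x,t))$ gives $w=\lambda(p_3)p_2-\kappa(p_1)$, which is transported as soon as $p_1,p_2,p_3$ are.
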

\begin{lem}[Inverse Backstepping Transformation]\label{lemma 10}
	The inverse of the infinite-dimensional backstepping transformation \eqref{nonlinear backstepping}, which maps the system \eqref{non tar 1}--\eqref{non tar 3} to the closed-loop system \eqref{M1}--{\eqref{M4}},  is given by
    \begin{align}\label{non inverse backstepping}
&u(x,t)=\frac{L(x,x,t)}{\lambda(\pi_3(x,t))} \left(w(x,t)+\kappa(\pi_1(x,t))\right)\notag \\
    &~~~~~~~  -\int_0^x L(x,y,t) \frac{g(x-y, \, \pi_2(y,t))}{\lambda(\pi_3(y,t))}  dy ,
	\end{align}
	where $\pi_i, \, i=1,2,3$ are defined as the solutions to the following coupled integral equations
    \begin{align}
	&\pi_1(x,t)\!\! =\! X(t)\! +\! \int_0^x \frac{f(\pi_1(y,t),\, \lambda(\pi_3(y,t)) \pi_2(y,t))}{\lambda(\pi_3(y,t))} \, dy,   \label{npi1}\\  
	&\pi_2(x,t) = \frac{w(x,t)+\kappa(\pi_1(x,t))}{\lambda(\pi_3(x,t))} , \label{npi2} \\  
	&\pi_3(x,t) = \int_{t-\tau}^t \gamma(s, \overline{\sigma}(x,t))\, X(s)\, ds 
	 \notag \\
	&\quad + \int_0^x \gamma(\overline{\sigma}(y,t), \overline{\sigma}(x,t)) \cdot \frac{\pi_1(y,t)}{\lambda(\pi_3(y,t))} \, dy, \label{npi3} \\  
	&\overline{\sigma}(x,t) = t + \int_0^x \frac{1}{\lambda(\pi_3(y,t))} \, dy,  \label{npi4}
\end{align}
with the kernel function defined on the domain $\mathcal{T}$ as
\begin{align}\label{kernel L}
	L(x_1,x_2,t) := \exp\left( -\int_0^{x_2} \frac{c(x_1 - z)}{\lambda(\pi_3(z,t))} \, dz \right). 
\end{align}
\end{lem}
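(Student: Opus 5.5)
The approach is to read the forward transformation \eqref{nonlinear backstepping} as a definition of $w$ in terms of $(X,u)$, and then check that the proposed $u$ in \eqref{non inverse backstepping}, built from $\pi_i$, reproduces it. Concretely, I will show that, for $w$ given by \eqref{nonlinear backstepping}, the forward and backward predictor states coincide:
\begin{align*}
\pi_1&=p_1, & \pi_3&=p_3, & \overline{\sigma}&=\sigma, & \pi_2&=K(x,x,t)u+\int_0^x\frac{K(x,x,t)}{K(x,y,t)}\frac{g(x-y,p_2(y,t))}{\lambda(p_3(y,t))}dy=p_2 .
\end{align*}
It then follows that $L(x_1,x_2,t)=1/K(x_1,x_2,t)$, and the inverse formula can be solved algebraically.

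\textbf{Step 1 (consistency of predictors).} Assume $(p_1,p_2,p_3,\sigma)$ solve \eqref{MF3}--\eqref{MF6}. From \eqref{nonlinear backstepping} we get
\[
\frac{w(x,t)+\kappa(p_1(x,t))}{\lambda(p_3(x,t))}=K(x,x,t)u(x,t)+\int_0^x \frac{K(x,x,t)}{K(x,y,t)}\frac{g(x-y,p_2(y,t))}{\lambda(p_3(y,t))}\,dy=p_2(x,t),
\]
where the last equality is \eqref{MF4}. Hence $(p_1,p_2,p_3,\sigma)$ satisfy \eqref{npi1}--\eqref{npi4}, since \eqref{MF3}, \eqref{MF5}, \eqref{MF6} are literally \eqref{npi1}, \eqref{npi3}, \eqref{npi4}.

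To conclude $\pi_i=p_i$, I need uniqueness of solutions of the coupled system \eqref{npi1}--\eqref{npi4}. I plan to get it by a Volterra/Gronwall argument in $x$. The integrands are locally Lipschitz: $f$ is $C^1$, $\lambda$ is $C^1$ with $\lambda\ge\underline\lambda$, and $\kappa$ is $C^1$. Two candidate solutions stay bounded on $[0,D]$, so local Lipschitz constants suffice.

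\textbf{Main obstacle.} The discontinuous weight $\gamma$ in \eqref{npi3} is the hard part. I will handle it by rewriting $\pi_3$ as a time integral over $[\overline\sigma(x,t)-\tau,\overline\sigma(x,t)]$ of the extended trajectory. This uses the change of variable $s=\overline\sigma(y,t)$, $ds=dy/\lambda(\pi_3(y,t))$, which is monotone since $\lambda>0$, with the extension defined by $X$ on $[t-\tau,t]$ and by $\pi_1(\overline\sigma^{-1}(s),t)$ beyond. The difference of two such window integrals is then bounded by $\bar\ \cdot|\overline\sigma-\overline\sigma'|$ times sup bounds, plus integrated differences of $\pi_1$. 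Since $|\overline\sigma-\overline\sigma'|\le \underline\lambda^{-2}L_\lambda\int_0^x|\pi_3-\pi_3'|dy$, Gronwall closes.

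\textbf{Step 2 (kernel identity).} With $\pi_3=p_3$, \eqref{kernel L} and \eqref{MF7} give $L=1/K$ on $\mathcal T$. I then solve \eqref{nonlinear backstepping} for $u$:
\[
u(x,t)=\frac{w+\kappa(p_1)}{\lambda(p_3)K(x,x,t)}-\int_0^x\frac{1}{K(x,y,t)}\frac{g(x-y,p_2(y,t))}{\lambda(p_3(y,t))}dy .
\]
Substituting $p_i=\pi_i$ and $1/K=L$ yields exactly \eqref{non inverse backstepping}.

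\textbf{Step 3 (converse direction).} Conversely, given $(X,w)$, let $\pi_i$ solve \eqref{npi1}--\eqref{npi4}. Existence is to be taken from the well-posedness argument (fixed point) used for the closed loop. Define $u$ by \eqref{non inverse backstepping} and set $p_i:=\pi_i$. Then \eqref{non inverse backstepping} rearranges to $\pi_2=K u+\int_0^x (K(x,x,t)/K(x,y,t))\,g(x-y,\pi_2)/\lambda(\pi_3)\,dy$, i.e. \eqref{MF4}. By uniqueness of \eqref{MF3}--\eqref{MF6}, shown by the same Gronwall argument, the forward transformation applied to this $u$ returns $w$. The two maps are therefore mutual inverses. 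Combined with Lemma \ref{lemma 9}, this shows that \eqref{non inverse backstepping} maps the target system \eqref{non tar 1}--\eqref{non tar 3} back to \eqref{M1}--\eqref{M4}.
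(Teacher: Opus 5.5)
Your Steps 1--2, and the first half of Step 3, are correct. They give a genuinely different and more elementary route to $p_i=\pi_i$, $\sigma=\overline\sigma$ than the paper's. You freeze $t$ and use Volterra/Gronwall uniqueness. The paper instead shows that both families solve the same transport equation with the same data at $x=0$, which already presupposes the closed-loop dynamics. One small point on your Gronwall step: beyond $t$ the window integrands are $\pi_1\circ\overline\sigma^{-1}$ and $\pi_1'\circ(\overline\sigma')^{-1}$. It is cleaner to stay in the $y$-variable and bound the length of the set where the two indicators $\gamma$ differ by $C\int_0^x|\pi_3-\pi_3'|\,dy$; the estimate then closes.

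The gap is the final sentence of Step 3. Lemma~\ref{lemma 9} only maps closed-loop solutions to target solutions. Combined with a pointwise-in-$t$ bijection, it yields the converse only if every target solution is the image of some closed-loop solution. That requires existence for \eqref{M1}--\eqref{M4} under \eqref{MF1}--\eqref{MF7} to be known already. In this paper, that existence is produced in Appendix~\ref{Appendix B} precisely from the present lemma, so the argument is circular. Tellingly, you never use \eqref{non tar 2}, and without it \eqref{M3} cannot hold.

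The ODE and the boundary condition do follow pointwise from your identity:
\begin{itemize}
\item At $x=0$, the facts $K(0,0,t)=1$, $p_1(0,t)=X(t)$ and $p_3(0,t)=\int_{t-\tau}^tX(s)\,ds$ give $\kappa(X(t))+w(0,t)=q(t)$. Hence \eqref{non tar 1} is exactly \eqref{M1}--\eqref{M2}.
\item At $x=D$, the condition $w(D,t)=0$ shows that $\lambda\bigl(\int_{t-\tau}^tX(s)\,ds\bigr)u(D,t)$ equals the right-hand side of \eqref{MF1}, which is \eqref{M4}.
\end{itemize}
The PDE \eqref{M3}, however, needs a differential argument. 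That is the heart of the paper's proof: it shows that $\pi_i$ and $\overline\sigma$ are transported, then differentiates \eqref{non inverse backstepping} using $\partial_tL$ and $\partial_xL$.

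Within your framework the repair is short.
\begin{itemize}
\item First show, using \eqref{non tar 1}--\eqref{non tar 2}, that $\pi_1,\pi_3,\overline\sigma$ are transported, e.g.\ via $\overline\sigma(x,t)=\xi^{-1}(x+\xi(t))$.
\item Then $\pi_2=(w+\kappa(\pi_1))/\lambda(\pi_3)$ is transported as well, because $w$ is.
\item Since $p_2=\pi_2$ and $p_3=\pi_3$, apply $\partial_t-\lambda\partial_x$ (with $\lambda=\lambda(p_3(0,t))$) to \eqref{MF4}. First substitute $s=x-y$, so that $x$ enters the integrand only through $p_2(x-s,t)$ and $p_3(x-s',t)$.
\item The only surviving terms are $-c(x)K(x,x,t)u$, coming from $K(x,x,t)$, and the boundary term $-K(x,x,t)g(x,u(0,t))$, using $p_2(0,t)=u(0,t)$.
\item This gives $0=K(x,x,t)\bigl[\partial_tu-\lambda\partial_xu-c(x)u-g(x,u(0,t))\bigr]$, which is \eqref{M3} because $K>0$.
\end{itemize}
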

{
\begin{rem}
The nonlinear backstepping transformations \eqref{nonlinear backstepping}  generalize \eqref{backstepping transformation}  in two key ways. They incorporate the predicted propagation speed $\lambda(p_3(x,t))$  to account for the flux-based system \eqref{M1}--\eqref{M4}, and use a state-dependent exponential kernel \textcolor{black}{$K$} to cancel the friction term \textcolor{black}{$c\cdot u$} that is absent in \eqref{equation u}. Therefore, the resulting Volterra-type transformation \eqref{nonlinear backstepping}  preserve a homogeneous transport structure in the target system \eqref{non tar 1}--\eqref{non tar 3}.
\end{rem}
}

Next, we establish the norm equivalence between the original system and that of the target system.
\begin{lem}[Norm Equivalence] \label{lemma 11}
	There exists a class $\mathcal{K}_{\infty}$ function {$\mathcal{K}_3$} such that
	\begin{equation}\label{norm 5}
		\sup_{x\in [0,D]}\big|w(x,t)\big|\leq {\mathcal{K}_3}\left( \big|X(t)\big|+\sup_{x\in [0,D]}\big|u(x,t)\big|\right)
	\end{equation}
	for all $t\geq 0$.
\end{lem}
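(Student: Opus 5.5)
The plan is to bound each of the three terms in the transformation \eqref{nonlinear backstepping} separately. The history-dependent layer $p_3$ will only ever enter through $\lambda(p_3)$, and Assumption~\ref{Assumption 3} makes that quantity uniformly bounded above and below. I take for granted that the coupled integral equations \eqref{MF3}--\eqref{MF7} have a solution for each fixed $t$; this is part of the well-posedness statement in Theorem~\ref{thm3}. All estimates are made at a fixed $t$, pointwise in $x\in[0,D]$.

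\textbf{Step 1 (kernel bounds).} By Assumption~\ref{Assumption 3}, $\underline\lambda\le\lambda(p_3(z,t))\le\bar\lambda$. Since $c$ is bounded, \eqref{MF7} gives
\begin{equation*}
K_{\min}:=e^{-D\|c\|_\infty/\underline\lambda}\le K(x_1,x_2,t)\le e^{D\|c\|_\infty/\underline\lambda}=:K_{\max}
\end{equation*}
on $\mathcal{T}$. These constants are uniform in $t$ and in the state. The ratio $K(x,x,t)/K(x,y,t)$ is therefore bounded by $K_{\max}/K_{\min}$.

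\textbf{Step 2 (linear bound on $p_2$).} Take absolute values in \eqref{MF4}. Assumption~\ref{assump3} gives $|g(x-y,p_2)|\le L_g|p_2|$, because $g(\cdot,0)=0$. This yields
\begin{equation*}
|p_2(x,t)|\le K_{\max}\sup_{[0,D]}|u(\cdot,t)|+\frac{K_{\max}L_g}{K_{\min}\underline\lambda}\int_0^x|p_2(y,t)|\,dy .
\end{equation*}
Gronwall's inequality in $x$ then gives $\sup_x|p_2(x,t)|\le C_2\sup_x|u(x,t)|$, with $C_2=K_{\max}e^{DK_{\max}L_g/(K_{\min}\underline\lambda)}$.

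\textbf{Step 3 (bound on $p_1$).} This is the step I expect to be the main obstacle, because $f$ is only assumed forward complete and not globally Lipschitz. I would use the strong forward completeness certificate of Assumption~\ref{Assumption 1}. Equation \eqref{MF3} says that $x\mapsto p_1(x,t)$ solves
\begin{equation*}
\partial_x p_1=\frac{f\bigl(p_1,\lambda(p_3)p_2\bigr)}{\lambda(p_3)},\qquad p_1(0,t)=X(t).
\end{equation*}
By \eqref{equation 8}, $\partial_x\Theta(p_1)\le\underline\lambda^{-1}\bigl(\Theta(p_1)+\mathcal{G}_3(\bar\lambda C_2\sup|u|)\bigr)$; the sign of the right-hand side is nonnegative, so dividing by $\lambda\ge\underline\lambda$ is legitimate. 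Gronwall in $x$ together with \eqref{equation 7} gives
\begin{equation*}
|p_1(x,t)|\le\mathcal{G}_1^{-1}\Bigl(e^{D/\underline\lambda}\bigl(\mathcal{G}_2(|X(t)|)+\tfrac{D}{\underline\lambda}\mathcal{G}_3(\bar\lambda C_2\sup_x|u(x,t)|)\bigr)\Bigr)=:\rho\bigl(|X(t)|+\sup_x|u(x,t)|\bigr).
\end{equation*}
Here $\rho$ can be taken of class $\mathcal{K}_\infty$ after majorizing both arguments by their sum. Since $\kappa$ is continuous with $\kappa(0)=0$, the function $\hat\kappa(r):=r+\max_{|p|\le r}|\kappa(p)|$ is of class $\mathcal{K}_\infty$, and $|\kappa(p_1(x,t))|\le\hat\kappa\circ\rho(\cdot)$.

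\textbf{Step 4 (assembly).} Insert the bounds of Steps 1--3 into \eqref{nonlinear backstepping}:
\begin{equation*}
|w(x,t)|\le\bar\lambda K_{\max}\sup|u|+\hat\kappa\circ\rho\bigl(|X(t)|+\sup|u|\bigr)+\frac{\bar\lambda K_{\max}L_gDC_2}{K_{\min}\underline\lambda}\sup|u| .
\end{equation*}
The right-hand side is a $\mathcal{K}_\infty$ function of $|X(t)|+\sup_x|u(x,t)|$; call it $\mathcal{K}_3$. This gives \eqref{norm 5}.
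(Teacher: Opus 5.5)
Your proposal is correct and follows the same route as the paper's proof. Both use kernel bounds on $K$, then a Gronwall estimate showing $\sup_x|p_2|$ is linear in $\sup_x|u|$ (via the Lipschitz property and $g(\cdot,0)=0$), then the forward-completeness function $\Theta$ integrated in $x$ to bound $p_1$, and finally a term-by-term assembly of \eqref{nonlinear backstepping}. Your version is slightly tidier in two places: you make the kernel constants uniform in $t$ via Assumption~\ref{Assumption 3}, and you keep $\mathcal{G}_3(\bar\lambda C_2\sup_x|u|)$ rather than pulling the factor $\lambda$ out of $\mathcal{G}_3$ as a multiplicative constant, which is not valid for a general $\mathcal{K}_\infty$ function.
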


\begin{lem}[Inverse Norm Equivalence] \label{lemma 12}
	There exists a class $\mathcal{K}_{\infty}$ function {$\mathcal{K}_4$} such that
	\begin{equation}\label{norm 6}
		\sup_{x\in [0,D]}\big|u(x,t)\big|\leq {\mathcal{K}_4}\left( |X(t)|+\sup_{x\in [0,D]}\big|w(x,t)\big|\right)
	\end{equation}
	for all $t\geq 0$.
\end{lem}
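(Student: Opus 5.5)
The goal is Lemma~\ref{lemma 12}: bound $\sup_x|u(x,t)|$ by a $\mathcal{K}_\infty$ function of $|X(t)|+\sup_x|w(x,t)|$. The plan is to work directly with the inverse transformation \eqref{non inverse backstepping} of Lemma~\ref{lemma 10}. We bound the backward predictors $\pi_1,\pi_2,\pi_3$ in sup-norm by a $\mathcal{K}_\infty$ function of $|X(t)|+\sup_x|w(x,t)|$, and then bound $u$.

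\textbf{Step 1: the kernel $L$.} Since $c$ is bounded and $\lambda\geq\underline\lambda$ by Assumption~\ref{Assumption 3}, we have
\[
e^{-D\|c\|_\infty/\underline\lambda}\leq L(x_1,x_2,t)\leq e^{D\|c\|_\infty/\underline\lambda}
\]
uniformly in $(x_1,x_2,t)\in\mathcal{T}$. Likewise $1/\lambda(\pi_3)\leq 1/\underline\lambda$, whatever $\pi_3$ is. So no bound on $\pi_3$ is needed for the speed factors. This is important, because $\pi_3$ involves the history $X(s)$, $s\in[t-\tau,t]$, which is not part of the right-hand side of \eqref{norm 6}.

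\textbf{Step 2: the ODE predictor $\pi_1$.} Substituting \eqref{npi2} into \eqref{npi1} gives
\[
\pi_1(x,t)=X(t)+\int_0^x \frac{f\bigl(\pi_1(y,t),\,w(y,t)+\kappa(\pi_1(y,t))\bigr)}{\lambda(\pi_3(y,t))}\,dy .
\]
Reparametrize by $\overline\sigma(x,t)$ from \eqref{npi4}, which is strictly increasing in $x$ with derivative $1/\lambda(\pi_3)\in[1/\bar\lambda,1/\underline\lambda]$. Then $\pi_1(\cdot,t)$, viewed as a function of $\theta=\overline\sigma$, solves the closed-loop ODE $\dot\xi=f(\xi,\kappa(\xi)+\omega(\theta))$ with $\xi(t)=X(t)$ and input $\omega(\theta)=w(x(\theta),t)$. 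This runs over a time interval of length at most $D/\underline\lambda$.

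Assumption~\ref{Assumption 2} (ISS) then gives
\[
|\pi_1(x,t)|\leq \eta(|X(t)|,0)+\psi\Bigl(\sup_y|w(y,t)|\Bigr)=:\mathcal{K}_{\pi}\bigl(|X(t)|+\sup|w|\bigr).
\]
Consequently,
\[
|\pi_2|\leq \frac{\sup|w|+\alpha_\kappa(|\pi_1|)}{\underline\lambda},
\]
where $\alpha_\kappa(r):=\max_{|\xi|\leq r}|\kappa(\xi)|$ is a class-$\mathcal{K}$ majorant, since $\kappa$ is $C^1$ with $\kappa(0)=0$.

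\textbf{Step 3: bound on $u$.} Using Assumption~\ref{assump3} with $g(x,0)=0$, we have $|g(x-y,\pi_2)|\leq L_g|\pi_2|$. Applying this in \eqref{non inverse backstepping} gives
\[
|u(x,t)|\leq \frac{\bar L}{\underline\lambda}\Bigl(\sup|w|+\alpha_\kappa(|\pi_1|)\Bigr)+\frac{\bar L D L_g}{\underline\lambda}\sup|\pi_2| ,
\]
where $\bar L:=e^{D\|c\|_\infty/\underline\lambda}$ is the upper bound on $L$ from Step~1. Combining this with Step~2 yields a $\mathcal{K}_\infty$ function $\mathcal{K}_4$ of $|X(t)|+\sup_x|w(x,t)|$. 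Adding the identity to it ensures unboundedness.

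\textbf{Main obstacle.} The delicate point is Step~2. The change of variables $x\mapsto\overline\sigma(x,t)$ depends on $\pi_3$, and hence on the unknown solution of the coupled system \eqref{npi1}--\eqref{npi4}. So we must first know that this system has a unique continuous solution, for instance by a contraction/fixed-point argument in $C^0([0,D])$ on small $x$-intervals followed by continuation. We then apply ISS to the reparametrized trajectory. Since ISS holds for every measurable locally bounded input and every time horizon, the particular (solution-dependent) time scaling does not matter. Only the uniform bounds on $\lambda$ enter, and those affect only the finite horizon, not the ISS gains.
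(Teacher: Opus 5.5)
Your proposal is correct. Steps~1 and~3 are essentially the paper's argument: the uniform bound $\overline{L}$ on the kernel, the bound on $\pi_2$ read off from \eqref{npi2}, and $|g(x-y,\pi_2)|\le L_g|\pi_2|$ inserted into \eqref{non inverse backstepping}. The genuine difference is in how you bound $\pi_1$.

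\textbf{The paper's route.} It works directly in the spatial variable. It applies the forward-completeness inequality \eqref{equation 8} of Assumption~\ref{Assumption 1} to $V(x)=\Theta(\pi_1(x,t))$. It then bounds the input crudely as $|\kappa(\pi_1)+w|\le\rho(|\pi_1|)+\sup_x|w|$. Finally it compares $V$ with the scalar ODE $\dot z=W(z)+C$ on $[0,D]$, where $W(V)=\frac{1}{\underline{\lambda}}\bigl[V+\mathcal{G}_4(\rho(\mathcal{G}_1^{-1}(V)))\bigr]$.

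\textbf{Your route.} You use $\partial_x\overline{\sigma}=1/\lambda(\pi_3)$ to turn $\pi_1(\cdot,t)$ into a trajectory of the closed loop $\dot\xi=f(\xi,\kappa(\xi)+\omega)$ started at $X(t)$. This is consistent with the representation $\pi_1(x,t)=X(\overline{\sigma}(x,t))$ obtained in Appendix~\ref{Appendix B}. You then invoke the ISS property of Assumption~\ref{Assumption 2}.

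\textbf{What your route buys.} You get the estimate $\eta(|X(t)|,0)+\psi(\sup_x|w(x,t)|)$. It is independent of $D$ and of the bounds on $\lambda$, and it needs no growth condition.

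It also sidesteps a point the paper's comparison step leaves open. The ODE $\dot z=W(z)+C$ need not have a solution on all of $[0,D]$ when $W$ is superlinear. For example, take $\dot X=\omega$, $\Theta(X)=X^2$, $\mathcal{G}_3(r)=r^2$ and $\kappa(X)=-X^3$. Assumptions~\ref{Assumption 1} and~\ref{Assumption 2} both hold, yet $W(V)=(V+4V^3)/\underline{\lambda}$, which escapes in finite $x$ when $|X(t)|$ is large.

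The reason is that the feedback $\kappa(\pi_1)$ sits inside the input. ISS of the closed loop, rather than forward completeness of the open loop, is therefore the natural tool here, and you chose it.

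\textbf{What each route costs.} The paper's route mirrors the proof of Lemma~\ref{lemma 11} and needs no change of variables. Yours needs three things, all of which you address or which follow from the assumptions:
\begin{itemize}
\item strict monotonicity of $\overline{\sigma}(\cdot,t)$, guaranteed by Assumption~\ref{Assumption 3};
\item uniqueness for the closed-loop ODE with continuous input, which follows from $f,\kappa\in C^1$ and identifies $\xi$ with the reparametrized $\pi_1$;
\item existence of the backward predictor, which you correctly flag.
\end{itemize}
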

{
The proofs of Lemmas \ref{lemma 9}--\ref{lemma 10} and Lemmas \ref{lemma 11}--\ref{lemma 12} are provided in Appendix~\ref{Appendix A}. 
We are in a position to prove the main result, Theorem  \ref{thm3}.
}

\begin{proof}
The well-posedness of the closed-loop system is established in Appendix \ref{Appendix B}. Specifically, the analysis in Appendix \ref{Appendix B} guarantees that the target system \eqref{non tar 1}--\eqref{non tar 3} admits a unique, locally Lipschitz solution $w$, given explicitly by:
\begin{equation} \label{wsolution}
w(x,t) = 
\begin{cases} 
w_0(\zeta(0;\,x,t)), & \text{if } 0 \le \zeta(0;\,x,t) \le D, \\
0, & \text{if } \zeta(0;\,x,t) > D.
\end{cases}
\end{equation}
\textcolor{black}{The compatibility condition of the solution \eqref{wsolution} at $(D,0)$ can be easily checked from the transformation \eqref{nonlinear backstepping} and the compatibility condition \eqref{nu0}.}
The subsequent development of this section is concerned with the proof of the stability estimate \eqref{non-sta}. From the explicit solution \eqref{wsolution}, we directly obtain 
\begin{equation}\label{sup w1}
	\sup_{x\in [0,D]} \big|w(x,t)\big| \leq \sup_{x\in [0,D]} \big|w_0(x)\big|, \quad \text{for all} \quad t\geq 0,
\end{equation}
and the fact that it vanishes in finite time:
\begin{equation}\label{sup w2}
	|w(x,t)|=0 \quad \text{for} \quad t > \sigma(D,0).
\end{equation}

		Based on Assumption \ref{Assumption 2}, there exist a class $\mathcal{KL}$ function $\widetilde{\mathcal{L}}_1$
		and a class $\mathcal{K}_{\infty}$ function $\widetilde{\mathcal{K}}_1$ such that the solution of {(\ref{non tar 1})} satisfies
		\begin{equation}\label{ISSX}
			|X(t)|\leq \widetilde{\mathcal{L}}_1( |X(s)|,t-s)+\widetilde{\mathcal{K}}_1\bigg(\sup_{\tau \in [s ,t]}|w(0,\tau)|\bigg) 
		\end{equation}
		for all $t\geq s \geq 0$. We divide the time domain into different intervals and give the estimate respectively.		
		
		For $0\leq t \leq \sigma(D,0)$, since the initial condition $X(s)=h(s), s\in [-\tau,0]$,  from \eqref{sup w1}, \eqref{ISSX},  we have 
		\begin{align}
			&\sup_{s\in [t-\tau,t]}|X(s)| \leq \sup_{s\in [-\tau,0]}|h(s)|+ \sup_{s\in [0,t]}|X(s)| \notag \\
			&\leq \sup_{s\in [-\tau,0]}|h(s)| + \widetilde{\mathcal{L}}_1\left( |h(0)|,0\right)+\widetilde{\mathcal{K}}_1\left(\sup_{\tau \in [0,t]}|w(0,\tau)|\right) \notag \\
			& \leq \widetilde{\mathcal{K}}_2\left(  \sup_{s\in [-\tau,0]}|h(s)|+\sup_{x \in [0,D]}|w_0(x)|\right),  \label{supX1}
		\end{align}		
		where $\widetilde{\mathcal{K}}_2(s)=s+\widetilde{\mathcal{L}}_1(s,0)+\widetilde{\mathcal{K}}_1(s)$.
		
		For $t  \geq \sigma(D,0)+\tau$, from \eqref{sup w2}, \eqref{ISSX}, we have 
	\begin{equation}\label{esbsig}
			\sup_{s\in [t-\tau,t]}|X(s)| \leq  \widetilde{\mathcal{L}}_1\left(\, |X(\sigma(D,0))|,\,\,t-\tau -\sigma(D,0)\,\right), 
		\end{equation}		
		 using \eqref{sup w1}, \eqref{ISSX} again gives 
		\begin{align}\label{esig}
			|X(\sigma(D,0))| \leq \widetilde{\mathcal{L}}_1(|h(0)|, \sigma(D,0))+\widetilde{\mathcal{K}}_1\left(\!\!\sup_{x \in [0,D]}|w_0(x)|\!\!\right). 
		\end{align}
		Substituting  \eqref{esig} into \eqref{esbsig} and recalling the property of $\mathcal{KL}$ function, there exists a class $\mathcal{KL}$ function $\widetilde{\mathcal{L}}_2$ such that
		\begin{align}
			\sup_{s\in [t-\tau,t]}|X(s)| &\leq \widetilde{\mathcal{L}}_1\Bigg(\!\!\! \widetilde{\mathcal{L}}_1\left(|h(0)|, \sigma(D,0)\right)\notag \\ 
            &~~~~~+\widetilde{\mathcal{K}}_1\left(\!\!\sup_{x \in [0,D]}|w_0(x)|\!\!\right),\,t-\tau -\sigma(D,0)\!\!\!\Bigg)
            .\label{supX2}
		\end{align} 
		
		For $ \sigma(D,0)+\tau > t > \sigma(D,0)$, from \eqref{supX1}--\eqref{esig}, we have 
		\begin{align}
			&\sup_{s\in [t-\tau,t]}|X(s)| \leq \sup_{s\in [t-\tau,\sigma(D,0)]} |X(s)|+ \sup_{s\in [\sigma(D,0),t]}| X(s)| \notag \\ 
			& \leq \widetilde{\mathcal{K}}_2\bigg(  \sup_{s\in [-\tau,0]}|h(s)|+\sup_{x \in [0,D]}|w_0(x)|\bigg) \notag \\
            &\quad + \widetilde{\mathcal{L}}_1 \left(\widetilde{\mathcal{L}}_1\left(|h(0)|, \sigma(D,0)\right)+\widetilde{\mathcal{K}}_1\left(\!\!\sup_{x \in [0,D]}|w_0(x)|\!\!\right),0\right). \label{supX3}
		\end{align}		
		Combining the estimates  \eqref{supX1}, \eqref{supX2} and \eqref{supX3},  from the property of $\mathcal{KL}$, $\mathcal{K}_{\infty}$ functions,  we can conclude the existence of a class $\mathcal{KL}$ function $\widetilde{\mathcal{L}}_3$ such that for all $t\geq 0$
		\begin{align}\label{normLy}
			\sup_{s\in [t-\tau,t]}|X(s)| 
			\leq  \widetilde{\mathcal{L}}_3 \left(\sup_{s\in [-\tau,0]}|h(s)|+\sup_{x \in [0,D]}|w_0(x)|,t\right). 
		\end{align}
        Moreover, from \eqref{sup w1}, \eqref{sup w2}, we can find a $\mathcal{KL}$ function $\widetilde{\mathcal{L}}_4$ such that 
        \begin{equation}\label{KLW}
           \sup_{x \in [0,D]}|w(x,t)| \leq  \widetilde{\mathcal{L}}_4\left(\sup_{x \in [0,D]}|w_0(x)|,t \, \right).
        \end{equation}
		Therefore, from \eqref{norm 5}, \eqref{norm 6}, \eqref{normLy} and \eqref{KLW}, we deduce that 
		\begin{align}
\mathcal{L}_2(s,t)=&\widetilde{\mathcal{L}}_3\left( s+\mathcal{K}_3(s),t \right) \notag \\
&~~~~~~~+\mathcal{K}_{4}\left(\widetilde{\mathcal{L}}_3\left( s+\mathcal{K}_3(s),t \right)+\widetilde{\mathcal{L}}_4(\mathcal{K}_3(s),t)\right),
		\end{align}
		which is $\mathcal{KL}$ function from the definition directly. Therefore the proof of estimation {\eqref{non-sta}} is complete.
	\end{proof}
    

    

\section{Modeling of a buffer-driven  conveyor belt with source terms and backlog}  \label{section m}
{We consider a buffer-driven production line or service queuing system extending the work of  \cite{Krstic2017u} to systems with transport speed depending on the real-time  inventory level.  In the case of a factory line, raw material enters the conveyor at its downstream end $x=D$, travels upstream to $x=0$, and then exits the production line into the buffer.  }

\subsection{Governing equations of the buffer ODE  and conveyor PDE }

\textbf{Buffer ODE influx form.} We denote the load of goods stored in the queue by $Q(t)\geq 0$ to represent the  buffer occupancy. In particular, from the conservation of mass, the buffer evolves according to
\begin{equation}\label{eq buffer1}
\dot Q(t)
=\nu_{\mathrm{in}}(t)
-\nu_{\mathrm{out}}(t),
\end{equation}
where the influx of raw material  
\begin{equation}\label{eq buffer2}
\nu_{\mathrm{in}}(t)
=\alpha  \varphi(t).
\end{equation}
and the outflow  retains the standard service‐rate form
\begin{equation}\label{eq buffer3}
\nu_{\mathrm{out}}(t)
=\min\{\,Q(t),\,\mu\}.
\end{equation}

Here:
\begin{itemize}
  \item $\varphi(t)$ denotes the material flux exiting the production line at $x=0$, which is available directly from the predictor;
  \item $\mu>0$ is the maximum service rate of the buffer;
  \item $\alpha>0$ is the connectivity coefficient between buffer and the production line; The connectivity coefficient denotes the fraction of materials flux flowing from the final stage to the buffer when the  rate of losses is assumed to be known and equal to $1-\alpha$.
\end{itemize}

\textbf{Queuing systems model.}
We consider a production‐line model where the  density  of parts at stage $0\leq x \leq D$ and time $ t\geq 0$ denoted by $\rho(x,t)$,  satisfies the following PDE
\begin{equation}\label{Mp}
\partial_t \rho(x,t)
=\omega\bigl(R(t)\bigr)\,\partial_x \rho(x,t)
+S(x,t)
-C(x)\,\rho(x,t),
\end{equation}
where  $\omega\bigl(R(t)\bigr)$ is defined by 
\begin{equation}
 \omega(R)
=\frac{1}{P\,(1+R)} ,
\qquad R(t)=\int_{t-\tau}^{t}Q(s)\,\mathrm{d}s
,  
\end{equation}
with $P$ defined as the processing time and $\tau,$   a chosen constant that indicates  the effect of the history of the buffer load on the speed of production.
The  output flux of the production line \eqref{Mp} at $x=0$ is defined as follows 
\begin{equation}\label{Mp1}
    \varphi(t)= \omega(R(t)) \rho(0,t),
\end{equation}
and influx at the  controlled boundary, $x=D,$ is given by
\begin{equation}\label{Mpd}
    U(t)= \omega(R(t))  \rho(D,t).
\end{equation} 

{
The term $S$ in \eqref{Mp} models  re-injection of reworked parts back into the production line, namely, the recycling effect
\begin{equation}\label{Mp3}
    S(x,t)=A\,(D - x)\,\rho(0,t),
\end{equation}
where $\rho(0,t)$ is the part density value at the exit of the line and $A>0$ a factor of the total rework rate at each production stage, \textcolor{black}{which should be properly chosen according to real applications (see Proposition \ref{r16}).}   

{
The friction term 
\begin{equation}\label{Mp4}
C(x)\,\rho(x,t),
\end{equation}
with $C(x)\geq 0$, captures losses during part processing (e.g.\ scrap or wear) along the conveyor.}

{
In summary, the model \eqref{Mp}–\eqref{Mp4} extends the classical transport equation by (i) a state‐dependent convection speed $\omega(R(t))$ that reflects buffer congestion, (ii) a recycling source $S$ linked to lost material recovery, and (iii) a friction term $\textcolor{black}{C\cdot \rho}$ accounting for in‐process losses.}

By using the $\varphi(t)$, we avoid the density‐based transformation employed in prior work (e.g.\ \cite{Borsche2010,Herty2007,Krstic2017u}, and instead feed the predicted flux directly into \eqref{eq buffer1}.  This simplifies implementation and improves numerical efficiency.

\subsection{Safe softened ``bang-bang" control of the delay-free plant }
Before introducing the delay compensation, we briefly recall the nominal feedback control law $u(Q^*,Q(t))$ for the delay‐free buffer dynamics of \eqref{eq buffer1}--\eqref{eq buffer3}, as developed in \cite{Krstic2017u}
\begin{equation}\label{Mp5}
\dot Q(t)
=\alpha\,u(Q^*,Q(t))
-\min\{Q(t),\mu\}.
\end{equation}
  Our objective is to regulate $Q(t)$ to a prescribed set‐point $Q^*\in[0,\mu]$.

We adopt a piecewise‐exponential “bang–bang” law of the form  
\begin{align}\label{bangbang}
u(Q^*,Q(t))
&=B_\ell\bigl(Q(t),Q^*\bigr)\,\mathcal{H}\bigl(Q^*-Q(t)\bigr)
\notag \\
&~~~~~~~~~+ B_r\bigl(Q(t),Q^*\bigr)\,\mathcal{H}\bigl(Q(t)-Q^*\bigr),
\end{align}
where $\mathcal{H}$ denotes the Heaviside function.  The branches $B_\ell$ and $B_r$ are defined so as to match the value and slope at $Q=Q^*$:
\begin{align}\label{bangBl}
B_\ell(Q,Q^*)
&=\frac{Q^*}{\alpha} + \bigl(B_{\max}-\frac{Q^*}{\alpha} \bigr)\,
\frac{1 - e^{\Lambda_\ell(Q^*)\,(Q-Q^*)}}
     {1 - e^{-\Lambda_\ell(Q^*)\,Q^*}},
\end{align}
\begin{align}\label{bangBr}
B_r(Q,Q^*)
=\frac{Q^*}{\alpha} -\frac{Q^*}{\alpha}
\frac{1 - e^{-\Lambda_r(Q^*)\,(Q-Q^*)}}
     {1 - e^{-\Lambda_r(Q^*)\,(\,Q_{\max}-Q^*\,)}}.
\end{align}
Here $u$ is the input {flux} and $B_{\max}$ and $Q_{\max}$ are the maximum value of the input {flux} and capacity of the queue, respectively. In order to guarantee a well-running process, we require that the maximum input remains below both the system service capacity and the queue upper bound $Q_{\max}$, which means that these maximum values are chosen to satisfy
\begin{equation}
B_{\max} \le \frac{ \min\{Q_{\max}, \mu\}}{\alpha}.
\end{equation}
The flux-based representation of the ODE dynamics reduced to \eqref{Mp5} does not require an explicit constraint on 
$Q_{\max},$ thereby lifting the limitation 
$Q_{\max} < \alpha/P$ imposed in \cite{Krstic2017u}.

The positive gains $\Lambda_\ell(Q^*)$, $\Lambda_r(Q^*)$
are uniquely determined to ensure the
continuous differentiability of the extended control law \eqref{bangbang}, namely, they are chosen to satisfy  
\begin{equation}
S(Q^*):=- \frac{\partial B_\ell (Q,Q^*) }{\partial Q}\bigl|_{Q=Q^*}
=-\frac{\partial B_r (Q,Q^*)}{\partial Q}\bigl|_{Q=Q^*},
\end{equation}
where $S(Q^*)$ is  the slope design parameter of the controller satisfied
\begin{align}
S(Q^*) &= \frac{\Lambda_\ell(Q^*) \left( B_{\max} - \frac{Q^*}{\alpha} \right)}{1 - e^{-\Lambda_\ell(Q^*) Q^*}}, \label{sq1}\\
S(Q^*) &= \frac{\Lambda_r(Q^*) \, }{1 - e^{-\Lambda_r(Q^*) (Q_{\max} - Q^*)}} \frac{Q^*}{\alpha}. \label{sq2}
\end{align}
The right hand sides of \eqref{sq1} and \eqref{sq2}  are deduced by differentiating \eqref{bangBl} and \eqref{bangBr} with respect to the state $Q$ for $Q = Q^*$. The gains $\Lambda_\ell(Q^*)$ and $\Lambda_r(Q^*)$ are computed as the unique strictly positive solutions of the fixed point equations \eqref{sq1} and \eqref{sq2}, selecting $S(Q^*)$ satisfied $S(Q^*)\geq S_{\min}(Q^*)$ where 
\begin{equation} \label{eq:smin}
S_{\min}(Q^*) = \max \left\{
\frac{B_{\max} - \frac{Q^*}{\alpha}}{Q^*}, \;
\frac{\frac{Q^*}{\alpha}}{Q_{\max} - Q^*}
\right\}.
\end{equation}

\begin{prop}\cite{diagne2015state}\label{lemB}
    For any setpoint $Q^* \in \left[0, \min \left\{Q_{\max}, \mu \right\} \right]$ and for any chosen setpoint slope $S(Q^*) \in \mathbb{R}$ satisfying $S(Q^*) \ge S_{\min}(Q^*)$, where $S_{\min}(Q^*)$ is given by \eqref{eq:smin}, taking the control gains $(\Lambda_\ell, \Lambda_r)$ as solutions of
\begin{equation} \label{eq:lambda_left}
\Lambda_\ell (B_{\max} - \frac{Q^*}{\alpha}) - S(Q^*) \left(1 - e^{-\Lambda_\ell Q^*} \right) = 0,
\end{equation}
\begin{equation} \label{eq:lambda_right}
\Lambda_r \frac{Q^*}{\alpha} - S(Q^*) \left(1 - e^{-\Lambda_r (Q_{\max} - Q^*)} \right) = 0,
\end{equation}
the closed-loop system consisting of \eqref{Mp5} with an initial condition $Q_0 \in \left[0, \min \left\{Q_{\max}, \mu \right\} \right]$  and 
control law \eqref{bangbang} is asymptotically stable.
\end{prop}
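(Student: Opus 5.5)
The plan is a direct one-dimensional Lyapunov argument on a forward-invariant interval, after first checking that the gains $(\Lambda_\ell,\Lambda_r)$ are well defined and that the law \eqref{bangbang} is continuous and monotone in $Q$. I will assume $Q^*/\alpha\le B_{\max}$, so that the first entry of \eqref{eq:smin} is nonnegative. I also take $0<Q^*<Q_{\max}$; the endpoint setpoints are degenerate and I would treat them separately by continuity of the branches, or exclude them.

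\textbf{Step 1: well-posedness of the gains.} Write \eqref{eq:lambda_left} as $F_\ell(\Lambda):=a\Lambda-S(1-e^{-b\Lambda})=0$, with $a=B_{\max}-Q^*/\alpha\ge 0$ and $b=Q^*>0$.
\begin{itemize}
\item $F_\ell(0)=0$ and $F_\ell'(0)=a-Sb\le 0$, precisely because $S\ge S_{\min}(Q^*)\ge a/b$.
\item $F_\ell''(\Lambda)=Sb^2e^{-b\Lambda}>0$, so $F_\ell$ is strictly convex.
\item $F_\ell(\Lambda)\to+\infty$ as $\Lambda\to\infty$.
\end{itemize}
Hence $F_\ell$ has exactly one strictly positive root whenever $S>a/b$. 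When $S=a/b$, the positive root degenerates to $\Lambda\to0^+$, and I would interpret the branch as its linear limit. The same argument applies to \eqref{eq:lambda_right}, with $a=Q^*/\alpha$ and $b=Q_{\max}-Q^*$. With these roots, $B_\ell(Q^*,Q^*)=B_r(Q^*,Q^*)=Q^*/\alpha$ and both one-sided slopes equal $-S(Q^*)$, so $u(Q^*,\cdot)$ is $C^1$.

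\textbf{Step 2: monotonicity and invariance.} Differentiating \eqref{bangBl} and \eqref{bangBr} shows that both branches are nonincreasing in $Q$. The left branch goes from $B_\ell(0,Q^*)=B_{\max}$ down to $Q^*/\alpha$, and the right branch goes from $Q^*/\alpha$ down to $B_r(Q_{\max},Q^*)=0$. Let $\bar Q=\min\{Q_{\max},\mu\}$. Checking the endpoints of $[0,\bar Q]$:
\begin{itemize}
\item At $Q=0$ we have $\dot Q=\alpha B_{\max}\ge 0$.
\item At $Q=\bar Q\ge Q^*$ we have $\alpha u\le Q^*\le \bar Q=\min\{\bar Q,\mu\}$, so $\dot Q\le 0$.
\end{itemize}
Since the right-hand side is locally Lipschitz, solutions exist and are unique, and $[0,\bar Q]$ is forward invariant. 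On this interval $\min\{Q,\mu\}=Q$, so the closed loop reduces to $\dot Q=F(Q):=\alpha u(Q^*,Q)-Q$.

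\textbf{Step 3: Lyapunov argument.} $F$ is strictly decreasing, being a nonincreasing function minus $Q$, and $F(Q^*)=0$. Therefore $(Q-Q^*)F(Q)<0$ for $Q\neq Q^*$. Take $V=\tfrac12(Q-Q^*)^2$; then $\dot V=(Q-Q^*)F(Q)<0$ away from $Q^*$ on the compact invariant interval. This gives Lyapunov stability and convergence $Q(t)\to Q^*$ for every $Q_0\in[0,\bar Q]$. Near $Q^*$ the linearization $F'(Q^*)=-(\alpha S(Q^*)+1)<0$ also yields local exponential convergence.

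\textbf{Main obstacle.} The delicate part is Step 1: the existence and uniqueness of positive gains, the boundary case $S=S_{\min}$, and the degenerate setpoints where the denominators in \eqref{bangBl}--\eqref{bangBr} vanish. I would handle all of these through the convexity argument above together with limiting (linear) branches.
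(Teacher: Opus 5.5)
The paper gives no proof of this proposition. It quotes the result from \cite{diagne2015state} and refers the reader there, so your argument is a self-contained substitute rather than a reconstruction of an in-paper proof. Your route is sound.

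You need to read \eqref{bangbang} piecewise. Taking $\mathcal{H}(0)=1$ literally would give $u=2Q^*/\alpha$ at $Q=Q^*$, and then $Q^*$ would not be an equilibrium. With the piecewise reading, the rest goes through:
\begin{itemize}
\item Both branches are continuous and nonincreasing.
\item They take the values $B_{\max}$, $Q^*/\alpha$ and $0$ at $Q=0$, $Q^*$ and $Q_{\max}$.
\item Forward invariance of $[0,\bar Q]$, together with strict monotonicity of $F(Q)=\alpha u(Q^*,Q)-Q$ and $F(Q^*)=0$, gives convergence from every initial condition in the interval.
\end{itemize}

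Compared with the citation, your argument buys transparency. It shows that the stability claim uses only continuity, monotonicity and $u(Q^*,Q^*)=Q^*/\alpha$. The gain equations \eqref{eq:lambda_left}--\eqref{eq:lambda_right} matter only for the $C^1$ matching at $Q^*$, which is what Assumption~\ref{Assumption 2} requires of $\kappa$ when this law is fed into the predictor, and for the local rate $\alpha S(Q^*)+1$.

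It also surfaces hypotheses that the stated proposition leaves implicit:
\begin{itemize}
\item You need $Q^*\le\alpha B_{\max}$. This is not implied by $Q^*\le\min\{Q_{\max},\mu\}$, since the paper only imposes $\alpha B_{\max}\le\min\{Q_{\max},\mu\}$. For $Q^*>\alpha B_{\max}$, \eqref{eq:lambda_left} has no positive root.
\item At the endpoints $Q^*=0$ and $Q^*=Q_{\max}$, $S_{\min}$ is infinite, so they are degenerate.
\item The boundary case $S=S_{\min}$ also needs separate handling, as you note.
\end{itemize}

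One small correction to Step 1. When $Q^*=\alpha B_{\max}$ exactly, you have $a=0$ and $F_\ell(\Lambda)=-S(1-e^{-b\Lambda})\to -S$, not $+\infty$, so there is no positive root. The left branch is then identically $Q^*/\alpha$, and $u$ is only Lipschitz at $Q^*$, not $C^1$. Steps 2 and 3 are unaffected, so the stability conclusion still holds. However, you should either require $Q^*<\alpha B_{\max}$ for the gain statement or treat $a=0$ separately.
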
 

We refer the reader to \cite{diagne2015state} for the proof of Proposition \ref{lemB} and detailed derivation  of the above result.

From \eqref{MF1}--\eqref{MF7}, the predictor feedback control law of the production line is written as 
\begin{align}
& U(t)=\frac{(1+p_3(D,t))\, u(Q^*,p_1(D,t))}{(1+p_3(0,t)) K(D,D,t)} \notag  \\
    &~~~~~~~~~~~~~~~~~  -\int_0^D \frac{AP \,y\, p_2(y,t)(1+p_3(y,t))}{K(D,y,t)(1+p_3(0,t))}\,   \label{C0}   dy, \\
	&p_1(x,t)\! =\! Q(t)\!\! \notag \\
    &~+\!\! \int_0^x \!\!
 \!\!\left(\alpha\, p_2(y,t)\! - P\,(1 \! +p_3(y,t)) \,\min(p_1(y,t),\mu)  \right) dy, \label{C1}   \\  
	&p_2(x,t) = K(x,x,t) \cdot \rho(x,t) 
	 \notag \\  
	&+ \int_0^x\!\! \frac{K(x,x,t)}{K(x,y,t)}A(D\!-x\!+y)\,p_2(y,t) \, P(1 \!+p_3(y,t))  dy,  \label{C2}\\  
	&p_3(x,t) = \int_{t-\tau}^t \gamma(s, \sigma(x,t))\, Q(s)\, ds 
	 \notag \\
	& + \int_0^x \gamma(\sigma(y,t), \sigma(x,t)) \cdot P\, p_1(y,t)\,(1+p_3(y,t)) \, dy, \label{C3}\\  
	&\sigma(x,t) = t + \int_0^x P(1+p_3(y,t)) \, dy, \notag 
\end{align}
where the kernel is 
\begin{align*}
	K(x,y,t) := \exp\left(- \int_0^{y} C(x - z)\,P\,(1+p_3(z,t)) \, dz \right).
\end{align*}

{\color{black}
As stated in the following proposition, we derive a locally safe softened ``bang-bang" control law  for the closed-loop system. Specifically we ensure that the  control input $U(t) \ge 0$ and the states $(Q, \rho) \ge 0$ by deriving relevant constraints.
\begin{prop}
[Locally safe ``bang-bang" control]\label{r16}
Let the recycle rate satisfies $A>0$, the spatial friction satisfies $C(x)\ge 0$, and the initial buffer level satisfies $0 \le Q(0) < Q_{\max}$. Assume that the initial state verifies $0 \le p_1(D,0) \le Q_{\max}$ and the compatibility condition \eqref{nu0}, which ensures $U(0)>0$.

If, in addition, the following constraint holds:
\begin{equation}\label{Abound}
 \frac{M}{\underline{u}} < \frac{2}{A D P^2 (1+\tau Q_{\max})},
\end{equation}
where,
\begin{align*}
\underline{u} = \min\left\{\frac{Q^*}{\alpha},\, u(Q^*, Q(0)),\, u(Q^*, p_1(D,0))\right\}
\end{align*}
denotes the global minimum nominal control, and the constant $M$ is defined by
\begin{align}
    M = \overline{\rho}\, e^{2 A P D^2 (1+\tau Q_{\max})}, \label{Mchar}
\end{align}
with
\begin{align*}
\overline{\rho} = \max\left\{ \max_{0 \le x \le D} \rho(x,0),\; P(1+\tau Q_{\max})^2 B_{\max} \right\},
\end{align*}
then the control input $U(t)$ remains positive for all $t \ge 0$. Consequently, the solution $(Q, \rho)$ to the closed-loop system \eqref{eq buffer1}--\eqref{Mp4} remains non-negative for all $t \ge 0$. 
\end{prop}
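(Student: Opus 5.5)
We write the control law \eqref{C0} as a nominal part minus a correction. The nominal part is
\begin{equation*}
N(t)=\frac{(1+p_3(D,t))\,u(Q^*,p_1(D,t))}{(1+p_3(0,t))K(D,D,t)} .
\end{equation*}
The correction is
\begin{equation*}
I(t)=\int_0^D \frac{AP\,y\,p_2(y,t)(1+p_3(y,t))}{K(D,y,t)(1+p_3(0,t))}\,dy .
\end{equation*}
Since $U=N-I$, positivity of $U$ follows from $N(t)\ge c_1\underline{u}$ and $I(t)\le c_2 M$ with $c_2M<c_1\underline{u}$; condition \eqref{Abound} is set up so that this holds. The argument is a continuation (bootstrap) in time. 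The compatibility condition gives $U(0)>0$. Let
\begin{equation*}
t^*=\sup\{t:\ U(s)>0,\ Q(s)\in[0,Q_{\max}],\ \rho(\cdot,s)\ge0 \text{ on } [0,t]\}.
\end{equation*}
We assume $t^*<\infty$ and show that all the inequalities are strict at $t^*$, which is a contradiction.

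\textbf{Step 1: invariance of the physical states on $[0,t^*]$.} Along the characteristics of \eqref{Mp}, $\rho$ satisfies
\begin{equation*}
\frac{d}{ds}\rho=A(D-\zeta)\rho(0,s)-C\rho .
\end{equation*}
The inflow $\rho(D,\cdot)=U/\omega\ge0$, the recycle source is nonnegative (because $A>0$ and $\rho(0,\cdot)\ge0$), and the friction $C\ge0$ only produces the damping factor $e^{-\int C}$. Hence $\rho\ge0$. It follows that $\varphi\ge0$, and the buffer equation \eqref{eq buffer1} then keeps $Q\ge0$. We obtain $Q\le Q_{\max}$ from $B_{\max}\le\min\{Q_{\max},\mu\}/\alpha$ and the fact, via the predictor identity of Lemma \ref{lemma 9}, that the effective input reaching the buffer is $u(Q^*,Q)\le B_{\max}$.

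Consequently $0\le R\le\tau Q_{\max}$, so $1\le 1+p_3\le 1+\tau Q_{\max}$. This gives $P\le 1/\omega\le P(1+\tau Q_{\max})$ and $K\le 1$. Because $p_1$ is the predicted buffer level, it stays in $[0,Q_{\max}]$ as well. Then the monotonicity of $B_\ell$ and $B_r$ gives $u(Q^*,p_1(D,t))\ge\underline{u}$. The three values entering the definition of $\underline{u}$ come from: the minimum of $B_r$ is attained along trajectories that decay toward $Q^*$, and the two endpoints are $Q(0)$ and $p_1(D,0)$.

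\textbf{Step 2: lower bound on the nominal part.} From Step 1 and $K(D,D,t)\le1$,
\begin{equation*}
N(t)\ge \frac{\underline{u}}{1+\tau Q_{\max}} .
\end{equation*}

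\textbf{Step 3: upper bound on the correction (the main obstacle).} Using \eqref{C2} and the kernel bound $K(x,x,t)/K(x,y,t)\le1$ on $y\le x$, we get
\begin{equation*}
p_2(x,t)\le \sup_y\rho(y,t)+AP(1+\tau Q_{\max})D\int_0^x p_2(y,t)\,dy .
\end{equation*}
Gronwall's inequality then gives
\begin{equation*}
p_2\le \sup\rho\; e^{APD^2(1+\tau Q_{\max})} .
\end{equation*}

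The delicate point is a uniform-in-time bound $\sup_x\rho(x,t)\le\overline\rho\,e^{APD^2(1+\tau Q_{\max})}$. This accounts for the second factor in $M$, so that altogether $p_2\le M$. To prove it, we again integrate along characteristics. The transit time is at most $DP(1+\tau Q_{\max})$. The inflow density is bounded by $U(1+\tau Q_{\max})P\lesssim P(1+\tau Q_{\max})^2B_{\max}$, which is the second entry of $\overline\rho$. The recycle source is then handled by a Gronwall argument in $s$. We expect that some care, and possibly an a priori bound $U\le N$, will be needed to close this estimate without circularity.

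With $p_2\le M$ and $K(D,y,t)^{-1}\le e^{\int C}$ (absorbed into $M$ if necessary), we obtain
\begin{equation*}
I(t)\le \frac{AP(1+\tau Q_{\max})MD^2}{2} .
\end{equation*}
Condition \eqref{Abound} then gives $I(t)<N(t)$, so $U(t^*)>0$ strictly. Continuity of the solution (Theorem \ref{thm3}) extends all the inequalities past $t^*$, contradicting its definition. Therefore $U>0$ and $(Q,\rho)\ge0$ for all $t\ge0$.
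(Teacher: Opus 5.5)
Your split $U=N-I$ and the continuation argument are reasonable, but Step 3 has a genuine gap that extra care will not close. The intermediate target $\sup_x\rho(x,t)\le\overline\rho\,e^{APD^2(1+\tau Q_{\max})}$ is false in general.

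\textbf{Why the target fails.} By bounding the friction kernel $K$ by $1$, you discard it exactly where it acts as a compensating factor. The inflow density is $\rho(D,t)=P(1+R(t))U(t)$, and $U$ carries the factor $1/K(D,D,t)=\exp\bigl(\int_0^D C(D-z)P(1+p_3(z,t))\,dz\bigr)$. This is the friction loss over the \emph{current} transit time, which grows as the buffer fills and the line slows. Neither $\overline\rho$ (which sees only $t=0$) nor the $C$-free factor $e^{APD^2(1+\tau Q_{\max})}$ controls this growth. Concretely, take $C$ constant and large, $h\equiv0$, a line that is empty except for the thin layer at $x=D$ forced by \eqref{nu0}, and $A$ small enough for \eqref{Abound}. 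Then $\overline\rho\approx PB_{\max}e^{CPD}$. The closed loop, however, converges to the equilibrium with $\rho(D)\approx P(1+\tau Q^*)\frac{Q^*}{\alpha}e^{CP(1+\tau Q^*)D}$, which is larger by a factor of order $e^{CPD\tau Q^*}$. For the same reason, your bound ``inflow density $\lesssim P(1+\tau Q_{\max})^2B_{\max}$'' is off by $1/K(D,D,t)$. Likewise, $K(D,y,t)^{-1}$ cannot be absorbed into the fixed constant $M$ of \eqref{Mchar}.

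\textbf{The missing idea.} Bound $p_2$ directly through its predictor meaning $p_2(x,t)=\rho(0,\sigma(x,t))$; this is the ``predictive nature'' that the paper's omitted proof invokes. Do not go through $\sup_x\rho$. The proof of Lemma \ref{lemma 9} gives $\partial_tp_2=\omega(R)\,\partial_xp_2$. Since $w=\lambda(p_3)p_2-\kappa(p_1)$ and $w(D,t)=0$, you get $p_2(D,t)=P(1+p_3(D,t))\,u(Q^*,p_1(D,t))$. Hence
\[
\sup_x p_2(x,t)\le\max\Bigl\{\sup_x p_2(x,0),\ \sup_{0\le s\le t}P\bigl(1+p_3(D,s)\bigr)\,u\bigl(Q^*,p_1(D,s)\bigr)\Bigr\},
\]
and Gronwall on \eqref{C2} is needed only at $t=0$, where $K(x,x,0)\le 1$ is harmless. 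Then test the sign of $(1+p_3(0,t))K(D,D,t)\,U(t)$ instead of $U(t)$. Three things follow:
\begin{itemize}
\item only the ratios $K(D,D,t)/K(D,y,t)\le 1$ appear;
\item the nominal term is bounded below by $u(Q^*,p_1(D,t))$ itself;
\item no upper bound on $1/K$ is needed.
\end{itemize}

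\textbf{Step 1 on the transient.} The claim ``the effective input reaching the buffer is $u(Q^*,Q)$'' holds only once $w$ has vanished, i.e.\ for $t\ge\sigma(D,0)$. Before that, $\varphi(t)=u(Q^*,Q(t))+w(0,t)$, with $w(0,t)=w_0(\zeta(0;0,t))$ fixed by the initial data. So $Q\le Q_{\max}$, and hence $p_3\le\tau Q_{\max}$, is not established on the transient. Your bootstrap on $[0,t^*]$ also says nothing about $p_1(D,t^*)=Q(\sigma(D,t^*))$, which is a \emph{future} value. What does work: for $s\ge\sigma(D,0)$ the buffer follows the nominal loop of Proposition \ref{lemB}, starting from $Q(\sigma(D,0))=p_1(D,0)$. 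So $p_1(D,t)$ stays between $p_1(D,0)$ and $Q^*$, and monotonicity of $u$ in $Q$ gives $u(Q^*,p_1(D,t))\ge\min\{Q^*/\alpha,\,u(Q^*,p_1(D,0))\}$.

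\textbf{The final implication.} Your bounds $N\ge\underline u/(1+\tau Q_{\max})$ and $I\le AP(1+\tau Q_{\max})MD^2/2$ require $M/\underline u<2/\bigl(APD^2(1+\tau Q_{\max})^2\bigr)$. Condition \eqref{Abound} implies this only if $D(1+\tau Q_{\max})\le P$, which fails for instance for the simulation values $D=2$, $P=0.25$. So ``\eqref{Abound} then gives $I(t)<N(t)$'' does not follow. The paper gives no derivation of its constants, so the $D$ and $P$ dependence of your final condition has to be rechecked against \eqref{Abound}.
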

}
\begin{proof}
    \textcolor{black}{The estimates \eqref{Abound}--\eqref{Mchar} are derived from the predictive nature of the control law \eqref{rp1}--\eqref{rp3} and \eqref{C0}, \eqref{C2} and the positivity of the actuator and plant states can be established via the method of characteristics \cite{LiYu}. We omit the details here. In contrast to \cite{diagne2015state} and \cite{diagne2017compensation}, which establish globally stabilizing, safe softened bang–bang control under a predictor feedback design, the present setting admits only local guarantees due to the presence of source terms.}
\end{proof}
\section{Simulation Results}\label{section 5}

To evaluate the effectiveness of the proposed control strategy, we carry out numerical simulations with the objective of stabilizing the queue state $Q$ at a desired setpoint $Q^* = 0.3$. The system parameters are selected as follows: the output capacity is set to $\mu = 0.8$, the processing time to $P = 0.25$, and the buffer capacity to $Q_{\max} = 1$. The maximum control input is limited by $B_{\max} = 1.2$, and the distribution coefficient is fixed as $\alpha = 0.5$. Additionally, the integration window of length $\tau = 0.2$ in the propagation speed. 

The production line domain spans from $x = 0$ to $x = 2$, where the raw materials enter at the right boundary. The system is initialized with an empty line, i.e., the part density satisfies $\rho_0(x) = 0$ for all $x$, and the buffer is also initialized to be empty over the initial integration window, that is, $Q(s) = 0$ for $s \in [-0.2, 0]$.
To capture loss and recycling effects, the dissipation coefficient is set to $C(x) \equiv 1$ (i.e., spatially uniform friction), and the total rework injection rate is set to $A = 0.1$.

To implement the delay-compensated bang–bang control, we set the slope parameter to exceed its minimum admissible value by a fixed offset: 
$S(Q^*) = S_{\min}(Q^*) + 20$. 

{\color{black}The closed-loop PDE--ODE system is discretized using a first-order upwind finite difference scheme on a uniform spatial grid of $N = 80$ points over $[0, D]$. The time integration employs the forward Euler method with a CFL-constrained time step of $\Delta t \approx 0.0025$\,s. At each time step, the multi-layer predictor integral equations are solved by forward spatial marching from $x = 0$ to $x = D$.}

The simulation results presented in Figure~\ref{odestate} compellingly demonstrate the efficacy of the proposed predictor-feedback controller by contrasting its performance with open-loop and uncompensated scenarios. In the open-loop case, the plant state $Q$ exhibits a very slow convergence towards the set-point. More critically, when the nominal bang-bang controller is applied directly without compensation, the inherent control lag leads to pronounced oscillations in both the plant state and the control input, preventing stabilization. In stark contrast, the introduction of our delay compensator yields a perfect convergence. As shown by the solid red line in Figure~\ref{odestate}, the compensated system achieves smooth and rapid stabilization at the set-point $Q^*$ without overshoot or oscillatory behavior, which means the multi-layer predictor feedback law completely compensate the delay and eliminate the influence of the source term.

\begin{figure}[t]
  \centering
 \hspace{-.4in} \includegraphics[width=1\linewidth]{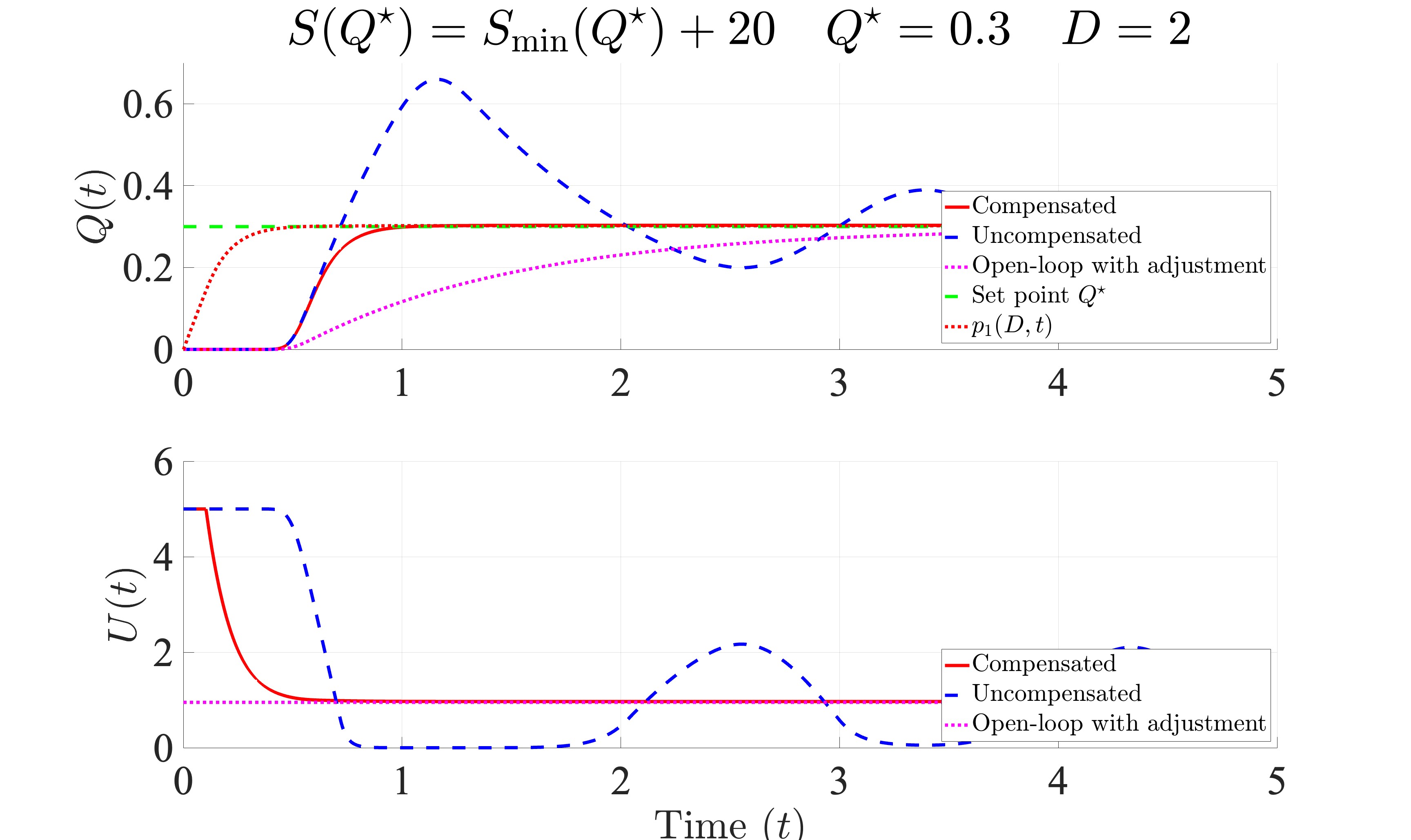}
  \caption{Evolution of ODE state under the compensated control, uncompensated control, open-loop control. }\label{odestate}
\end{figure}

\begin{figure*}[t]
  \centering
\includegraphics[width=0.8\linewidth]{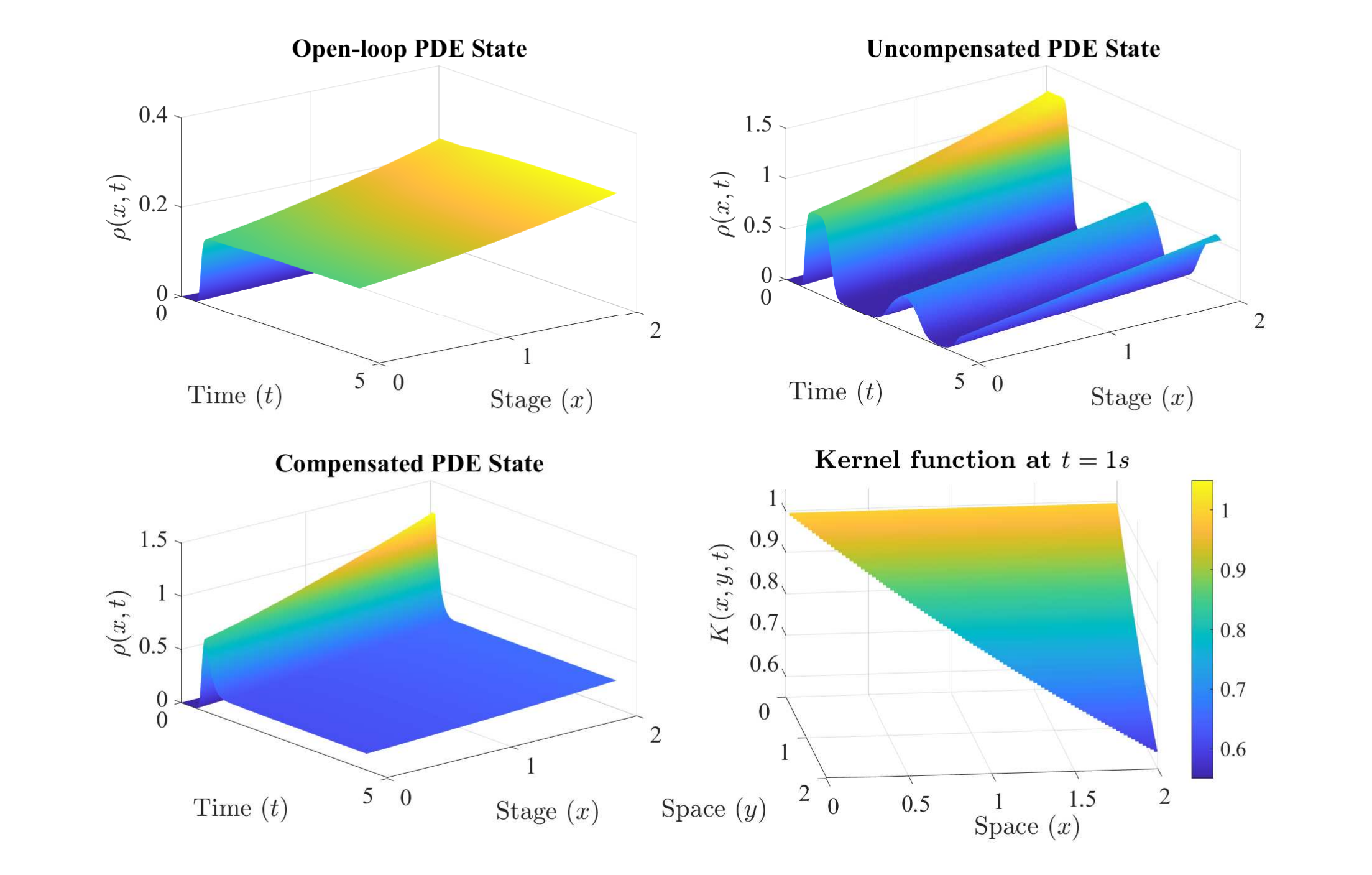}
  \caption{Evolution of PDE state under open-loop, compensated, uncompensated control and snapshot of the gain $K(x,y,t)$ }\label{pdestate}
\end{figure*}

\begin{figure}[t]
  \centering
\includegraphics[width=0.8\linewidth]{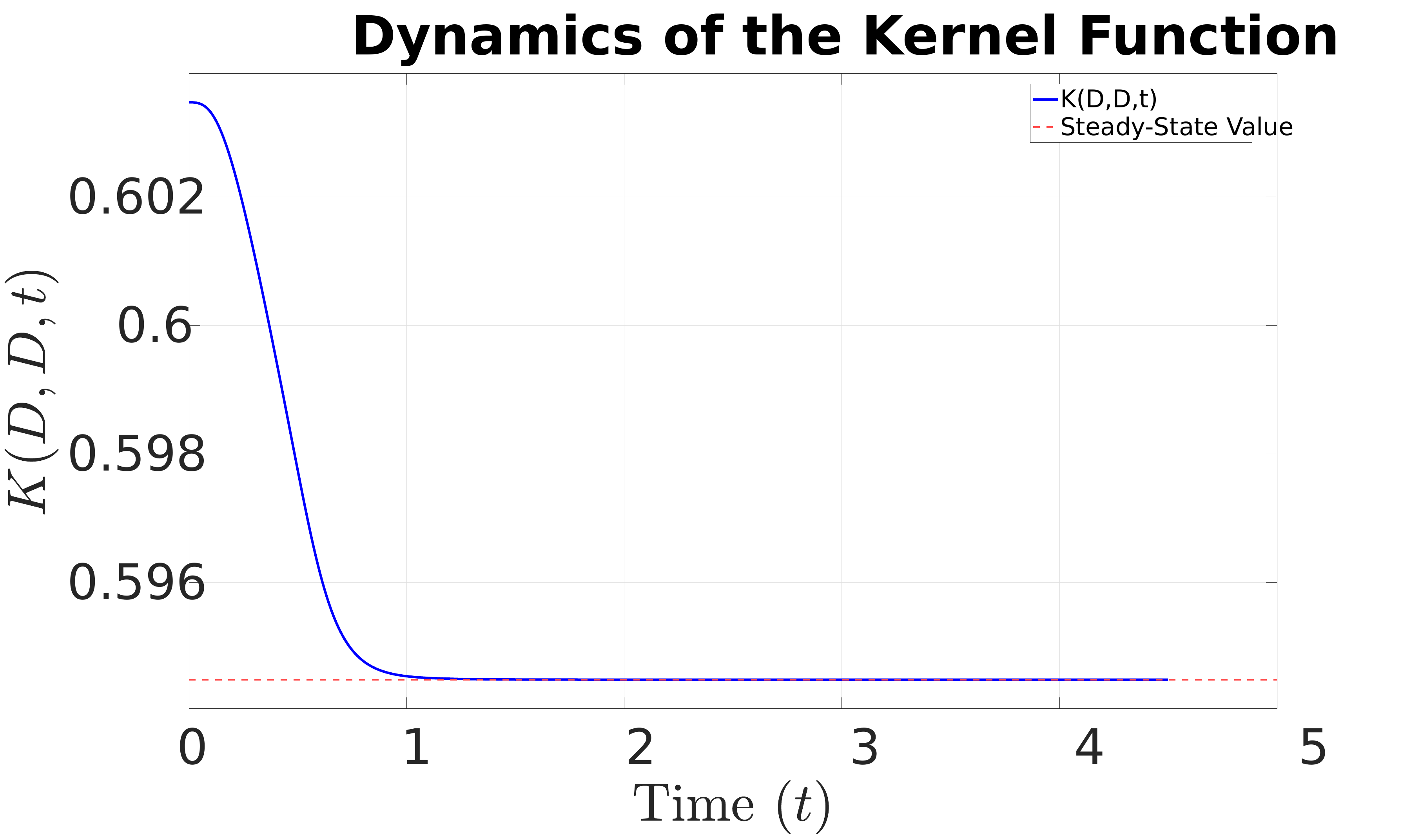}
  \caption{Evolution of  the gain $K(x,y,t)$ }\label{gainfig}
\end{figure}

{\color{black}Figure~\ref{pdestate} depicts the spatiotemporal evolution of the PDE state $\rho(x,t)$ under each scenario. In the open-loop and uncompensated cases, the density profile exhibits persistent transients and oscillatory behavior, respectively. In the compensated case, $\rho(x,t)$ converges smoothly to its steady-state profile. The fourth panel shows a snapshot of the backstepping kernel $K(x,y,t)$, which captures the combined effect of friction and recycling compensation. Figure~\ref{gainfig} shows the time evolution of $K(D,D,t)$, confirming its monotonic convergence to the steady-state value as $Q(t) \to Q^*$.}

\section{Conclusion}  \label{section con}
		In this study, we developed a control framework for coupled non-conservative PDE–ODE systems in which the propagation speed depends on the integral of past ODE states. Our approach introduces a layered predictor structure for nonlinear integro-differential equations with state-dependent input delays, modeled as a nonlinear composite PDE–ODE system. By incorporating flux-based actuation and sensing, the boundary conditions become highly nonlinear, posing challenges beyond those addressed in prior work. The control design relies on nonlinear backstepping transformations, and both global asymptotic stability (in the supremum norm of the state and actuator) and well-posedness are established through the characteristic method and a fixed-point argument, which differs from the classical approach relying on a Lyapunov argument. The proposed design applies to systems with both constant and time-varying transport speeds and retains a separation principle, relying on the design of a nominal control law for the delay-free plant. Future work will extend this framework to other classes of hyperbolic PDEs and to networks of coupled PDE–ODE subsystems.

\appendices
\section{The proofs of the lemmas in Section \ref{sec3}} \label{Appendix A}    

\subsection{Proof of Lemma \ref{lemma 9}}\label{proof 9} 
Define $A(y,t)=\lambda(p_3(y,t)) p_2(y,t)$, differentiating \eqref{MF3} w.r.t $t$, we have 
  \begin{align}\label{bp1t}
    &\partial_t p_1(x,t)=\int_{0}^{x} \frac{f_1(p_1(y,t),A(y,t) )\, \partial_t p_1(y,t)}{\lambda(p_3(y,t))}  dy  \notag \\
    &~~~+ \int_{0}^{x} \frac{f_2(p_1(y,t), A(y,t)  )\,  \partial_t A(y,t)}{\lambda(p_3(y,t))} dy \notag \\ 
    &~~~-\int_{0}^{x} \frac{\nabla \lambda(p_3(y,t)) \cdot \partial_t p_3(y,t)}{\lambda^2(p_3(y,t))}   f(p_1(y,t),A(y,t)) dy \notag \\
    &~~~+f(X(t),\,\lambda(p_3(0,t)\textcolor{black}{)} \, u(0,t)\, ),
  \end{align}
 {\color{black} differentiating \eqref{MF3} w.r.t $x$ and employing the \textbf{Newton-Leibniz formula} gives }
 \begin{align}\label{bp1x}
    &\partial_x p_1(x,t)=\int_{0}^{x} \frac{f_1(p_1(y,t),A(y,t))\,   \partial_y p_1(y,t)}{\lambda(p_3(y,t))}  dy \notag \\
    &~~~~~~+ \int_{0}^{x} \frac{f_2(p_1(y,t),A(y,t))\,  \partial_y A(y,t)}{\lambda(p_3(y,t))} dy \notag \\
    &~~~~~~-\int_{0}^{x} \frac{\nabla \lambda(p_3(y,t))\cdot \, \partial_y p_3 (y,t)}{\lambda^2(p_3(y,t))}  f(p_1(y,t),\,A(y,t)) dy \notag \\
    &~~~~~~+\frac{1}{\lambda(p_3(0,t)\textcolor{black}{)}}f(p_1(0,t),A(0,t)).
  \end{align}
Here, $f_1$ and $f_2$ represent partial derivatives of the vector function $f$ w.r.t. its first and second variable respectively.

Similarly, for simplification, we set:
\begin{align}
H(x,y,t) &= \frac{K(x,x,t)}{K(x,y,t)} = \exp\left(\int_{y}^{x}\frac{c(x-z)}{\lambda(p_{3}(z,t))}dz\right) ,\label{p2t1}\\
J(x,y,t) &= \frac{g(x-y,p_{2}(y,t))}{\lambda(p_{3}(y,t))} \label{p2t2}.
\end{align}

Thus, from \eqref{MF4},
\begin{equation}\label{p2app}
    p_2(x,t) = K(x,x,t)u(x,t) + \int_{0}^{x} H(x,y,t) J(x,y,t) dy.
\end{equation}

Differentiating \eqref{p2app} with respect to $t$ and $x$ involves careful use of the Leibniz integral rule, along with the differentiation of the kernel $K(x,x,t)$ and the auxiliary functions  
$H(x,y,t)$, $J(x,y,t)$  defined in \eqref{MF7} and \eqref{p2t1}, \eqref{p2t2}. After algebraic simplifications, we obtain the final expressions  as

\begin{align}\label{bp2t}
&\partial_t p_2(x,t)= K(x,x,t)  \,\,\partial_t u(x,t)\nonumber \\
&~~~ -\!\! K(x,x,t)u(x,t) \!\! \int_{0}^{x} c(x-z) \frac{\nabla\lambda(p_3(z,t)) \cdot \partial_t p_3(z,t)}{\lambda^2(p_3(z,t))} dz \!\! \!\! \nonumber \\
&~~~- \int_{0}^{x}  \bigg[ H(x,y,t) J(x,y,t)   \notag \\
&~~~~~~~~~~~~~~\times \int_y^x c(x-z) \, \frac{\nabla\lambda(p_3(z,t)) \cdot \partial_t p_3(z,t)}{\lambda^2(p_3(z,t))} dz \bigg] \,dy \nonumber \\
&~~~ + \int_{0}^{x}  \frac{H(x,y,t)}{\lambda(p_3(y,t))}  g_2(x-y,p_2(y,t)) \,  \partial_t p_2(y,t) \,dy \nonumber \\
&~~~-\int_{0}^{x}\!\! H(x,y,t)  \frac{g(x-y,\,p_{2}(y,t))}{\lambda^2(p_3(y,t))} \nabla\lambda(p_3(y,t)) \cdot \partial_t p_3(y,t) \,dy,
\end{align}
and 
\begin{align}\label{bp2x}
&\partial_x p_2(x,t)=K(x,x,t) \, \partial_x u(x,t) + H(x,0,t)J(x,0,t) \nonumber \\ &~~ +K(x,x,t) u(x,t)\nonumber \\
& ~~~~~~\times\!\left( \frac{c(x)}{\lambda(p_{3}(0,t))} -\int_0^x \!\! c(x-z) \frac{\nabla\lambda(p_3(z,t))  \cdot\partial_z p_3(z,t)} {\lambda^2(p_{3}(z,t))} dz \!\! \right)  \nonumber \\
&~~ + \int_{0}^{x} \frac{H(x,y,t)\,g_2(x-y,p_2(y,t))}{\lambda(p_3(y,t))}  \partial_y p_2(y,t)\, dy \nonumber \\
&~~ - \int_{0}^{x} \frac{H(x,y,t)g(x-y,p_2(y,t))\, \nabla\lambda(p_3(y,t))}{\lambda^2(p_3(y,t))}  \cdot\partial_y p_3(y,t) \, dy\notag \\
&~~-\int_{0}^{x}  H(x,y,t) J(x,y,t) \nonumber \\
& ~~~~~~ \times   \int_{y}^{x}\!\!c(x-z) \frac{\nabla \lambda(p_{3}(z,t))}  {\lambda^2(p_{3}(z,t))}   \cdot\partial_z p_3(z,t)\,dz \, dy.
\end{align}
Here, $g_1$ and $g_2$ represent partial derivatives of the scalar function $g$ w.r.t. to its first and second variable respectively.
          \textcolor{black}{
        As $\sigma(x,t) \ge t$ implies that the first integral term in \eqref{MF5} vanishes effectively for $0\le s < t-\tau$, the lower limit of integral can be treated as fixed at $0$. Consequently, differentiating \eqref{MF5} with respect to $t$ and $x$ yields:}
        \begin{align}\label{bp3t}
			&\partial_t p_3(x,t)
	=\gamma(t,\sigma(x,t) ) X(t) \notag \\
			&+\int_{t-\tau}^t \gamma_2(s,\sigma(x,t) )  X(s) ds \,\,\partial_t \sigma(x,t)\notag \\ 
			&+\int_{0}^{x}\gamma_1(\sigma(y,t),\sigma(x,t) )\,\partial_t \sigma(y,t)\frac{p_1(y,t)}{\lambda(p_3(y,t))}  dy \notag \\
			& +\int_{0}^{x}\gamma_2(\sigma(y,t),\sigma(x,t) )\,\partial_t \sigma(x,t) \frac{p_1(y,t)}{\lambda(p_3(y,t))}  dy \notag\\
			&
			+ \int_{0}^{x}\gamma(\sigma(y,t),\sigma(x,t)) \cdot \frac{\partial_t p_1(y,t)}{\lambda(p_3(y,t))} \, dy  \notag \\
			&-\int_{0}^{x} \gamma(\sigma(y,t),\sigma(x,t) )\frac{\nabla \lambda(p_3(y,t))\cdot \partial_t p_3(y,t)}{\lambda^2(p_3(y,t) )}p_1(y,t) dy \notag\\	
		\end{align} 
		and 
	\begin{align}\label{bp3x}
	&\partial_x p_3(x,t) 
	=\gamma(t,\sigma(x,t) ) \frac{p_1(0,t)}{\lambda(p_3(0,t))} \notag \\
	&+\int_{t-\tau}^t \gamma_2(s,\sigma(x,t) )  X(s) ds \,\,\partial_x \sigma(x,t) \notag \\ 
	&+\int_{0}^{x}\gamma_1(\sigma(y,t),\sigma(x,t)) \partial_y \sigma(y,t) \frac{p_1(y,t)}{\lambda(p_3(y,t))}  dy \notag \\
	& +\int_{0}^{x}\gamma_2(\sigma(y,t),\sigma(x,t)) \sigma_{\textcolor{black}{x}}(x,t) \frac{p_1(y,t)}{\lambda(p_3(y,t))}  dy \notag\\
	&
	+ \int_{0}^{x}\gamma(\sigma(y,t),\sigma(x,t)) \frac{\partial_y p_1(y,t)}{\lambda(p_3(y,t))} \,dy \notag \\
	&-\int_{0}^{x} \gamma(\sigma(y,t),\sigma(x,t))  \frac{\nabla \lambda(p_3(y,t))\cdot \partial_y p_3(y,t)}{\lambda^2(p_3(y,t) )}p_1(y,t) dy.
\end{align} 
Differentiating \eqref{MF6} at $t$ and $x$ gives
\begin{align}\label{bp4t}
    &\partial_t \sigma(x,t)=1-\int_{0}^{x} \frac{\nabla \lambda(p_3(y,t)) \cdot \partial_t p_3(y,t)}{\lambda^2(p_3(y,t))} \, dy ,
  \end{align}
  \begin{align}\label{bp4x}
    &\partial_x \sigma(x,t)=\frac{1}{\lambda(p_3(0,t))}\!- \!\! \int_{0}^{x} \frac{\nabla \lambda(p_3(y,t)) \cdot \partial_y p_3(y,t) }{\lambda^2(p_3(y,t))} \, dy .
  \end{align}
  We define the vectors $\mathbf{Z}(x,t)$ and $\mathbf{p}(x,t)$ as follows:
\begin{equation*}
  \mathbf{Z}(x,t) = \left( Z_1(x,t), Z_2(x,t),Z_3(x,t), Z_4(x,t) \right)^{\top}\end{equation*}

\begin{equation*}\mathbf{p}(x,t) = \left(p_1(x,t),p_2(x,t),p_3(x,t),\sigma(x,t) \right)^{\top},  
\end{equation*}
where
\begin{equation}
    \mathbf{Z}(x,t) = \partial_t \mathbf{p}(x,t) - \lambda \left( \int_{t-\tau}^{t} X(s) \, ds \right) \partial_x \mathbf{p}(x,t).
\end{equation}
		Combining \eqref{M3}, \eqref{bp1t}, \eqref{bp2x},  
		and considering \eqref{MF3}--\eqref{MF5} at $x=0$, note that $A(y,t)=\lambda(p_3(y,t)) p_2(y,t)$, we have
\begin{align}\label{Z1}
    &Z_1(x,t)=\int_{0}^{x} \frac{f_1(p_1(y,t), A(y,t) )}{\lambda(p_3(y,t))} \, Z_1(y,t) \,dy   \notag \\
    &~+ \int_{0}^{x} f_2(p_1(y,t), A(y,t) )   \,Z_2(y,t) \,dy  \notag \\ 
    &~+ \int_{0}^{x} \frac{f_2(p_1(y,t), A(y,t)   ) \, \nabla \lambda(p_3(y,t))\, p_2(y,t)}{\lambda(p_3(y,t))} \cdot\,Z_3(y,t) \,dy  \notag \\
    &~-\int_{0}^{x} \frac{\nabla \lambda(p_3(y,t)) \cdot Z_3(y,t)}{\lambda^2(p_3(y,t))}   f(p_1(y,t),A(y,t)) dy. 
  \end{align}

 Via \eqref{bp2t} and \eqref{bp2x}, using \eqref{M3} and note that
\begin{equation*}
    H(x,0,t)J(x,0,t)= \frac{K(x,x,t)g(x,u(0,t))}{\lambda(p_3(0,t))},
\end{equation*} 
we have 
\begin{align}
&Z_2(x,t)= -K(x,x,t) u(x,t) \notag \\
&~~~~~~~~~~~~~\times \int_0^x c(x-y) \frac{\nabla\lambda(p_3(y,t))\cdot  Z_3(y,t) } {\lambda^2(p_{3}(y,t))}\,dy  \nonumber \\
& + \int_{0}^{x} \frac{H(x,y,t)g_2(x-y,p_2(y,t))}{\lambda(p_3(y,t))}\, Z_2(y,t) dy \nonumber \\
& - \int_{0}^{x} \frac{H(x,y,t)g(x-y,p_2(y,t)) \nabla\lambda(p_3(y,t))\cdot Z_3(y,t)}{\lambda^2(p_3(y,t))} \,  dy\notag \\
&-\int_{0}^{x}  H(x,y,t) J(x,y,t) \nonumber \\
&~~~~~~~~~~~~ \times   \int_{y}^{x}\!\! \frac{c(x-\zeta)\,\nabla \lambda(p_{3}(\zeta,t))}{\lambda^2(p_{3}(\zeta,t))}\cdot Z_3(\zeta,t) \,d\zeta \, dy.
\end{align}
	From \eqref{bp3t}--\eqref{bp4x}, we have 		\begin{align}\label{Z3}
		&Z_3(x,t)= \int_{t-\tau}^t \gamma_2(s,\sigma(x,t) ) X(s) ds\, Z_4(x,t) \notag \\ 
&+\int_{0}^{x}\gamma_1(\sigma(y,t),\sigma(x,t) )\,Z_4(y,t)  \frac{p_1(y,t)}{\lambda(p_3(y,t))} dy \notag \\
&+\int_{0}^{x}\gamma_2(\sigma(y,t),\sigma(x,t) )\,Z_4(x,t)  \frac{p_1(y,t)}{\lambda(p_3(y,t))}  dy \notag\\
		&
		+ \int_{0}^{x}\gamma(\sigma(y,t),\sigma(x,t)) \frac{Z_1(y,t)}{\lambda(p_3(y,t))} dy \notag \\
		&-\int_{0}^{x} \gamma(\sigma(y,t),\sigma(x,t)) \frac{\nabla \lambda(p_3(y,t))\cdotp Z_3(y,t)}{\lambda^2(p_3(y,t) )}p_1(y,t) dy ,	
	\end{align} 
    and 
	\begin{align}\label{Z4}
		Z_4(x,t)=-\int_{0}^{x} \frac{\nabla \lambda(p_3(y,t))\cdotp Z_3(y,t)}{\lambda^2(p_3(y,t) )} dy.
	\end{align}
\textcolor{black}{Noticing that $\mathbf{Z}(0,t)=\mathbf{0}$, we deduce from \eqref{Z1}
  to 
\eqref{Z4} that}
$\mathbf{Z}(x,t)\equiv \mathbf{0}$ for all $x\in [0,D]$, which means 
  \begin{align}\label{pis0}
  		\partial_t p_i(x,t)-\lambda \left(\int_{t-\tau}^{t} X(s) \, ds\right)\partial_x p_i(x,t)=0 
  \end{align}
  for $i=1,2,3$ and 
  \begin{equation}\label{s0}
      \partial_t \sigma(x,t)-\lambda\left(\int_{t-\tau}^{t} X(s) \, ds\right)\partial_x \sigma(x,t)=0.
  \end{equation}
 
 From \eqref{MF4} and \eqref{nonlinear backstepping}, we have $w(x,t)=\lambda(p_3(x,t)) p_2(x,t)-\kappa(p_1(x,t))$. Therefore, from  \eqref{pis0}
\begin{equation}\label{wtwxu}
\partial_t w(x,t)=\lambda\left(\int_{t-\tau}^{t}X(s)ds\right)\partial_x w(x,t).
\end{equation}
We can get \eqref{non tar 1} from \eqref{M1}--\eqref{M2}, combining \eqref{MF3}, \eqref{MF5}, \eqref{nonlinear backstepping} at $x=0$. 

 The boundary condition (\ref{non tar 3}) comes from \eqref{M4},     \eqref{MF1}  together with \eqref{nonlinear backstepping} at $x=D$.

\subsection{Proof of Lemma \ref{lemma 10}}
Comparing \eqref{npi1}--\eqref{npi4} and \eqref{MF3}--\eqref{MF6}, following an argument analogous to the proof of       Lemma~\ref{lemma 9},  we can conclude that
 \begin{align}\label{pi01}
  		\partial_t \pi_i(x,t)-\lambda \left(\int_{t-\tau}^{t} X(s) \, ds\right)\partial_x \pi_i(x,t)=0 
  \end{align}
  for $i=1,2,3$ and 
  \begin{equation}\label{pi02}
      \partial_t \overline{\sigma}(x,t)-\lambda\left(\int_{t-\tau}^{t} X(s) \, ds\right)\partial_x \overline{\sigma}(x,t)=0.
  \end{equation}
  From \eqref{kernel L}, we have
 \begin{align} \label{Ltxx}
     & \partial_t L (x,x,t)  =   L(x,x,t) 
      \!\! \int_0^x\!\!\! c(x-z) \!\! \frac{\nabla\lambda(\pi_3(z,t)) \,\, \partial_t \pi_{3}(z,t)} {\lambda^2(\pi_{3}(z,t))} dz , 
 \end{align}
 and 
 \begin{align}\label{Ltxy}
     & \partial_t L(x,y,t)  =   L(x,y,t) 
      \!\! \int_0^y\!\!\! c(x-z) \!\! \frac{\nabla\lambda(\pi_3(z,t)) \,\, \partial_t \pi_{3} (z,t)} {\lambda^2(\pi_{3}(z,t))} dz . 
 \end{align}
In this lemma, for the sake of simplification, we define 
\begin{equation}\label{G}
    G(x,y,t)=\frac{g(x-y,\pi_2(y,t))}{\lambda(\pi_3(y,t))}.
\end{equation}
 Differentiating \eqref{non inverse backstepping} at $t$, from \eqref{kernel L}, \eqref{Ltxx} and  \eqref{Ltxy},  we have 
\begin{align} \label{utinverse}
      &\partial_t u(x,t)=\frac{\partial }{\partial t} \left[ \frac{\pi_2(x,t)}{\lambda(\pi_3(x,t)\textcolor{black}{)}}\right] \notag \\
     &+ L(x,x,t) \frac{\pi_2(x,t)}{\lambda(\pi_3(x,t)\textcolor{black}{)}}
        \notag \\ & ~~~~~~~~~~~~\times   \int_0^x c(x-z) \frac{\nabla\lambda(\pi_3(z,t)) } {\lambda^2(\pi_{3}(z,t))}  \partial_t \pi_{3}(z,t) dz   \notag \\
  &-\int_0^x    L(x,y,t)G(x,y,t) \notag \\ &~~~~~~~~~~~~\times\left(  \int_0^y \!\! \frac{c(x-z)}{\lambda^2(\pi_3(z,t))}\nabla  \lambda(\pi_3 (z,t)) \, \partial_t  \pi_{3}(z,t) \, dz \!\! \right) \,  dy \notag \\
  &- \int_0^y  L(x,y,t)\,   \frac{ g_2(x-y,\pi_2(y,t))}{\lambda(\pi_3(y,t))}  \partial_t \pi_{2}(y,t)  \, dy \notag\\
   &  + \int_0^y  L(x,y,t)\, \frac{ G(x,y,t)  \nabla\lambda(\pi_3(y,t))}{\lambda(\pi_3(y,t))} \partial_t  \pi_{3}(y,t)   \,  dy.
 \end{align}
 
 From \eqref{kernel L}, we can also obtain 
 \begin{align}\label{Lxxx}
     & \partial_x L(x,x,t)  =   L(x,x,t)
     \left( -\frac{c(x)}{\lambda(\pi_{3}(0,t))} \right. \notag \\ &\left. + \int_0^x c(x-z) \frac{\nabla\lambda(\pi_3(z,t)) \, \partial_z \pi_{3}(z,t)} {\lambda^2(\pi_{3}(z,t))} dz \right), 
 \end{align}
 and 
 \begin{align}\label{Lxxy}
      \partial_x L(x,y,t) =  - L(x,y,t)
    \int_{0}^{y}\frac{c'(x-z)}{\lambda(\pi_{3}(z,t))}dz.
 \end{align}
 
Differentiating \eqref{non inverse backstepping} at $x$, from \eqref{kernel L}, \eqref{Lxxx} and \eqref{Lxxy}, we have 
\begin{align} \label{uxinverse}
     & \partial_x u(x,t)= \frac{\partial }{\partial x} \left[ \frac{\pi_2(x,t)}{\lambda(\pi_3(x,t)\textcolor{black}{)}}\right] -L(x,0,t)G(x,0,t) \notag \\
     &~~~+L(x,x,t) \frac{\pi_2(x,t)}{\lambda(\pi_3(x,t)\textcolor{black}{)}}
     \left( -\frac{c(x)}{\lambda(\pi_{3}(0,t))} \right. \notag \\ &\left. ~~~~~~~~~~~~~~~~~~~~+ \int_0^x c(x-z) \frac{\nabla\lambda(\pi_3(z,t)) } {\lambda^2(\pi_{3}(z,t))}  \, \partial_z \pi_{3}(z,t) dz \!\! \right)  \notag \\
  &~~~-\int_0^x    L(x,y,t)G(x,y,t) \left( -\frac{c(x)}{\lambda(\pi_3(0,t))} \right.  \notag \\
  &~~~~~~~~~~~+\left. \int_0^y \!\! \frac{c(x-z)}{\lambda^2(\pi_3(z,t))}\nabla  \lambda(\pi_3 (z,t)) \,  \partial_z \pi_{3}(z,t)\, dz \right) \,  dy \notag \\
  &~~~- \int_0^y  L(x,y,t)\,  \frac{ g_2(x-y,\pi_2(y,t))}{\lambda(\pi_3(y,t))} \,  \partial_y \pi_{2}(y,t) \, dy \notag\\
     &~~~+ \int_0^y  L(x,y,t) \frac{ G(x,y,t)  \nabla\lambda(\pi_3(y,t))}{\lambda(\pi_3(y,t))}\,  \partial_y \pi_{3}(y,t) \, dy.
 \end{align}
 From \eqref{pi01}, \eqref{pi02}, \eqref{utinverse}, \eqref{uxinverse}, we have 
 \begin{align}\label{ulemm10}
 &\partial_t u(x,t)-\lambda\left(\!\!\int_{t-\tau}^{t} X(s) \, ds\!\!\right) u_x(x,t)   \notag\\
&=
L(x,x,t)c(x) \frac{\pi_2(x,t)}{\lambda(\pi_3(x,t)\textcolor{black}{)}}-c(x) \int_0^x    L(x,y,t)G(x,y,t) dy  \notag \\
&~~~~~~~~~~~~+\lambda\left(\int_{t-\tau}^{t} X(s) \, ds\right) L(x,0,t) G(x,0,t). 
 \end{align}
 Substituting \eqref{non inverse backstepping}, \eqref{npi2}, \eqref{npi3}, \eqref{kernel L}, \eqref{G} into \eqref{ulemm10}, we obtain \eqref{M3}. Substituting  $x=0$ into \eqref{non tar 1}, \eqref{non inverse backstepping}, \eqref{npi1}, \eqref{npi3}, we have \eqref{M1}--\eqref{M2}. 
Finally, we verify that the boundary condition \eqref{M4} is satisfied. To do this, we first establish the equivalence 
\begin{equation}\label{pp}
    p_i\equiv \pi_i
\end{equation}
for $i=1,2,3$ and 
\begin{equation}\label{ssi}
    \sigma\equiv \overline{\sigma}.
\end{equation}
Indeed, from \eqref{pis0}, \eqref{s0} and \eqref{pi01}, \eqref{pi02}, they satisfy the same transport equation. Moreover, evaluating $x=0$ into \eqref{MF3}--\eqref{MF6}, \eqref{npi1}--\eqref{npi4} and \eqref{non inverse backstepping}, we can conclude that $p_i(0,t)=\pi_i(0,t)$ and $\sigma(0,t)=\overline{\sigma}(0,t)$. From the uniqueness  of the solutions to the transport equations, we obtain \eqref{pp} and \eqref{ssi}.

After that, by substituting 
$x=D$
 into equation \eqref{non inverse backstepping}   and utilizing equation \eqref{MF1}, \eqref{MF4} at $x=0$, \eqref{MF7},  \eqref{non tar 3}, \eqref{kernel L}, 
 as well as 
 \eqref{pp}--\eqref{ssi}, we can conclude \eqref{M4}.

 \subsection{Proof of Lemma \ref{lemma 11}}
 

We first establish a bound for the predictor state $p_2$. 
\textcolor{black}{Taking $t$ as a parameter, using \eqref{eqlam} and \eqref{MF7}, for any fixed $t \geq 0$, we define
\begin{align}
    K_1 &= \max_{\substack{0 \leq x \leq D}} |K(x,x,t)|, \label{K1} \\
    K_2 &= \max_{\substack{0 \leq x \leq D}} \max_{\substack{0 \leq y \leq x}} \left| \frac{K(x,x,t)}{K(x,y,t)} \right|. \label{K2}
\end{align}
 }
From \eqref{MF4} and using \textcolor{black}{\eqref{eqlam}}, \eqref{K1}, \eqref{K2} we derive 
\begin{align}\label{p2ie}
   &|p_2(x,t)| \leq K_1\,|u(x,t)| + \int_0^x \frac{K_2}{\epsilon} \,|g(x-y,\,p_2(y,t))| \, dy. 
\end{align}

Applying Assumption \ref{assump3} to \eqref{p2ie} yields:
\begin{align}\label{p2158}
   |p_2(x,t)|
           &  \leq  K_1\,|u(x,t)|   + \frac{K_2\, L_{g}}{\epsilon} \int_0^x |p_2(y,t)| \, dy. 
\end{align}
 By Gronwall's inequality,  an upper bound for $\sup_{x \in [0,D]} |p_2(x,t)|$ is given by:
\begin{align}\label{p2norm}
   &\sup_{x \in [0,D]} |p_2(x,t)|  \leq   K_1\!\! \sup_{x \in [0,D]} |u(x,t)|   e^{\frac{K_2 L_{g} D}{\epsilon}}  .
\end{align}

After the derivation of the uniform estimate of $p_2$, we move to the deduction of the bound of $p_1$.

	Taking the derivative of \textcolor{black}{\eqref{MF3}} w.r.t. $x$, we get
	\begin{equation}
		\partial_x p_1(x,t)=\frac{f(p_1(x,t),\, \textcolor{black}{\lambda(p_3(x,t))} \, p_2(x,t))}{\lambda(p_3(x,t))}  \label{lemma4 1},
	\end{equation}
	with the boundary condition
	\begin{equation}
		p_1(0,t)=X(t). \label{lemma4 2}
	\end{equation}
	From Assumption \ref{Assumption 1}, 
	noticing \textcolor{black}{\eqref{eqlam}} and (\ref{equation 8}), we have
	\begin{align*}
		&\frac{\partial \Theta(p_1(x,t))}{\partial p_1}  \frac{f(p_1(x,t),\, \textcolor{black}{\lambda(p_3(x,t))} \, p_2(x,t))}{\lambda(p_3(x,t))}  \notag \\
		&\leq \frac{1}{\lambda(p_3(x,t) )} \bigg(\Theta(p_1(x,t))+\mathcal{G}_3(\textcolor{black}{\lambda(p_3(x,t))} |p_2(x,t)|)\bigg).
	\end{align*}
	\textcolor{black}{From the property of $\mathcal{K}_{\infty}$ function $\mathcal{G}_3$ and \eqref{eqlam}, there exists constant $\lambda_1$}, which together with (\ref{lemma4 1}) gives
	\begin{equation}
		\frac{\partial \Theta(p_1(x,t))}{\partial x} \leq \frac{1}{\epsilon} \Theta(p_1(x,t))+\frac{\textcolor{black}{\lambda_1}}{\epsilon}\mathcal{G}_3(|p_2(x,t)|) . \label{lemma4 4}
	\end{equation}
	Thus,
	\begin{equation}
		\Theta(p_1(x,t))\leq e^{x/\epsilon}\Theta(p_1(0,t))+\frac{\textcolor{black}{\lambda_1}}{\epsilon}\int_{0}^{x}e^{\frac{x-y}{\epsilon}}\mathcal{G}_3|p_2(y,t)|)dy.
	\end{equation}
	We derive from (\ref{lemma4 2}) that
	\begin{align}
    \Theta(p_1(x,t))&\leq e^{D/\epsilon}\Theta(X(t)) \notag \\
    &+\textcolor{black}{\lambda_1} \left(e^{D/\epsilon}-1\right) \mathcal{G}_3\left(\sup_{x \in [0 ,D]}|p_2(x,t)|\right). 
	\end{align}

	Using (\ref{equation 7}),  the following inequality holds
	\begin{align}\label{p1norm}
		\sup_{x \in [0 ,D]}&|p_1(x,t)|\leq   \mathcal{G}^{-1}_1 \Bigg(e^{D/\epsilon} \mathcal{G}_2(|X(t)|)\notag \\
		&+\textcolor{black}{\lambda_1} \left(e^{D/\epsilon}-1\right)\mathcal{G}_3\left(\sup_{x \in [0 ,D]}|p_2(x,t)| \right)\Bigg)
	\end{align}
	for all $x\in [0,D]$. 
    
    The continuity assumption on $\kappa$ in Assumption \ref{Assumption 2} implies the existence of a class $\mathcal{K}_{\infty}$ function $\mathcal{\rho}$ such that for all $\xi\in \mathbb{R}^n$
	\begin{equation}\label{ieq rho}
		|\kappa(\xi)|\leq \mathcal{\rho}(|\xi|).
	\end{equation}
    
	From \eqref{eqlam}, \eqref{K1}, \eqref{K2}, Assumption~\ref{assump3}, backstepping transformation \eqref{nonlinear backstepping} and  \eqref{ieq rho}, we have 
	\begin{align}\label{wunorm}
	&	\sup_{x \in [0 ,D]}|w(x,t)|\leq  \frac{K_1}{\epsilon}\,\sup_{x \in [0 ,D]}|u(x,t)| + \rho\left(\sup_{x \in [0 ,D]}|p_1(x,t)|\right) \notag \\
        &~~~~~~~~~~~~~+\frac{\overline{\lambda} \, K_2}{\epsilon} \left( L_g\,  \sup_{x \in [0 ,D]}|p_2(x,t)| \,\right).
	\end{align}
    Substituting \eqref{p2norm}, \eqref{p1norm} into \eqref{wunorm}, from the property of $\mathcal{K}_\infty$ function, we can deduce that there exists a $\mathcal{K}_\infty $ function {$\mathcal{K}_3$}, thereby establishing Lemma~\ref{lemma 11}.

  \subsection{Proof of Lemma \ref{lemma 12}}
  The proof consists of two main steps. First, we establish a bound for the predictor state $\sup_{x\in [0,D]}|\pi_1(x,t)|$. Second, we use this bound to derive the final estimate for $\sup_{x\in [0,D]}|u(x,t)|$. In this section, we treat the variable $t$ as a parameter.

Differentiating (\ref{npi1}) with respect to $x$ and substituting $(\ref{npi2})$ yields the dynamics of $\pi_1$ for $x \in [0,D]$:
\begin{align}\label{lemma13_1}
 \partial_x \pi_{1} = \frac{f(\pi_1,  \kappa(\pi_1) + w) )} {\lambda(\pi_3)}, \quad \pi_1(0,t)=X(t).
\end{align}
 We proceed by using the forward completeness property from Assumption~\ref{Assumption 1}.  Let $V(x) := \Theta(\pi_1(x,t))$, where $\Theta$ is the smooth positive definite function from $(\ref{equation 7})$--$(\ref{equation 8})$.
Letting $\omega_{\rm arg} = \kappa(\pi_1) + w$ and differentiating $V$ with respect to $x$ along the solution of $(\ref{lemma13_1})$ gives
\begin{align*}
\frac{d V}{d x} &= \frac{\partial \Theta(\pi_1)}{\partial \pi_1} \partial_x \pi_1 = \frac{1}{\lambda(\pi_3)} \frac{\partial \Theta(\pi_1)}{\partial \pi_1} f(\pi_1, \omega_{\rm arg}) \\
&\leq \frac{1}{\lambda(\pi_3)} \left( \Theta(\pi_1) + \mathcal{G}_3(|\omega_{\rm arg}|) \right).
\end{align*}
Using the bounds from \textcolor{black}{$(\ref{eqlam})$} and $(\ref{ieq rho})$ yields
\begin{align*}
|\omega_{\rm arg}| \leq  \rho(|\pi_1|) + \sup_{x\in[0,D]}|w(x,t)| .
\end{align*}
Substituting this into the inequality for $V$ and using \textcolor{black}{$(\ref{eqlam})$}, we obtain
\begin{align}
\frac{d V}{d x} \leq \frac{1}{\epsilon} \left( V + \mathcal{G}_3\left(\rho(|\pi_1|) +  \sup_{x\in[0,D]}|w(x,t)|\right) \right).
\end{align}
From $(\ref{equation 7})$, we have $|\pi_1| \leq \mathcal{G}_1^{-1}(V)$. Let $\tilde{\rho} = \rho \circ \mathcal{G}_1^{-1}$, which is a class $\mathcal{K}_\infty$ function. Using the property $\mathcal{G}_3(a+b)\leq \mathcal{G}_4(a) + \mathcal{G}_4(b)$ {(e.g. $\mathcal{G}_4(s)=\mathcal{G}_3(2s)$, we have $\mathcal{G}_3(a+b)\leq \mathcal{G}_3(2 \max\{a,b\}) \leq \mathcal{G}_3(2 a)+\mathcal{G}_3(2 b)$)} for some class $\mathcal{K}_\infty$ function $\mathcal{G}_4$, we get
\begin{align}\label{lemma13_3}
\frac{d V}{d x} \leq W(V) + C,
\end{align}
where $W(V) := \frac{1}{\epsilon}[V + \mathcal{G}_4(\tilde{\rho}(V))]$ is a class $\mathcal{K}_\infty$ function, and $C :=\frac{1}{\epsilon}\, \mathcal{G}_4(\sup_{x\in[0,D]}|w(x,t)|)$ is a non-negative constant, since we consider $t$ as
a parameter.

By the classical comparison principle, $V$ is bounded by the solution $z$ of the ODE 
\begin{equation}\label{Z(X)}
    \dot{z}(x) = W(z(x)) + C,
\end{equation}
with $z(0)=V(0)=\Theta(X(t))$. There exists a  solution $z$ on $[0,D]$ denoted by $\beta(z(0), C, x)$, where $\beta(z_0, C, D)\geq \beta(z_0, C, x)$ for all $x\in [0,D]$ since the non-negative property of the right side of \eqref{Z(X)}. 

From \eqref{Z(X)}, we can deduce that  $\mathcal{G}_5(x)=\beta(x, x, D)$ is a class $\mathcal{K}_{\infty}$ function. Specifically, $\mathcal{G}_5(0)=\beta(0,0,D)=0$ since $z\equiv 0$ is the solution for $z_0=0$, $C=0$; moreover, since $\beta$ is strictly increasing in $z_0$ and $C$ respectively from \eqref{Z(X)}, $\mathcal{G}_5$ is strictly increasing.

From \eqref{equation 7}, since $z(0) = \Theta(X(t))$ and $C$ is a class $\mathcal{K}_{\infty}$ function of $\sup_{x\in[0,D]} |w(x,t)|$, $\mathcal{G}_5$ is a composition of class $\mathcal{K}_\infty$ functions of $|X(t)|$ and $\sup_{x\in[0,D]} |w(x,t)|$. This implies the existence of a class $\mathcal{K}_\infty$ function $\mathcal{G}_6$ such that
\begin{align}\label{iepi1}
\sup_{x\in [0,D]}\big|\pi_1(x,t)\big|\leq \mathcal{G}_6 \bigg(\big|X(t)\big|+\sup_{x\in [0,D]}\big|w(x,t)\big|\bigg).
\end{align}
Since $\pi_1$ is bounded, we now estimate \textcolor{black}{$u$}.
From \eqref{eqlam}, \eqref{kernel L}, there exists a constant $\overline{L}$ such that 
\begin{equation}\label{overlineL}
    L(x,y,t) \leq \overline{L}
\end{equation}
for all $0\leq y\leq x \leq D$.
From \eqref{overlineL}, the inverse transformation \eqref{non inverse backstepping} and Assumption~\ref{assump3}, we have
\begin{align}\label{lemma13_5}
&\sup_{x\in [0,D]}\big|u(x,t)\big| \leq \frac{\overline{L}}{\epsilon}\left( \sup_{x\in [0,D]}\big|w(x,t)\big| + \rho(\sup_{x\in [0,D]}\big|\pi_1(x,t)\big|) \right) \notag \\
&~~~~~~ + \frac{D \overline{L} L_g}{\epsilon} \sup_{x \in [0,D]}|\pi_2(x,t)|.
\end{align}
 From \eqref{npi2}, 
 $\sup_{x\in[0,D]}|\pi_2(x,t)| \leq (\sup_{x\in[0,D]}|w(x,t)| + \rho(\sup_{x\in[0,D]}|\pi_1(x,t)|))/\epsilon$. Substituting this and the bound \eqref{iepi1} into $(\ref{lemma13_5})$ confirms the existence of a class $\mathcal{K}_\infty$ function $\mathcal{K}_4$ that satisfies $(\ref{norm 6})$.

\section{Well-posedness proof of Theorem~\ref{thm3}} \label{Appendix B}
\subsection{Well-posedness proof of the initial condition of \eqref{non tar 1}--\eqref{non tar 3}}

Given the initial condition $h$ and $ u_0$ in Theorem \ref{thm3}, we begin by rigorously showing that the initial condition $w_0$ for the target state is well-defined. This is achieved by proving that the initial predictor states  \eqref{MF3}--\eqref{MF7}, which defines the backstepping transformation \eqref{nonlinear backstepping} at $t=0$,  admit a unique solution $(p_{1,0},\,p_{2,0},\,p_{3,0},\,\sigma_0)(x)$.

Since the switching nature of the function $\gamma$ in \eqref{MF5} dependent on the state $\sigma_0$, we establish the proof via the method of steps.

For a sufficiently small initial interval $[0, \delta]$ where $\sigma_0< \tau$, from \eqref{W}, \eqref{MF5}, we have
\begin{equation}
    p_{3,0}(x) = \int_{\sigma_0(x)-\tau}^0  X(s)\, ds 
	 + \int_0^x  \frac{p_{1,0}(y)}{\lambda(p_{3,0}(y))}\, dy,  
\end{equation}

The initial predictors, namely  \eqref{MF3}--\eqref{MF7} at $t=0$, reduce to a standard set of coupled Volterra integral equations. On this interval, from a classical result based on the Contraction Mapping Principle for such equations, relying on the Assumption~\ref{Assumption 1}, the assumed regularity of $f$, $g$, $\lambda$, $c$ and the initial data $h$, $ u_0$, we can deduce  the existence of a unique $C^1$ solution $(p_{1,0},\,p_{2,0},\,p_{3,0},\,\sigma_0)$. 

 This local solution can be uniquely extended over the entire compact interval $[0,D]$ by a step-by-step continuation argument. As the solution is extended from a point $x_k$ to $x_{k+1}$, the unique $C^1$ solution on $[0, x_k]$ serves as a well-defined history for the next interval. During this process, as long as $\sigma_0 \leq \tau$, the system of Volterra equations maintains its initial structure.

Should the extension reach a point where $0 \leq x \leq D$ and $\sigma_0 > \tau$, the structure of the equation for $p_{3,0}$ changes. Specifically, the integral involving the ODE's history vanishes. The equation for $p_{3,0}$ then takes the form of a functional integral equation:
\begin{align}\label{eq:p30_advanced}
    p_{3,0}(x) = \int_{\sigma_0^{-1}(\sigma_0(x)-\tau)}^{x} \frac{p_{1,0}(y)}{\lambda(p_{3,0}(y))} dy.
\end{align}
This transition in the structure of the equation for $p_{3,0}$ is rigorously handled by the continuation argument. The strict positivity of the propagation speed \eqref{eqlam} ensures that $\sigma_0$ is a strictly increasing, continuously differentiable function, and thus locally invertible. Consequently, the term $\sigma_0^{-1}(\sigma_0(x) - \tau)$ is well-defined at each step, as its evaluation depends only on the solution over the preceding interval $[0,x)$, which has already been uniquely constructed. The problem can therefore be re-posed at each stage as a well-defined local system of Volterra equations, which is known to admit a unique $C^1$ solution. This constructive procedure, repeated over a finite number of steps, covers the entire compact domain $[0,D]$ and guarantees that a unique $C^1([0,D])$ solution for the initial predictor states $(p_{1,0},\, p_{2,0},\, p_{3,0},\, \sigma_0)$ exists.

The existence of this unique $C^1([0,D])$ solution for the initial predictor states directly implies that the initial condition for the target system, $w_0$, as defined by the backstepping transformation \eqref{nonlinear backstepping} at $t=0$, is itself well-defined and Lipschitz continuous on $[0,D]$. Thus there exists a Lipschitz constant $L$ such that for all $x_1,x_2\in [0,D]$
\begin{equation}
		|w_0(x_1)-w_0(x_2)|\leq L|x_1-x_2| .
	\end{equation}
\subsection{Well-posedness of the Closed-Loop System}

With a well-defined initial condition $w_0$, we prove the well-posedness of the target system \eqref{non tar 1}--\eqref{non tar 3}. The proof hinges on a fixed-point argument detailed in our prior conference paper~\cite{Li2025CPDE}, where an identical target system was analyzed.

The core of the analysis in~\cite{Li2025CPDE} is to first establish the existence of a unique solution pair $(X, \xi) \in C^1 \times C^1$, where $\xi$ is the integrated propagation speed defined as:
\begin{equation} \label{psi}
    \xi(t) := \int_{0}^{t} \lambda\left(\int_{\sigma-\tau}^{\sigma}X(\eta)d\eta\right)d\sigma.
\end{equation}
Once the unique solution for $\xi$ is secured, the solution for the distributed state $w$ is constructed via the method of characteristics. As shown in~\cite[eq.~(46)]{Li2025CPDE}, this solution is given by $w(x,t) = w_0(x+\xi(t))$, for $x+\xi(t) \in [0,D]$.

To connect this expression with the notation used in equation \eqref{wsolution}, we recall that the characteristic curve $\zeta(s;x,t)$ is defined by \eqref{cline}, with its explicit solution given by:
\begin{equation} \label{xi_zeta}
    \zeta(s;\,x,t) = x + \xi(t) - \xi(s).
\end{equation}
By setting $s=0$ in \eqref{xi_zeta} and noting that $\xi(0)=0$ by definition, we establish the identity $x+\xi(t) = \zeta(0;x,t)$. 

Since $w_0$ is Lipschitz and $\xi$ is $C^1$, it follows that the solution $w$ is locally Lipschitz continuous. With this solution established, 
the equations \eqref{npi1}--\eqref{npi4} for $(\pi_1,\pi_2,\pi_3, \overline{\sigma})$ are globally well-defined.
Indeed, noticing \eqref{npi1}, we can consider $(\pi_1,\pi_3, \overline{\sigma})$ firstly. Following  the proof  in Lemma \ref{lemma 10}, we obtain 

\begin{equation}\label{defpi} \boldsymbol{\pi}(x,t) = \left( \pi_1(x,t),\, \pi_3(x,t),\, \overline{\sigma}(x,t) \right)^\top, \end{equation} 
which satisfies \begin{equation}\label{eqpi} \partial_t \boldsymbol{\pi}(x,t) = \lambda \left( \int_{t-\tau}^{t} X(s)  ds \right) \partial_x \boldsymbol{\pi}(x,t), \end{equation} 
with the boundary condition \begin{equation}\label{bdpi} \boldsymbol{\pi}(0,t) = \left( X(t), \,  \int_{t-\tau}^{t} X(s)  ds,\, t \right)^\top. \end{equation} 
Using the method of characteristics, the solution to the system \eqref{defpi}--\eqref{bdpi} can be expressed as 
$\overline{\sigma}(x,t)=\xi^{-1}(x + \xi(t))$ and $\pi_1(x,t)=X(\overline{\sigma}(x,t))$, $\pi_3(x,t)=\int_{\overline{\sigma}(x,t) - \tau}^{\overline{\sigma}(x,t)} X(s) \, ds$.
		 The regularity of $\boldsymbol{\pi}$ is determined by the regularity of the functions that constitute its boundary data at $x=0$, namely $X$ and $\xi$. Given that $(X, \xi) \in (C^1([0,\infty)))^2$,  the components $\pi_1$, $\pi_3$, and $\overline{\sigma}$ are continuously differentiable. 
         
        After that, from \eqref{npi2}, since $w$ is the locally Lipschitz continuous solution to the target system, $\pi_2$ is obtained and also locally Lipschitz continuous.

        This set of regularities is sufficient for the well-posedness of the inverse backstepping transformation (\ref{non inverse backstepping}). Specifically, the expression for \textcolor{black}{$u$} involves $\pi_2$ only within the function $g(\cdot, \pi_2)$, which is then integrated. As $g$ is assumed to be Lipschitz continuous with respect to its second argument, the local Lipschitz continuity of $\pi_2$ ensures that the integral term is well-defined and that the resulting control input \textcolor{black}{$u$} is locally Lipschitz on $[0,D] \times [0,\infty)$.
		
		Finally, from \eqref{pp}--\eqref{ssi}, we obtain the well-posedness  of the forward predictor states $(p_1, p_2, p_3, \sigma)$. 
		
		Above all, we have proved the well-posedness of the closed-loop system (\ref{M1})--(\ref{M4}) with the controller (\ref{MF1})--(\ref{MF7}).


\section*{Acknowledgments}
The work of Peipei Shang and Li Tong is	supported by the National Natural Science Foundation of China (No. 12171368). The work of Mamadou Diagne was funded by the NSF CAREER Award CMMI-2302030 and  the NSF grant CMMI-2222250.

\bibliographystyle{abbrv}
\bibliography{ref}

\end{document}